\documentclass[a4paper]{amsart}

\usepackage[T1]{fontenc}
\usepackage{amssymb} 
\usepackage[pdfencoding=auto]{hyperref}
\usepackage{ mathrsfs }
\allowdisplaybreaks

\newtheorem{theocount}{theocount}
\newtheorem{Thm}[theocount]{Theorem}
\newtheorem{Lemma}[theocount]{Lemma}
\newtheorem{Propo}[theocount]{Proposition}
\newtheorem{Cor}[theocount]{Corollary}
\numberwithin{theocount}{section} 
\newtheorem{Remar}[theocount]{Remark}

\newcommand{\lpi}{\langle}
\newcommand{\rpi}{\rangle}
\newcommand{\bC}{\mathbb{C}}
\newcommand{\bR}{\mathbb{R}}

\newcommand{\bZ}{\mathbb{Z}}
\newcommand{\oz}{{\bar{z}}}
\newcommand{\oze}{{\bar{\zeta}}}
\newcommand{\triv}{\mathrm{triv}}
\DeclareMathOperator{\Cent}{Cent}
\DeclareMathOperator{\Proj}{Proj}
\DeclareMathOperator{\sign}{sign}
\DeclareMathOperator{\End}{End}

\begin{document}
	
	\title{Deformations of unitary Howe dual pairs}
	\author{Dan Ciubotaru}
	\address{Mathematical Institute, University of Oxford, Oxford OX2 6GG, UK}
	\email{dan.ciubotaru@maths.ox.ac.uk}
	\author{Hendrik De Bie}
	\address{Department of Electronics and Information Systems, Ghent University, Krijgslaan 281, 9000 Gent, Belgium}
	\email{Hendrik.DeBie@UGent.be}
	\author{Marcelo De Martino}
	\address{Mathematical Institute, University of Oxford, Oxford OX2 6GG, UK}
	\curraddr{Department of Electronics and Information Systems, Ghent University, Krijgslaan 281, 9000 Gent, Belgium}
	\email{Marcelo.GoncalvesDeMartino@UGent.be}
	\author{Roy Oste}
	\address{Department of Applied Mathematics, Computer Science and Statistics, Ghent University, Krijgslaan 281-S9, 9000 Gent, Belgium}
	\email{Roy.Oste@UGent.be}
	\subjclass[2010]{16S80, 
		16W10, 
		16W22, 
		17B10
	}
	\keywords{}
	
	\begin{abstract}
		We study deformations of the Howe dual pairs $(\mathrm{U}(n),\mathfrak{u}(1,1))$ and $(\mathrm{U}(n),\mathfrak{u}(2|1))$ to the context of a rational Cherednik algebra $H_{1,c}(G,E)$ associated with a real reflection group $G$ acting on a real vector space $E$ of even dimension. 
		For each pair, we show that the Lie (super)algebra structure of one partner is preserved under the deformation, which leads to a multiplicity-free decomposition of the standard module or its tensor product with a spinor space. 
		For the case where $E$ is two-dimensional and $G$ is a dihedral group, we provide complete descriptions for the deformed pair and the relevant joint-decomposition.
	\end{abstract}
	
	\maketitle

	\section{Introduction}
	
	The topic of Howe dual pairs has been a very fruitful area of research with origin in invariant theory \cite{Ho1} and that has found several applications to automorphic forms \cite{G, Ho2} and the representation theories of real, complex and $p$-adic reductive groups \cite{Ho2, Ho3, Ku, MVW, W}. In particular, it has been an important tool in the study of unitary representations of reductive groups in such pairs \cite{ABPTV, Li}. 
	
	In broad terms, Howe duality relates the representations of a pair $(\mathcal{G},\mathcal{G}')$ of mutually centralising subgroups of (the double cover of) a symplectic group. 
	For the purpose of the present work, we are concerned with the 
	operator commutant version of Howe duality, see also~\cite[Section 2.3]{Ho4}. Hereto, consider the case of a classical complex Lie group  $\mathcal{G}$ and let $\mathfrak{g}'=\textup{Lie}(\mathcal{G}')$ be the subalgebra of the Weyl algebra that generates the algebra of $\mathcal{G}$-invariants. 
	The Weyl algebra acts on the space of polynomial functions on a complex vector space given as a finite direct sum of copies of the fundamental representation of $\mathcal{G}$ or its dual (or more generally, the appropriate Weyl-Clifford algebra and the space of polynomial-spinors, as a combination of symmetric and skew-symmetric variables). 
	The relationship between the representation theory of each member of the pair $(\mathcal{G},\mathfrak{g}')$ is manifest in terms of the multiplicity-free joint-decomposition of the relevant space of polynomials (or polynomial-spinors).
	
	In the present paper, we are interested in studying the deformations of Howe dual pairs by replacing the relevant Weyl algebra and its action on the polynomial module via partial differential operators by the action via Dunkl operators inside a rational Cherednik algebra (in the $t=1$ case) $H_c(G,E)$. 
	The latter algebra is associated with the pair $(G,E)$, where $G$ is a real reflection group acting on the Euclidean space $E 
	$, and to $c$, a $G$-conjugation invariant parameter map on the set of reflections of $G$. 
	Through its action on $E$, the group $G$ can be identified with a finite subgroup of  a classical Lie group  $\mathcal{G}$. 
	The algebras $H_c(G,E)$ arise as deformations of the algebra $\mathcal{W}(E)\rtimes G$, where $\mathcal{W}(E)$ is the Weyl algebra on $E$, in the sense that when $c=0$ we obtain $H_0(G,E) = \mathcal{W}(E)\rtimes G$. 
	
	In general, among the challenges faced when considering a Howe dual pair $(\mathcal{G},\mathfrak{g}')$ in the rational Cherednik context is the fact that the space of Dunkl-operators is not invariant under the action of the classical group $\mathcal{G}$, but rather it is invariant under the finite subgroup $G$ of $\mathcal{G}$ that is part of the data defining $H_c(G,E)$. 
	The natural replacement for the dual partner $\mathcal{G}$ in this context is the algebra that centralizes the Cherednik version of the algebra $\mathfrak{g}'$. 
	Given a classical dual pair $(\mathcal{G},\mathfrak{g}')$, let $\mathcal{A}$ denote the algebra obtained by replacing the relevant Weyl algebra by a rational Cherednik algebra $H_c(G,E)$, i.e., $\mathcal{A}= H_c(G,E)$ or  $\mathcal{A} = H_c(G,E) \otimes \mathcal{C}$ where $\mathcal{C}$ is an appropriate Clifford algebra.   
	The main steps to be considered in the rational Cherednik context are:
	\begin{enumerate}
		\item Find a family $\{\mathfrak{g}'_c\}$ of realisations of the algebra $\mathfrak{g}'$
		as subalgebras of $\mathcal{A} = \mathcal{A}_c$, varying with the parameters $c$.
		\item Determine the subalgebra $\Cent_\mathcal{A}(\mathfrak{g}'_c)$ of $\mathcal{A}$ that centralizes $\mathfrak{g}'_c$.
		\item Describe the joint-decomposition $(\Cent_\mathcal{A}(\mathfrak{g}'_c),\mathfrak{g}'_c)$ of the relevant space of polynomials or polynomial-spinors, for suitable choices of the parameters.
	\end{enumerate}
	
	A satisfactory answer to (1) is not possible for any given Howe dual pair $(\mathcal{G},\mathfrak{g}')$, but examples of favorable situations were considered before in \cite{BSO, CDM, DOV, DOV2, OSS}. 
	The present work is concerned with more examples of such dual pairs. 
	Specifically, from the general scheme for Howe dual pairs over the complex numbers, introduced and described in \cite{Ho1}, one knows the existence of the pairs $(\mathrm{GL}(n,\bC),\mathfrak{gl}(2,\bC))$ and $(\mathrm{GL}(n,\bC),\mathfrak{gl}(2|1,\bC))$.
	We refer also to \cite[Section 5.2]{CW} for a more explicit account on the second pair in the context of Lie superalgebras.
	
	Versions for the theory of dual pairs  for general local fields (outside characteristic $2$) other than the complex numbers were explored in \cite{Ho2,Ho3}. We find the real-form $(\mathrm{U}(n),\mathfrak{u}(1,1))$ of the pair $(\mathrm{GL}(n,\bC),\mathfrak{gl}(2,\bC))$ in the family of dual pairs listed in \cite[(5.2)(iii)]{Ho3}.  Note that a skew duality  for the same pair also appears in the context of Hodge decomposition theory, see~\cite[Example 4(b)]{Ho1}. 
	In the context of a Weyl-Clifford algebra, the real form $(\mathrm{U}(n),\mathfrak{u}(2|1))$ of the dual pair $(\mathrm{GL}(n,\bC),\mathfrak{gl}(2|1,\bC))$  was described in detail in \cite{BDES}, together with the decomposition of the space of polynomial-spinors for the joint action of this pair. For real forms of Lie superalgebras we refer to~\cite[Section 5.3]{Ka}, and \cite{Pa}.

	In the present work, we study the deformations of (the symmetric variables of) the pairs $(\mathrm{U}(n),\mathfrak{u}(1,1))$ and
	$(\mathrm{U}(n),\mathfrak{u}(2|1))$ to a rational Cherednik context. 
	More specifically, for these pairs, we show  in Theorems \ref{u11} and \ref{u21}, below, that (1) is fulfilled. We give partial answers to the desiderata
	(2) and (3) for general reflection groups $G$ in Sections~\ref{Section:scalar} and \ref{Section:Clifford}. 
	One of the main obstacles in finding the concrete centralizers and decompositions in (2) and (3) is the explicit calculation of the eigenvalues of the central element of $\mathfrak{u}(1,1)$; this turns out to be a difficult problem in general. 
	However, in the case when $G$ is a dihedral group acting on a two-dimensional Euclidean space, we give complete answers in Sections~\ref{Section:dihedral} and \ref{Section:CliffordDihedral}.
	
	Another way to view the pair $(\mathrm{U}(n),\mathfrak{u}(1,1))$ is as part of a seesaw dual reductive pair~\cite{Ku2} with a real form of the pair $(\mathrm{O}(2n,\bC),\mathfrak{sl}(2,\bC))$.
	In the differential operator realisation, when acting on the space of polynomials, the lowest weight vectors for the $\mathfrak{sl}(2)$-action are the (spherical) harmonics. 
	For a fixed homogeneous degree, the harmonics form an irreducible representation for the orthogonal group. 
	In the non-deformed case, the pair $(\mathrm{U}(n),\mathfrak{u}(1,1))$ gives a finer decomposition of the harmonics into degrees homogeneous both for a set of complex variables and for the set of complex conjugate variables. 
	This refinement was used in one of Koornwinder's proofs for the addition formula for Jacobi polynomials~\cite{Ko}. 
	In the deformed setting, we show that the refinement still holds, but the interpretation in terms of degrees for complex variables and their conjugates is lost. 
	The deformed harmonics for the dihedral case are described in Theorem~\ref{t:eigenvaluesdihedral}, and in Remark~\ref{r:harmDunkl} we discuss the relation with the harmonics determined by Dunkl~\cite{Du} for the standard polynomial case.

	\section{Preliminaries}
	
	Throughout, the standard notations $[\cdot,\cdot]$ for commutator, and $\{\cdot,\cdot\}$ for anticommutator will be used.
	Consider a real Euclidean vector space $E \cong \mathbb{R}^{N}$ with a positive definite symmetric bilinear form $B$. Denote the natural pairing between $E$ and its dual space $E^*$ by $\langle \cdot,\cdot \rangle \colon E^* \times E \to \mathbb{R}$. The $\bC$-bilinear extension $E^*_\bC\times E_\bC\to\bC$ will also be denoted by
	$\langle \cdot,\cdot \rangle$.
	
	For $G$ a finite reflection group acting on $E$,  
	we denote by $\mathcal S$ the set of reflections of $G$. 
	The action of $G$ on $E$ is naturally extended to $E^*$ as the contragradient action and in turn also to $E_\bC$ and $E^*_\bC$, and to the tensor algebra $T(E_\bC\oplus E^*_\bC)$. We denote by $g \cdot X$ the action of $g\in G$ on $X \in T(E_\bC\oplus E^*_\bC)$, and denote by  $T(E_\bC\oplus E^*_\bC) \rtimes G$  the algebra where the relation $g X = (g \cdot X) g$ holds.

	Given $s\in \mathcal{S}$, fix $\alpha_s$ (resp.~$a_s$) to be a $(-1)$-eigenvector for $s$ acting on $E$ (resp. $E^*$). 
	Denote $\alpha_s^{\vee} = 2 a_s/ \langle  a_s,\alpha_s \rangle \in E^*$. 
	Let $c$ be a map from $\mathcal S$ to $\bC$ (or more generally to a family of variables) 
	such that the elements of an orbit, under the conjugation action of $G$, have the same image. 
	
	The rational Cherednik algebra $H_c(G,E)$ is the quotient of $T(E_\bC\oplus E^*_\bC) \rtimes G$ by the relations
	\begin{equation}
		[\xi,\eta] = 0 \text{ for } \xi,\eta \in E, \quad [x,y] = 0 \text{ for } x,y \in E^*;
		\label{comm}
	\end{equation} 
	\begin{equation}\label{RC}
		[\xi,x] = \langle x,\xi \rangle - \sum_{s\in\mathcal S} \langle  x,\alpha_s\rangle\langle  \alpha_s^{\vee}, \xi \rangle  c(s) s.
	\end{equation} 
		As a vector space, this algebra satisfies the PBW property and its triangular decomposition leads to the notion of category $\mathcal{O}$, see \cite{GGOR} for the details.
		
		\subsection{Complexification and involutions}\label{s:involutions}
		The Euclidean structure $B$ on $E$ gives rise to two non-degenerate pairings on $E_\bC$, a bilinear and a Hermitian one. We denote by $B_\bC:E_\bC\times E_\bC\to\bC$ the bilinear extension of $B$ and by $(\cdot,\cdot)$ the Hermitian one (which we agree to be linear on the first variable). We thus get two identifications $\top,\ast :E_\bC\to E^*_\bC$ defined via
		\[
		\langle \xi^*,\eta\rangle = (\eta,\xi),\qquad\qquad \langle \xi^\top,\eta\rangle = B_\bC(\eta,\xi),
		\]
		for all $\xi,\eta\in E_\bC$. Note that $\top$ is a linear isomorphism while $\ast$ is  conjugate-linear. We let also $\theta:E_\bC\to E_\bC$ be the complex-conjugation automorphism that maps $\xi\otimes\lambda\in E_\bC = E\otimes_\bR\bC$ to $\theta(\xi\otimes\lambda)=\xi\otimes\overline{\lambda}$. We naturally extend $\top,\ast$ and $\theta$ to involutions on $E_\bC\oplus E^*_\bC$, from which we get $\ast = \theta\circ\top = \top\circ\theta$. Note also that
		\begin{equation*}
			(\xi,\eta) = B_\bC(\xi,\theta(\eta)), 
		\end{equation*}
		for all $\xi,\eta\in E_\bC$. We may extend these involutions to involutions or anti-involutions on any associative algebra generated by $E_\bC$.

		\subsection{Standard modules and unitarity}\label{s:unitary} For an irreducible $\mathbb{C}[G]$-module $\tau$ with space $V(\tau)$, we denote by $M_c(\tau) = H_c(G,E) \otimes_{G\ltimes \mathbb{C}[E]} V(\tau) = \mathbb{C}[E] \otimes V(\tau)$ the standard module of $H_c(G,E)$ where $E$ acts by zero on $V(\tau)$.
		When $\tau$ is the trivial module, $M_c(\tau)$ corresponds to the space of polynomials $\mathbb{C}[E]= \mathbb{C}[x_1,\dotsc,x_N]$ (where $x_1,\dotsc,x_{N}$ is a basis of $E^*$).
		In this case, an explicit (faithful~\cite{EtGi}) realisation of $H_c(G,E)$ is given by means of Dunkl operators~\cite{Du} (for the elements of $E$) 
		and coordinate variables (for $E^*$). 
		
		Define $\mathbb{C}_{k}$ to be the space of complex-valued polynomials that are homogeneous of degree $k$ and put
		$M_c(\tau)_k = \bC_k\otimes V(\tau)$.  We shall refer to the subspace $M_c(\tau)_k$ as the degree-$k$ part of $M_c(\tau)$.
		
		Recall that the involution $\ast$ of $E_\bC\oplus E^*_\bC$ described in the previous section yields natural isomorphisms between the polynomial modules $\mathbb{C}[E]=M_c(\textup{triv})$ and $\mathbb{C}[E^*]$ and, moreover, it is easy to show that, for all reflections $s$ of $G$,
		we have $\ast^{-1} \circ s \circ \ast = s$ inside $\End(M_c(\textup{triv}))$. We thus extend the involution $\ast$ of $E_\bC\oplus E_\bC^*$ to a conjugate-linear anti-involution on $H_c(G,E)$, by letting $s^*=s$. Note that if
		$\{\xi_1,\ldots,\xi_N\}\subset E$ and $\{x_1,\ldots,x_N\}\subset E^*$ are orthonormal bases that are in duality, we have $\xi_j^* = x_j$ and $x_j^* = \xi_j$, for all $1\leq j \leq N$.
		Similarly, we extend $\theta$ to a conjugate-linear involution of $H_c(G,E)$ by letting $\theta(s) = s$.

		Consider also the contravariant Hermitian structure $\beta_{c,\tau}(\cdot,\cdot)$ on $M_c(\tau)$ that restricts to a $G$-invariant inner product on $M_c(\tau)_0=V(\tau)$ (see \cite[Section 2.4]{DO}). This form satisfies the contravariance property $\beta_{c,\tau}(h(p),q) = \beta_{c,\tau}(p,h^*(q))$, for all $h\in H_c(G,E)$ and $p,q\in M_c(\tau)$. For real-valued parameters $c$ sufficiently close to $c=0$ (see \cite[Lemma 2.2]{CDM} and the references therein), the Hermitian structure $\beta_{c,\tau}(\cdot,\cdot)$ is in fact positive-definite and $M_c(\tau)$ is a $\ast$-unitarisable $H_c(G,E)$-module.
		This unitary structure on the standard module will be crucial in the decomposition results later on in the paper.

		\subsection{A realisation of \texorpdfstring{$\mathfrak{su}(1,1)$}{su(1,1)}}
		
		For completeness, we first state the following properties.
		\begin{Lemma}\label{Lemma1.1}
			Let $\xi_1,\dotsc,\xi_N$ be an orthonormal basis for $E$ and $x_1,\dotsc,x_{N}$
			a dual basis of $E^*$ such that $\langle x_j, \xi_k \rangle = \delta_{jk}$. In $H_c(G,E)$, the following relations hold
			\begin{equation}
				\bigg[\frac12 \sum_{k=1}^N \xi_k^2, x_j\bigg]=\xi_j, \qquad \bigg[\frac12 \sum_{k=1}^N x_k^2, \xi_j\bigg] = -x_j \rlap{\,.} \label{xp}
			\end{equation} 	
			As a consequence of the preceding relations and the commutativity~\eqref{comm}, we also have	
			\begin{equation}
				[\xi_j,x_k]
				=[\xi_k,x_j]\rlap{\,,}
				\label{ijji}
			\end{equation} 	
			and for all $\xi \in E^*$ and $x\in E$:
			\begin{equation}
				\bigg[\frac12 \sum_{k=1}^N \{\xi_k, x_k\} , \xi\bigg] = -\xi \rlap{\,,}\qquad 	\bigg[\frac12 \sum_{k=1}^N \{\xi_k, x_k\} , x\bigg] = x \rlap{\,.}
				\label{Euler}
			\end{equation} 
		\end{Lemma}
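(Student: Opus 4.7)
The three identities will be proved in the order they are stated, each feeding into the next. For \eqref{xp}, the natural move is to expand $[\tfrac12\sum_k\xi_k^2,x_j]$ by Leibniz and substitute \eqref{RC} into each $[\xi_k,x_j]$. The scalar $\delta_{kj}$-parts collect to $\xi_j$ after summing. The reflection terms, after rewriting $s\,\xi_k=(s\cdot\xi_k)\,s$, take the shape $-\sum_s\bigl(\cdots\bigr)\bigl(\sum_k\langle a_s,\xi_k\rangle(\xi_k+s\cdot\xi_k)\bigr)\,s$, whose inner sum vanishes because $v_s:=\sum_k\langle a_s,\xi_k\rangle\xi_k$ is, by orthonormality of $\{\xi_k\}$, the image of $a_s$ under the $B$-induced isomorphism $E^*\to E$, hence a $(-1)$-eigenvector of $s$ on $E$ and therefore proportional to $\alpha_s$; consequently $s\cdot v_s=-v_s$. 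The second identity of \eqref{xp} follows by the identical argument with $E$ and $E^*$ swapped.

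Relation \eqref{ijji} then drops out of the Jacobi identity applied to the triple $\bigl(\tfrac12\sum_k\xi_k^2,\,x_j,\,x_l\bigr)$: using $[x_j,x_l]=0$ from \eqref{comm} and substituting \eqref{xp} gives $[\xi_j,x_l]+[x_j,\xi_l]=0$, which is the claim. For \eqref{Euler}, I would compute, using $[\xi_k,\xi_j]=0$, that $[\xi_k x_k+x_k\xi_k,\xi_j]=\{\xi_k,[x_k,\xi_j]\}$; by \eqref{ijji} this equals $-\{\xi_k,[\xi_k,x_j]\}$, which via the algebraic identity $\{a,[a,b]\}=[a^2,b]$ becomes $-[\xi_k^2,x_j]$. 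Summing over $k$ and applying \eqref{xp} produces $-2\xi_j$, and by linearity the identity extends from the basis to an arbitrary $\xi$; the $x$-companion follows by the symmetric calculation from the second half of \eqref{xp}.

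The only step that is not purely formal bookkeeping is the cancellation of the reflection contribution in \eqref{xp}, which rests on the identification of $v_s$ as a $(-1)$-eigenvector of $s$. The orthonormality of the chosen basis is essential at that point, and I expect this to be the main conceptual hurdle; everything else reduces to Jacobi, the identity $\{a,[a,b]\}=[a^2,b]$, and bookkeeping.
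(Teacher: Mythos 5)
Your argument is correct and follows essentially the same route as the paper: for \eqref{ijji} you apply Jacobi to $\bigl(\tfrac12\sum_k\xi_k^2,x_j,x_l\bigr)$ just as the paper does, and for \eqref{Euler} you combine \eqref{xp}, \eqref{ijji}, the identity $\{a,[a,b]\}=[a^2,b]$, and the vanishing commutators from \eqref{comm} in the same spirit (the paper runs the chain forward from $2\xi_j=\sum_k[\xi_k^2,x_j]$, you run it backward from $[\{\xi_k,x_k\},\xi_j]$, but it is the same calculation). The one place you go further is \eqref{xp}, which the paper simply cites; your verification that the reflection terms cancel via the observation that $v_s=\sum_k\langle a_s,\xi_k\rangle\xi_k$ is a $(-1)$-eigenvector of $s$ (using orthonormality to identify $v_s$ with the $B$-musical image of $a_s$) is correct and is exactly the standard proof in the cited references.
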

		\begin{proof}
			For relations~\eqref{xp} see for instance \cite{He}. Using~\eqref{xp} and~\eqref{comm}, we find
			\begin{equation*}
				[\xi_k,x_j]=\bigg[\bigg[\frac12 \sum_{l=1}^N \xi_l^2, x_k\bigg],x_j\bigg]
				=\bigg[\bigg[\frac12 \sum_{l=1}^N \xi_l^2, x_j\bigg],x_k\bigg]
				=[\xi_j,x_k] \rlap{\,,}
			\end{equation*} 
			and
			\[
			2 \xi_j =	 \sum_{k=1}^N\big[ \xi_k^2, x_j\big]
			= \sum_{k=1}^N \{\xi_k,[ \xi_k, x_j]\}
			= \sum_{k=1}^N \{\xi_k,[ \xi_j, x_k]\}
			= \sum_{k=1}^N [\xi_j,\{\xi_k , x_k\}]  \rlap{\,,}
			\]
			\[
			-2x_j =	 \sum_{i=1}^N\big[ x_k^2, \xi_j\big]
			= \sum_{k=1}^N \{x_k,[ x_k, \xi_j]\}
			= \sum_{k=1}^N \{x_k,[ x_j, \xi_k]\}
			= \sum_{k=1}^N [x_j,\{x_k , \xi_k\}] \rlap{\,.} \qedhere
			\]
		\end{proof}
		
		\begin{Propo}\label{sl2}
			In $H_c(G,E)$ we have a realisation of the real form $\mathfrak{su}(1,1)$ of the Lie algebra $\mathfrak{sl}(2,\bC)$, generated by
			\begin{align*}
				E_+ &    = \frac12\sum_{j=1}^{N} x_j^2, &
				E_- &    = -\frac12\sum_{j=1}^{N} \xi_j^2 ,
				&
				H &=\sum_{j=1}^{N} x_j\xi_j + \frac{N}{2} -\sum_{s\in\mathcal S} c(s) s  ,
			\end{align*}
			with the following commutation relations
			\begin{align*}
				[H, E_{\pm}] &= \pm 2 E_{\pm}	,
				& [E_+ ,E_- ] &= H. 
			\end{align*}
			In terms of the $\ast$-operation of Section~\ref{s:unitary}, we have $E_{\pm}^* = -E_{\mp}$ and $H^*=H$.
		\end{Propo}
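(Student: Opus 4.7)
The plan is to verify the $\mathfrak{sl}(2)$-bracket relations by a direct computation built on Lemma~\ref{Lemma1.1}, after first recognising that the given $H$ is just the symmetric Euler element.

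\medskip

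\textbf{Step 1: Rewrite $H$ in symmetric form.} I first claim that
\[
H \;=\; \frac{1}{2}\sum_{j=1}^{N} \{x_j,\xi_j\}.
\]
To see this, write $x_j\xi_j = \tfrac{1}{2}\{x_j,\xi_j\} - \tfrac{1}{2}[\xi_j,x_j]$ and sum over $j$. Using the defining relation~\eqref{RC} with $x=x_j$, $\xi=\xi_j$,
\[
\sum_{j=1}^N [\xi_j,x_j] \;=\; N \;-\; 2\sum_{s\in\mathcal S} c(s)\, s \cdot \sum_{j=1}^N \frac{\langle x_j,\alpha_s\rangle \langle a_s,\xi_j\rangle}{\langle a_s,\alpha_s\rangle}.
\]
Since $\{\xi_j\}$ and $\{x_j\}$ are dual bases, $\sum_j \langle x_j,\alpha_s\rangle\, \xi_j = \alpha_s$, so the inner sum equals $\langle a_s,\alpha_s\rangle/\langle a_s,\alpha_s\rangle = 1$. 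Substituting gives $\sum_j x_j\xi_j = \frac{1}{2}\sum_j\{x_j,\xi_j\} - \tfrac{N}{2} + \sum_s c(s)\, s$, which is exactly $H + 0$ after adding $\tfrac{N}{2} - \sum_s c(s)\, s$, confirming the claim.

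\medskip

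\textbf{Step 2: The weight relations $[H,E_\pm]=\pm 2E_\pm$.} These are now immediate from the Euler identities~\eqref{Euler} of Lemma~\ref{Lemma1.1}. Indeed,
\[
[H,E_+] \;=\; \tfrac{1}{2}\sum_j [H,x_j^2] \;=\; \tfrac{1}{2}\sum_j \{[H,x_j],x_j\} \;=\; \tfrac{1}{2}\sum_j \{x_j,x_j\} \;=\; \sum_j x_j^2 \;=\; 2E_+,
\]
and the analogous computation using $[H,\xi_j] = -\xi_j$ yields $[H,E_-] = -2E_-$.

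\medskip

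\textbf{Step 3: The bracket $[E_+,E_-]=H$.} I would use~\eqref{xp}, which gives $[E_+,\xi_k] = -x_k$ and $[E_-,x_j] = -\xi_j$. Then expanding by Leibniz,
\[
[E_+,E_-] \;=\; -\tfrac{1}{2}\sum_{k} [E_+,\xi_k^2] \;=\; -\tfrac{1}{2}\sum_{k} \{[E_+,\xi_k],\xi_k\} \;=\; \tfrac{1}{2}\sum_k \{x_k,\xi_k\} \;=\; H,
\]
using the identification from Step~1.

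\medskip

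\textbf{Step 4: The $*$-statements.} Since $*$ is a conjugate-linear anti-involution with $x_j^*=\xi_j$, $\xi_j^*=x_j$, and $s^*=s$, one reads off $E_+^* = \tfrac{1}{2}\sum_j (x_j^2)^* = \tfrac{1}{2}\sum_j \xi_j^2 = -E_-$, and symmetrically $E_-^* = -E_+$. For $H$ in the symmetric form of Step~1, $(x_j\xi_j)^* = \xi_j^* x_j^* = x_j\xi_j$, so $H^*=H$.

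\medskip

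The only real content is Step~1: once $H$ is recognised as the symmetric Euler element, the rest of the proposition is a short application of Lemma~\ref{Lemma1.1}. The potential pitfall is correctly evaluating the correction term coming from the reflection part of~\eqref{RC}, which is why I isolated that computation first.
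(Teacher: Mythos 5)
Your proof is correct and takes the same route as the paper: the paper's own argument also identifies $H$ with the symmetric Euler element $\tfrac12\sum_j\{\xi_j,x_j\}$ via relation~\eqref{RC} (this is exactly equation~\eqref{H}), then appeals to Lemma~\ref{Lemma1.1} and the well-known $\mathfrak{sl}(2)$ triple structure, and calls the $\ast$-identities straightforward. You have simply filled in the brackets the paper compresses into a citation to Heckman, so there is nothing methodologically different here.
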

		\begin{proof}
			The realisation of $\mathfrak{su}(1,1)\cong \mathfrak{sl}(2,\bR)$ is well-known (see \cite{He})
			and follows from Lemma~\ref{Lemma1.1}, since by means of~\eqref{RC}, we obtain the expression for $H$ as follows
			\begin{equation}\label{H}
				\frac12 \sum_{j=1}^N \{\xi_j, x_j\} 
				= \frac12\sum_{j=1}^{N} (2x_j\xi_j +   [\xi_j ,x_j ] )
				= \sum_{j=1}^{N} x_j\xi_j + \frac{N}{2} - \sum_{s\in\mathcal S} c(s) s.
			\end{equation}
			
			The identities  $E_{\pm}^* = -E_{\mp}$ and $H^*=H$ are straight-forward, using $\xi_j^* = x_j,x_j^* = \xi_j$, for all $1\leq j \leq N$ and $s^*=s$ for all reflections of $G$.
		\end{proof}
		
		Whenever the parameter map $c$ is real-valued and close enough to $c=0$ (as recalled in Section \ref{s:unitary}), because of the unitarity, the standard module $M_c(\tau)$ decomposes as a direct sum of irreducible representations of $\mathfrak{su}(1,1)$. Let $N_c(\tau)$ denote the scalar by which the central element 
		\begin{equation}\label{e:sigma}
			\sigma = \sum_s c(s)s
		\end{equation} 
		of $\mathbb{C}[G]$ acts on $V(\tau)$. Note that $H$ acts on the degree-$k$ part $M_c(\tau)_k$ by the scalar $(k+\tfrac{N}{2}-N_c(\tau))$ times the identity. We let $\mathcal{H}_c(\tau)_{k}\subseteq M_c(\tau)_k$ be the intersection of $M_c(\tau)_k$ with the kernel of $E_-$. Let $\mathcal{U}\subset H_c(G,E)$ denote the centralizer algebra of the $\mathfrak{su}(1,1)$-subalgebra introduced above. It is known (see \cite{CDM}) that this algebra is the associative subalgebra of $H_c(G,E)$, generated by $G$ and the elements
		\begin{equation}\label{e:X}
			X_{xy} = xy^* - yx^* ,
		\end{equation} 
		with $x,y\in E^*$. 
		A precise description of the relations of this non-homogeneous quadratic algebra was obtained in~\cite{FH}, and in~\cite{DOV} for a more general framework.
		
		For real-valued parameters $c$ as above, it is known that we have a 
		$(\mathcal{U},\mathfrak{su}(1,1))$-decomposition (see \cite{CDM} and also \cite{BSO})  of the standard module 
		\begin{equation}\label{e:sl2decomp}
			M_c(\tau) = \bigoplus_{k\in\mathbb{Z}_{\geq 0}} \mathcal{H}_c(\tau)_{k}\otimes L_{\mathfrak{su}(1,1)}(k + \tfrac{N}{2} - N_c(\tau)),
		\end{equation}
		where $L_{\mathfrak{su}(1,1)}(\mu)$ denotes the infinite-dimensional irreducible unitary lowest-weight module for $\mathfrak{su}(1,1)$ with
		lowest weight $\mu\in\bR$.

		\subsection{Complex structure}\label{s:ComplexStructure} 
		We will give a brief overview of the concept of a complex structure and the notations used in this paper. For a more detailed exposition of this complex structure in the non-deformed setting, see for instance~\cite{BDES} and references therein.  
		See also the related discussion on the Hodge decomposition theory in~\cite[Example 4(b)]{Ho1}. 
		
		Let $E$ be even-dimensional, with $\dim_{\mathbb{R}}(E) =2n$. A complex structure compatible with the Euclidean structure on $E$ is an element $J \in \mathrm{SO}(E,B) \cong\mathrm{SO}(2n,\mathbb{R})$ satisfying $J^T = -J$, hence $J^2 = -1_E$.
		For a given $J$, we can choose a basis for $E$ orthonormal with respect to $B(\cdot,\cdot)$, such that the action of $J$ on $E$ is given by
		\[
		J = \begin{pmatrix} \mathit{0}_n & I_n \\ -I_n & \mathit{0}_n \end{pmatrix}.
		\] 
		We denote this basis by $\{\xi_j\}_{j=1}^{2n}$, and moreover let $\eta_j := \xi_{n+j}$ for $j\in \{1,\dotsc,n\}$.  
		A dual basis of coordinate functions will be denoted by $\{x_j\}_{j=1}^{2n}$, with $y_j := x_{n+j}$ for $j\in \{1,\dotsc,n\}$.  
		We have
		\[
		J \cdot\xi_j = -\eta_j, \qquad J \cdot\eta_j = \xi_j, \qquad 
		J \cdot x_j = -y_j, \qquad J \cdot y_j = x_j\rlap{\,.}
		\]
		
		\begin{Remar}
			A complex structure $J$ does not in general form an automorphism of the rational Cherednik algebra $H_c(G,E)$, compatible with the commutation relations~\eqref{RC}. This is only the case when, on the one hand, the group $G$, viewed, through its action on $E$, as a subgroup of the orthogonal group $\mathrm{O}(E,B)$, is preserved under conjugation with $J$. (Alternatively, this is the case when $J$ preserves the root system associated with $G$.) On the other hand, the images of the parameter map $c$ have to be equal when two orbits are related via conjugation with $J$. 
			
			For the results in the current paper, this compatibility of $J$ and $H_c(G,E)$ is not required. When doing explicit computations, as in Sections~\ref{Section:dihedral} and \ref{Section:CliffordDihedral}, it is convenient to choose $J$ and the action of $G$ on $E$ such that their interaction is somewhat nice.
		\end{Remar}
		
		Via the complex structure $J$, one can make two identifications of $E\cong \mathbb{R}^{2n}$ with $\mathbb{C}^n$ by using the first $n$ coordinates of $E$ as real parts and the last $n$ as imaginary parts. Let $i$ denote the imaginary unit. For $j=1,\dotsc,n$, we define $z_j := x_j + i \, y_{j}$ and $\bar z_j  := x_j - i \, y_{j}$. 
		Similarly, define $\zeta_j := \frac12(\xi_j - i \, \eta_{j})$ and $\bar \zeta_j  := \frac12(\xi_j + i \, \eta_{j})$.
		Classically, when $c = 0$, $\zeta_j $ and $\bar \zeta_j$ correspond to the Wirtinger derivatives $\partial_{z_j} = (\partial_{x_j} - i\partial_{y_j})/2$ and $\partial_{\bar z_j} = (\partial_{x_j} +i \partial_{y_j})/2$. Note that the complex-conjugation involution $\theta$ of $E^*_\bC\oplus E_\bC$ (see \ref{s:involutions}) satisfies $\theta(z_j) = \bar z_j,\theta(\zeta_j) =  \bar \zeta_j$, for all $1\leq j \leq n$. For the involutions we have $(z_j)^\top = 2 \bar \zeta_j$ and $(z_j)^* = 2 \zeta_j$, for all $1\leq j \leq n$.
		
		The complex variables are eigenvectors for the action of the complexification $J_{\bC}$:
		\[
		J_{\bC} \cdot \zeta_j = -i\,\zeta_j, \qquad J_{\bC} \cdot \bar \zeta_j = i\,\bar \zeta_j\rlap{\,,}\qquad J_{\bC} \cdot z_j = i\,z_j, \qquad J_{\bC} \cdot\bar z_j = -i\,\bar z_j\rlap{\,.}
		\]
		Denote by $W^\pm\subset E_\bC$ the $(\mp i)$-eigenspace of $J_\bC$.
		By means of the projection operators $\pi^{\pm} = (1\pm i J_{\mathbb{C}} )/2$, we have $W^{\pm} = \pi^{\pm}(E_{\mathbb{C}})$. 
		These spaces give a direct sum decomposition of  $E_{\mathbb{C}} = W^+ \oplus W^-$ into isotropic spaces with respect to $B_{\bC}$.  Each of the subspaces $W^{\pm}$ is isomorphic to $E$ and to $\mathbb{C}^{n}$, and they are mapped to each other by complex conjugation. 
		There are two Hermitian metrics $h^\pm$ on $E$ given by 
		\[
		h^\pm(\cdot,\cdot) = \frac{1}{2}B(\cdot,\cdot) \pm \frac{i}{2} B(J\cdot,\cdot).
		\] 
		They are such that the identifications of $E$ with $W^\pm$ via $\pi^\pm$ are isometric in the sense that $(\pi^\pm(\xi),\pi^\pm(\eta)) = B_\bC(\pi^\pm(\xi),\pi^\mp(\eta)) = h^\pm(\xi,\eta)$, for all $\xi,\eta\in E$.

		
		There is an action of the unitary group $\mathrm{U}(n)$ that preserves the Hermitian metrics $h^\pm(\cdot,\cdot)$ and each of the spaces $W^{\pm}$. It is the subgroup of $\mathrm{GL}(W^+)$ (or $\mathrm{GL}(W^-)$) commuting with complex conjugation, see~\cite[Example 4(b)]{Ho1}. 
		Explicitly, the unitary group $\mathrm{U}(n)$ is realised inside $\mathrm{O}(2n,\mathbb{R})$ by matrices of the form 
		\[
		A = \begin{pmatrix} B & -C \\ 
			C & B \end{pmatrix} \qquad \text{with }B^TB + C^TC = I_n \text{ and }B^TC - C^TB = \mathit{0}_n
		\]
		where $B,C$ are real $n\times n$ matrices.  
		These are precisely the orthogonal matrices that commute with the complex structure $J$. There are two isomorphisms with $\mathrm{U}(n)$ as $n\times n$ matrix group, given by
		\[
		A = \begin{pmatrix} B & -C \\ 
			C & B \end{pmatrix}  \leftrightarrow B + i C \in \mathrm{U}(n), \quad\text{and}\quad A = \begin{pmatrix} B & -C \\ 
			C & B \end{pmatrix}  \leftrightarrow B - i C \in \mathrm{U}(n),
		\]
		corresponding to the actions of $A$ on the coordinates $z_1,\dotsc,z_{n}$ and $\bar z_1,\dotsc,\bar z_n$ respectively. Hence, if $A\in \mathrm{U}(n)$ acts on $(z_1,\dotsc,z_{n})^T$ through matrix multiplication, its action on $(\bar z_1,\dotsc,\bar z_n)^T$ is given through multiplication by the conjugate transpose or Hermitian conjugate  $A^*$.


		\section{Scalar unitary duality}\label{Section:scalar}
		
		\subsection{A realisation of \texorpdfstring{$\mathfrak{u}(1,1)$}{u(1,1)}}
		
		Recall that $E$ is assumed to be even-dimensional, with $\dim_{\mathbb{R}}(E) =2n$. 
		We first consider the following lemma on the adjoint action of $\mathfrak{su}(1,1)$, generated by $H$ and $E_{\pm}$, on $H_c(G,E)$.  
		
		\begin{Lemma}\label{Lemma1.4}
			For $j \in \{1,\dotsc,n\}$, we have
			\begin{equation}\label{Hzz}
				\begin{aligned}[]
					[E_+, 2\zeta_j]= -\bar z_j,\quad 
					[E_-, z_j]= -2\bar \zeta_j,\quad
					[H,z_j]= z_j,\quad 
					[H, \zeta_j]= -\zeta_j,
					\\
					[E_+,2\bar \zeta_j]= -z_j ,\quad 
					[E_-,\bar z_j]=  -2\zeta_j, \quad 
					[H,\bar z_j]= \bar z_j,\quad 
					[H,\bar \zeta_j]= -\bar \zeta_j.
				\end{aligned}
			\end{equation}
			
		\end{Lemma}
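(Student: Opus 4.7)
The plan is to expand each complex variable into its real and imaginary parts using the definitions in Section~\ref{s:ComplexStructure} and then invoke the commutation relations for the $\mathfrak{su}(1,1)$-generators on the real basis that were already recorded in Lemma~\ref{Lemma1.1} (combined with the formula for $H$ as half the sum of anticommutators $\{\xi_k,x_k\}$ derived in equation~\eqref{H}). All eight identities will follow by bilinearity of the commutator, so the argument is essentially bookkeeping.

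More concretely, I would first record the building blocks on the real basis. Using \eqref{xp}, for any $j\in\{1,\dotsc,n\}$ one has
\begin{equation*}
[E_+,\xi_j] = -x_j,\quad [E_+,\eta_j] = -y_j,\quad [E_-,x_j] = -\xi_j,\quad [E_-,y_j] = -\eta_j,
\end{equation*}
and \eqref{Euler} (applied to the realisation \eqref{H} of $H$) gives
\begin{equation*}
[H,x_j] = x_j,\quad [H,y_j] = y_j,\quad [H,\xi_j] = -\xi_j,\quad [H,\eta_j] = -\eta_j.
\end{equation*}
Then I would substitute $2\zeta_j = \xi_j - i\eta_j$, $2\bar\zeta_j = \xi_j + i\eta_j$, $z_j = x_j + iy_j$, $\bar z_j = x_j - iy_j$ into each left-hand side of~\eqref{Hzz} and compute; for example, $[E_+,2\zeta_j] = [E_+,\xi_j] - i[E_+,\eta_j] = -x_j + iy_j = -\bar z_j$, and analogously for the other seven identities.

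There is no genuine obstacle here: the argument uses only the linearity of $[\cdot,\cdot]$ in each slot and the eight scalar commutators listed above, so the main task is simply to keep the signs and factors of $i$ straight. The statement is recorded separately because the complex-variable commutators will be used repeatedly in the sequel, and they make transparent that $z_j,\bar z_j$ (resp.\ $\zeta_j,\bar\zeta_j$) are weight vectors of weight $+1$ (resp.\ $-1$) for $H$ while $E_\pm$ intertwine holomorphic and antiholomorphic coordinates.
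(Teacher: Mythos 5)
Your proof is correct and follows essentially the same route as the paper, which simply cites relations~\eqref{xp} and~\eqref{Euler}; you have written out the intermediate step of the real-basis commutators and the substitution of $z_j,\bar z_j,\zeta_j,\bar\zeta_j$, which is exactly what "follows immediately" is abbreviating. All signs and factors of $i$ check out.
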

		\begin{proof} 
			These follow immediately from relations~\eqref{xp} and \eqref{Euler}.
		\end{proof}
		
		\begin{Thm}\label{u11}
			In $H_c(G,E)$ we have a realisation of the real form $\mathfrak{u}(1,1)$ of the Lie algebra $\mathfrak{gl}(2,\bC)$, generated by
			\begin{align*}
				E_+ & = \frac12\sum_{j=1}^{n}  z_j \bar z_j   = \frac12\sum_{j=1}^{n} (x_j^2+y_j^2), \\
				E_- &   = -2\sum_{j=1}^{n}  \zeta_j \bar \zeta_j   = -\frac12\sum_{j=1}^{n} (\xi_j^2+\eta_j^2) ,
				\\
				H &=  \sum_{j=1}^{n}( z_j\zeta_j  + \bar z_j\bar \zeta_j ) + n -\sum_{s\in\mathcal S} c(s) s =\sum_{j=1}^{n} ( x_j\xi_j  +  y_j\eta_j)+ n -\sum_{s\in\mathcal S} c(s) s 
				\\
				Z_0  &= \sum_{j=1}^{n} (\bar z_j \bar \zeta_j - z_j \zeta_j ) = i \sum_{j=1}^{n} (x_j\eta_j-y_{j}\xi_j)  ,
			\end{align*}
			where $Z_0$ is central, and we have the following commutation relations
			\begin{align*}
				[H, E_{\pm}] &= \pm 2 E_{\pm}	,
				& [E_+ ,E_- ] &= H. 
			\end{align*}
			The $\mathfrak{su}(1,1)$ subalgebra is generated by $H,E_+,E_-$.
			In terms of the $\ast$-operation of Section~\ref{s:unitary}, we have $E_{\pm}^* = -E_{\mp}$, $H^*=H$ and $Z_0^* = Z_0$.
		\end{Thm}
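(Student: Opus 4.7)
The statement reduces to four checks: that the two displayed expressions for each of $E_\pm$, $H$, $Z_0$ agree; that $H, E_+, E_-$ satisfy the stated $\mathfrak{sl}(2)$-relations; that $Z_0$ is central; and that the $\ast$-identities hold. Each is a direct computation in $H_c(G,E)$, leaning only on Proposition~\ref{sl2} and Lemma~\ref{Lemma1.4}.

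First I would expand the complex variables in the real orthonormal basis. Using the commutativity of $E^*$ from~\eqref{comm}, $z_j\bar z_j = (x_j + i y_j)(x_j - i y_j) = x_j^2 + y_j^2$, which gives the two forms of $E_+$; an analogous expansion using commutativity of $E$ yields $\zeta_j\bar\zeta_j = \frac14(\xi_j^2 + \eta_j^2)$, hence the two forms of $E_-$. For $H$, expanding $z_j\zeta_j + \bar z_j\bar\zeta_j$ causes the cross-terms in $x_j\eta_j$ and $y_j\xi_j$ to cancel, leaving $x_j\xi_j + y_j\eta_j$, so this $H$ coincides with the $H$ of Proposition~\ref{sl2} taken with $N = 2n$. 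The same cancellation with signs reversed identifies the two forms of $Z_0$. The $\mathfrak{sl}(2)$-relations, and the identities $E_\pm^* = -E_\mp$ and $H^* = H$, are then immediate consequences of Proposition~\ref{sl2}.

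The main content of the theorem is the centrality of $Z_0$. For $[H, Z_0]$, Lemma~\ref{Lemma1.4} gives $[H, z_j] = z_j$ and $[H, \zeta_j] = -\zeta_j$, so the Leibniz rule yields $[H, z_j\zeta_j] = 0$; the bracket $[H, \bar z_j\bar\zeta_j]$ vanishes analogously. For $[E_+, Z_0]$ I would use that $E_+$ lies in the commutative subalgebra generated by $E^*$, so $[E_+, z_j] = [E_+, \bar z_j] = 0$ by~\eqref{comm}; combined with $[E_+, 2\zeta_j] = -\bar z_j$ and $[E_+, 2\bar\zeta_j] = -z_j$ from Lemma~\ref{Lemma1.4}, this gives $[E_+, Z_0] = \frac12 \sum_j (z_j\bar z_j - \bar z_j z_j) = 0$, again by~\eqref{comm}. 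The computation of $[E_-, Z_0]$ is dual, using commutativity of $E$ in place of that of $E^*$.

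Finally, $Z_0^* = Z_0$ is a direct check: since $\ast$ is a conjugate-linear antihomomorphism with $x_j^* = \xi_j$ and $y_j^* = \eta_j$, one has $(x_j\eta_j)^* = y_j\xi_j$ and symmetrically, so $\bigl(i(x_j\eta_j - y_j\xi_j)\bigr)^* = -i(y_j\xi_j - x_j\eta_j) = i(x_j\eta_j - y_j\xi_j)$, summing to $Z_0$. Altogether, the four generators span a copy of $\mathfrak{gl}(2,\bC)$ whose real form singled out by the $\ast$-involution is $\mathfrak{u}(1,1)$, with the central $\mathfrak{u}(1)$-factor generated by $Z_0$. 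I do not anticipate any deeper obstacle: every step reduces to the commutativity of $E$ or of $E^*$, combined with the brackets recorded in Lemma~\ref{Lemma1.4}, plus the bookkeeping of complex conjugation.
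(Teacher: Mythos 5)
Your proposal is correct and takes essentially the same route as the paper's (terse) proof: the $\mathfrak{su}(1,1)$ part is delegated to Proposition~\ref{sl2}, and the centrality of $Z_0$ is verified via the adjoint-action formulas of Lemma~\ref{Lemma1.4} together with the commutativity relations~\eqref{comm}. You simply spell out the Leibniz-rule bookkeeping and the $\ast$-computation that the paper leaves implicit.
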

		
		\begin{proof}
			See Proposition~\ref{sl2} and the definitions in the previous section. 
			The fact that $Z_0$ is central follows by means of 
			Lemma~\ref{Lemma1.4}.
		\end{proof}
		
		\begin{Remar}
			One can easily show that the $\mathfrak{su}(1,1)$ algebra generated by $H,E_\pm$ is independent of the choice of orthonormal basis made. On the other hand, the central element $Z_0$ is only invariant under basis transformation by elements of $U(n)$, as described in Section~\ref{s:ComplexStructure}. 
		\end{Remar}
		
		Classically, when $c = 0$, the linear combinations 
		\[H_z = (H-Z_0)/2,\quad\text{and}\quad H_{\bar z } = (H+Z_0)/2
		\] 
		measure, respectively, the holomorphic and anti-holomorphic degree of a monomial, so that $Z_0$ induces an operator on the polynomial module that measures the difference between the holomorphic and anti-holomorphic degrees.
		
		\begin{Remar}\label{Remark:Z0}
			There is some freedom involved in the definition of the central element of $\mathfrak{u}(1,1)$ as for every $\epsilon \in \mathbb{R}$, the linear combination $\tilde{Z}_0 = Z_0+\epsilon\sigma$ (with $\sigma$ as in (\ref{e:sigma})) also commutes with the triple $\{H,E_+,E_-\}$, satisfies $\tilde{Z}_0^*=\tilde{Z}_0$, and, when $c=0$, also restricts to the operator that measures the difference between holomorphic and antiholomorphic degrees.
		\end{Remar}

		We conclude this section with some properties we will use later on. 
		
		\begin{Lemma}\label{Lemma1.2} 
			In $H_c(G,E)$ , for $j,k \in \{1,\dotsc,n\}$, 
			\[
			[ \bar \zeta_j, z_k] = 	[ \bar \zeta_k, z_j],\qquad 
			[ \zeta_j, \bar z_k] = 	[ \zeta_k, \bar z_j],\qquad
			[ \zeta_j, z_k] = 	[ \bar \zeta_k, \bar z_j].
			\]
		\end{Lemma}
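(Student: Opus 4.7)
The plan is to reduce all three identities to the single symmetry relation~\eqref{ijji} from Lemma~\ref{Lemma1.1}, namely $[\xi_j,x_k]=[\xi_k,x_j]$, which holds for any pair of dual orthonormal bases of $E$ and $E^*$. Since $\{\xi_1,\dots,\xi_n,\eta_1,\dots,\eta_n\}$ is itself an orthonormal basis of $E$ with dual basis $\{x_1,\dots,x_n,y_1,\dots,y_n\}$, relation~\eqref{ijji} applies directly to all mixed commutators, giving in particular
\begin{align*}
[\xi_j,x_k] &= [\xi_k,x_j], & [\eta_j,y_k] &= [\eta_k,y_j],\\
[\xi_j,y_k] &= [\eta_k,x_j], & [\eta_j,x_k] &= [\xi_k,y_j].
\end{align*}

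Next I would simply substitute the definitions
\[
\zeta_j = \tfrac12(\xi_j - i\eta_j),\quad \bar\zeta_j = \tfrac12(\xi_j + i\eta_j),\quad z_j = x_j + i y_j,\quad \bar z_j = x_j - i y_j,
\]
expand the commutators $[\bar\zeta_j,z_k]$, $[\zeta_j,\bar z_k]$ and $[\zeta_j,z_k]$ bilinearly into the four real commutators above, and read off that swapping $j\leftrightarrow k$ (or $(j,z_k)\leftrightarrow(k,\bar z_j)$ in the third identity) permutes the four summands into one another by the displayed symmetries. For instance, expanding $[\bar\zeta_j,z_k]$ produces $\tfrac12\bigl([\xi_j,x_k]+i[\xi_j,y_k]+i[\eta_j,x_k]-[\eta_j,y_k]\bigr)$, and the four symmetries above immediately identify this with the analogous expansion of $[\bar\zeta_k,z_j]$. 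The same routine checks the other two identities; the third one, $[\zeta_j,z_k]=[\bar\zeta_k,\bar z_j]$, uses that conjugating $\zeta\leftrightarrow\bar\zeta$ and $z\leftrightarrow\bar z$ flips the sign of the imaginary parts in exactly the combinations that get cancelled by the swap of $j$ and $k$ in the relevant symmetries.

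There is no real obstacle here: everything is a bookkeeping exercise once Lemma~\ref{Lemma1.1} is in hand, and the only thing to be careful about is matching the signs from the factors of $\pm i$ in the definitions of $\zeta_j,\bar\zeta_j$ (and correspondingly $z_j,\bar z_j$) with the pairings dictated by~\eqref{ijji}. In particular, one should note that the symmetry $[\xi_j,y_k]=[\eta_k,x_j]$, which is the cross-term version of~\eqref{ijji}, is what links the $\xi$-parts of the one complex generator to the $\eta$-parts of the other and makes the three claimed identities work out.
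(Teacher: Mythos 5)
Your proposal is correct and follows essentially the same route as the paper: both reduce to the symmetry relation~\eqref{ijji} applied across the full orthonormal basis of $E$, then expand $[\bar\zeta_j,z_k]$, $[\zeta_j,\bar z_k]$ and $[\zeta_j,z_k]$ bilinearly and match the four real commutators term by term. The only cosmetic difference is that the paper pre-symmetrizes the imaginary part (writing $[\eta_j,x_k]$ as $[\xi_k,y_j]$) before reading off the $j\leftrightarrow k$ invariance, whereas you compare the two expansions directly; the content is identical.
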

		\begin{proof} 
			As a consequence of relation~\eqref{ijji} of Lemma~\ref{Lemma1.1}, we have for $j,k \in \{1,\dotsc,n\}$:
			\begin{equation}\label{jkkj}
				[\xi_j,x_k]=[\xi_k,x_j],\quad 
				[\xi_j,y_k]=[\eta_k,x_j],\quad 
				[\eta_j,y_k]=[\eta_k,y_j].
			\end{equation}
			The stated results follow after applying these relations~\eqref{jkkj}
			to
			\begin{align*}
				[ 2\bar \zeta_j, z_k] 	& =	[\xi_j + i \, \eta_j,x_k + i \, y_k]
				=  [\xi_j,x_k ]- [\eta_j,y_k] + i( [\xi_j,y_k] +[\xi_k,y_j ] ),\\
				[ 2\zeta_j, \bar z_k]
				&= [\xi_j - i \, \eta_j,x_k - i \, y_k]
				=  [\xi_j,x_k ]- [\eta_j,y_k] - i( [\xi_j,y_k] +[\xi_k,y_j ] ),
				\\
				[ 2\zeta_j,  z_k]&
				=[\xi_j - i \, \eta_j,x_k + i \, y_k]
				=  [\xi_j,x_k ]+ [\eta_j,y_k] + i( [\xi_j,y_k] -[\xi_k,y_j ] ),
				\\
				[2\bar \zeta_j, \bar z_k] 
				&=[\xi_j + i \, \eta_j,x_k - i \, y_k]
				=  [\xi_j,x_k ]+ [\eta_j,y_k] - i( [\xi_j,y_k] -[\xi_k,y_j ] ).
			\end{align*}
			Note that for $j=k$, the imaginary part of $[\zeta_j,z_j] = [\bar \zeta_j, \bar z_j]$ vanishes. 
		\end{proof}

		\subsection{Scalar decomposition} 
		
		For the $\mathfrak{su}(1,1)$ algebra of Proposition~\ref{sl2}, there is a natural invariance with respect to the finite reflection group $G$. For the $\mathfrak{u}(1,1)$ algebra of Proposition~\ref{u11}, the invariance in question would be under a finite subgroup of the unitary group: $G^J :=G \cap \mathrm{U}(n)$, the subgroup of $G$ consisting of all elements $w\in G$ that commute with the complex structure $J$ in $\End(E)$.
		
		
		\begin{Propo}\label{p:gpinvariance}
			The elements $E_+,E_-,H$ are invariant under the action of the group $G$. 
			The element $Z_0$ is invariant under $G^J$. 
		\end{Propo}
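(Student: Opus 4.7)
The plan is to identify each of the four generators with a manifestly $G$- (resp.~$G^J$-) invariant construction built from the Euclidean form $B$, the identity endomorphism of $E$, the complex structure $J$, and the class function $c$. Since $G$ acts on $H_c(G,E)$ by algebra automorphisms (inherited from the natural action on $E_\bC\oplus E^*_\bC$ together with conjugation on $\bC[G]$, and well-defined on the quotient because the relations \eqref{comm}--\eqref{RC} are $G$-equivariant given the class-function property of $c$), it suffices to check invariance of the defining tensor-algebra expressions.

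For $E_+=\tfrac12\sum_k x_k^2$ and $E_-=-\tfrac12\sum_k \xi_k^2$, I would observe that these are the images in $H_c(G,E)$ of the canonical elements of $S^2(E^*_\bC)$ and $S^2(E_\bC)$ dual to $B$, written in the orthonormal basis. Since $G\subseteq\mathrm{O}(E,\mathbb{R})$ preserves $B$, orthogonality $A^TA = I_{2n}$ of the matrix $A$ representing $w\in G$ gives in one line
\[
w\cdot E_+ \;=\; \tfrac12\sum_k(w\cdot x_k)^2 \;=\; \tfrac12\sum_{j,l}\Big(\sum_k A_{kj}A_{kl}\Big)x_j x_l \;=\; E_+,
\]
and symmetrically for $E_-$. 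For $H=\sum_k x_k\xi_k + n - \sigma$, the Euler-type sum is the canonical image of $\mathrm{id}_E$ under $\End(E_\bC)\cong E^*_\bC\otimes E_\bC\hookrightarrow H_c(G,E)$, so it is $G$-invariant for trivial reasons; $n$ is a scalar; and $\sigma$ is central in $\bC[G]$ by \eqref{e:sigma} together with the conjugation-invariance of $c$. Hence $H$ is $G$-invariant.

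The core of the statement concerns $Z_0$. I would rewrite $i^{-1}Z_0 = \sum_j(x_j\eta_j - y_j\xi_j)$ and examine the underlying tensor
\[
T \;=\; \sum_j(x_j\otimes\eta_j - y_j\otimes\xi_j)\;\in\; E^*_\bC\otimes E_\bC.
\]
Evaluating on the basis $\{\xi_k,\eta_k\}$, the endomorphism corresponding to $T$ under the canonical isomorphism $E^*_\bC\otimes E_\bC\cong \End(E_\bC)$ sends $\xi_k\mapsto\eta_k$ and $\eta_k\mapsto -\xi_k$, i.e.~it equals $-J$. Since that isomorphism is $G$-equivariant when the right-hand side carries the conjugation action, the stabiliser of $T$ (equivalently of $J$) is by definition $G^J$, so $Z_0$ is $G^J$-invariant.

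The whole argument is invariant-theoretic bookkeeping; the only mild subtlety is keeping straight the contragredient action of $G$ on $E^*$, but this is harmless here because for $G\subseteq\mathrm{O}(E)$ the matrix of $w$ on the dual orthonormal basis $\{x_k\}$ coincides with that on $\{\xi_k\}$. Consequently there is no genuine obstacle: once $E_+,E_-,H$ are recognised as built from $B,\,\mathrm{id}_E$ and the class element $\sigma$, and $Z_0$ as built from $J$, the statement reads off directly.
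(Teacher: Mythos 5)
Your proof is correct and follows essentially the same route as the paper's: the paper writes $E_+=\tfrac12\binom{x}{y}^T\binom{x}{y}$ and $Z_0=i\binom{x}{y}^T J\binom{\xi}{\eta}$ and invokes orthogonality $[g]^T[g]=I$ together with the observation that invariance of $Z_0$ amounts to $[g]$ commuting with $J$, which is your argument phrased in explicit matrix notation rather than the coordinate-free tensor/endomorphism language. The identification of the Euler sum with $\mathrm{id}_E$ and of $\sum_j(x_j\otimes\eta_j - y_j\otimes\xi_j)$ with $-J$ under $E^*_\bC\otimes E_\bC\cong\End(E_\bC)$ is a slightly more conceptual packaging of the same computation, but the underlying mathematics is identical.
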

		\begin{proof}
			We use the following notations. Consider the $n\times 1$ column matrices
			\begin{equation}\label{notat}
				z = \begin{pmatrix} z_1  \\ \vdots  \\  z_n\end{pmatrix} , 
				\quad \bar z = \begin{pmatrix} \bar z_1 \\ \vdots  \\  \bar z_n\end{pmatrix},
				\quad \zeta = \begin{pmatrix}  \zeta_1 \\ \vdots  \\  \zeta_n\end{pmatrix},
				\quad x = \begin{pmatrix} x_1 \\ \vdots  \\  x_n\end{pmatrix},
				\quad \xi = \begin{pmatrix} \xi \\ \vdots  \\  \xi_n\end{pmatrix}, 
				\quad \text{etc.}
			\end{equation}
			We have  \[
			E_+  = \frac12 \bar z^T z 
			= 
			\frac12\begin{pmatrix} x  \\ y \end{pmatrix}^T \begin{pmatrix} x  \\ y \end{pmatrix}.
			\]
			For $g \in G$, through its matrix representation $[g]$ wit h respect to our chosen bases of $E$ and $E^*$ we find  
			\[
			g \cdot E_+ 
			= \frac12 \left( [g] \begin{pmatrix} x  \\ y \end{pmatrix}\right)^T [ g] \begin{pmatrix} x  \\ y \end{pmatrix}
			= \frac12 \begin{pmatrix} x  \\ y \end{pmatrix}^T [g]^T [ g] \begin{pmatrix} x  \\ y \end{pmatrix}
			= E_+.
			\]
			In the same way we find that $E_-$ and $H$ are $G$-invariant. However, as
			\[
			Z_0 =   \bar z^T\bar \zeta- z^T \zeta = i \begin{pmatrix} x  \\ y \end{pmatrix}^T J \begin{pmatrix} \xi  \\ \eta \end{pmatrix},
			\]
			this is invariant under the action of $g \in G$ if and only if $g$ commutes with the complex structure $J$, i.e., if $g$ corresponds to an element of $\mathrm{U}(n)$.
		\end{proof}
		
		\begin{Remar}
			It is natural to ask what is the structure of the finite group $G^J$. The case for $G$ a dihedral group, that is, when $n=1$, will be treated in the next section, see Proposition~\ref{p:GJ}. When the complex structure $J$ preserves the root system $R=R(G)$ of $G$, it turns out that $G^J$ has the structure of a complex reflection group of rank $n$. For $n>1$ and $R$ irreducible, this statement is a special case of \cite[Theorem 3.2]{Br}. We list them here for convenience:
			\[
			\begin{aligned}
				G &= G(B_{2n}),n > 1,&\qquad G^J&=G(4,1,n), \\
				G &= G(D_{2n}), n>1,&\qquad G^J&=G(4,2,n), \\
				G &= G(E_8),&\qquad G^J&=G_{31},\\
				G &= G(F_4),&\qquad G^J&=G_{8}\textup{ or }G_{12},\\
				G &= G(H_4),&\qquad G^J&=G_{22}.
			\end{aligned}
			\]
			Note that for
			$F_4$, there are two conjugacy classes of automorphisms of $R(F_4)$ with characteristic polynomial $(x^2 + 1)^2$, so there are
			two inequivalent choices of $J$ preserving that root system.

			For $n>1$ and $R$ reducible, since $J$ is an automorphism of $R$, it induces a permutation on the set of connected components of the Dynkin diagram of $R$. Since moreover $J^2 = -1$, this induced permutation is of order $2$. Hence, $J$ either preserves a connected component of the graph, or it switches two connected components, forcing them to be isomorphic. From this, it follows that $G^J = G_1\times\cdots\times G_p$ in which $G_j$ is a complex reflection group described from the above results on irreducible root systems, if $J$ preserves a component, and it is the diagonal copy of a real reflection subgroup in case $J$ switches two connected components of the graph.
			
		\end{Remar}
		
		We now refine the $\mathfrak{su}(1,1)$ decomposition (\ref{e:sl2decomp}). Since $Z_0$ preserves each $M_c(\tau)_k$ and commutes with the triple $\{ H, E_+, E_-\}$, each of the spaces $\mathcal{H}_c(\tau)_{k}$ is $Z_0$-invariant. Furthermore, for each $k\geq 0$, let $\Sigma_c(\tau)_k$ denote the set
		of eigenvalues of $Z_0$ on $\mathcal{H}_c(\tau)_k$. We have the following:
		
		\begin{Propo}\label{p:ZnaughtProp}
			Assume $c$ is real-valued and sufficiently close to $c=0$ such that $M_c(\tau)$ is irreducible and unitarisable. We have:
			\begin{enumerate}
				\item[(1)] The operator $Z_0$ restricted to $\mathcal{H}_c(\tau)_k$ is diagonalisable for all $k\geq 0$ and $\Sigma_c(\tau)_k\subseteq \bR$.
				\item[(2)] If $\lambda\in\Sigma_c(\tau)_k$ then $-\lambda$ is also in $\Sigma_c(\tau)_k$.
			\end{enumerate}
		\end{Propo}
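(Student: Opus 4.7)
The approach for (1) is to use self-adjointness of $Z_0$. Since $[Z_0,H]=[Z_0,E_-]=0$, the operator $Z_0$ preserves the finite-dimensional space $\mathcal{H}_c(\tau)_k=M_c(\tau)_k\cap\ker E_-$ (finite-dimensional because $V(\tau)$ is). By Theorem~\ref{u11}, $Z_0^*=Z_0$, and under the standing hypothesis on $c$ the form $\beta_{c,\tau}$ is positive-definite on $M_c(\tau)$. So $Z_0|_{\mathcal{H}_c(\tau)_k}$ is a self-adjoint operator on a finite-dimensional complex Hilbert space; the spectral theorem yields (1).

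For (2) my plan is to construct a conjugate-linear bijection of $\mathcal{H}_c(\tau)_k$ that intertwines $Z_0$ with $-Z_0$. The natural candidate comes from the conjugate-linear involution $\theta$ of $H_c(G,E)$ introduced in Section~\ref{s:involutions}. Using $\theta(z_j)=\bar z_j$, $\theta(\zeta_j)=\bar\zeta_j$, and the commutativity relations~\eqref{comm}, a direct computation gives $\theta(Z_0)=-Z_0$ and $\theta(E_-)=E_-$. To lift $\theta$ to an antilinear map $\tilde\theta:M_c(\tau)\to M_c(\tau)$ satisfying $\tilde\theta(h\cdot m)=\theta(h)\cdot\tilde\theta(m)$, I would invoke the classical fact that every irreducible complex representation of a real reflection group is defined over $\bR$, obtaining a conjugate-linear involution $c_\tau$ on $V(\tau)$ commuting with $G$, and set $\tilde\theta=c_E\otimes c_\tau$, where $c_E$ is the coefficient-wise complex conjugation on $\bC[E]=\bC\otimes_{\bR}\bR[E]$. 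The intertwining with Dunkl operators then follows from the fact that, for real parameter $c$, their action involves only real-valued coefficients in the basis from $E\oplus E^*\oplus\bR[G]$.

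With $\tilde\theta$ in hand, the conclusion is routine: $\tilde\theta$ preserves the polynomial grading and the kernel of $E_-$ (as $\theta(E_-)=E_-$), so it restricts to a conjugate-linear involution of $\mathcal{H}_c(\tau)_k$. For $v$ in the $\lambda$-eigenspace of $Z_0$ one has
\[
Z_0\,\tilde\theta(v)=-\theta(Z_0)\,\tilde\theta(v)=-\tilde\theta(Z_0 v)=-\bar\lambda\,\tilde\theta(v)=-\lambda\,\tilde\theta(v),
\]
using part (1) at the last step, so $\tilde\theta$ sends the $\lambda$-eigenspace bijectively onto the $(-\lambda)$-eigenspace, giving (2). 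The main obstacle in executing this plan is the verification that $\tilde\theta$ respects the full Dunkl-operator action on $M_c(\tau)$; this is where both the real-valuedness of $c$ and the realisability of $\tau$ over $\bR$ enter crucially. Once that compatibility check is in place, the rest is bookkeeping.
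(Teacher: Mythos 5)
Your proof is correct and follows essentially the same route as the paper: self-adjointness of $Z_0$ with respect to the positive-definite contravariant form gives part (1), and part (2) comes from a conjugate-linear involution $\theta$ of $M_c(\tau)$ satisfying $\theta\circ Z_0\circ\theta=-Z_0$ combined with realness of the eigenvalues from (1). You spell out the realisability of $V(\tau)$ over $\bR$ (and the resulting conjugate-linear structure $c_\tau$) needed to extend $\theta$ from $\bC[E]$ to the whole standard module, a point the paper leaves implicit.
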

		
		\begin{proof}
			Item (1) follows from the fact that $Z_0$ acts as a Hermitian operator on $\mathcal{H}_c(\tau)_k$, with respect to the unitary structure on $M_c(\tau)$ described in Section~\ref{s:unitary}. Recall that the assumption on $c$ guarantees that the natural Hermitian pairing on $M_c(\tau)$ is in fact positive-definite.
			For item (2), we denote also by $\theta\in \End_\bR(M_c(\tau))$ the complex conjugation involution described in Section~\ref{s:involutions}, which fixes all $x_j\otimes v\in M_c(\tau)_1$, for $v\in V(\tau)$, and satisfies $\theta(z_j\otimes v_k)=\overline{z}_j\otimes v_k$. One readily checks that $\theta\circ Z_0\circ\theta = -Z_0$.  Thus, if $p\in\mathcal{H}_c(\tau)_k$ is an eigenvector with eigenvalue $\lambda$, we obtain that 
			\[
			Z_0(\theta(p)) = \theta(\theta\circ Z_0 \circ \theta(p)) = -\overline{\lambda}\theta(p).
			\]
			Claim (2) now follows from item (1).
		\end{proof}
		
		Because of Proposition \ref{p:ZnaughtProp}(2), we can write  $\Sigma_c(\tau)_k = \Sigma^+_c(\tau)_k \cup \Sigma^-_c(\tau)_k \cup \Sigma^0_c(\tau)_k$ where $\Sigma^\pm_c(\tau)_k$ denotes the set of positive (respectively, negative) eigenvalues in $\Sigma_c(\tau)_k$ and $\Sigma^0_c(\tau)_k=\{0\}$ or $\emptyset$ depending if the eigenvalue zero is present or not. For $\lambda\in \Sigma_c(\tau)_k$, let $\mathcal{H}_c(\tau)_{k,\lambda}$ denote the $\lambda$-eigenspace of the operator $Z_0$ inside $\mathcal{H}_c(\tau)_k$. We can thus decompose the space of harmonics as
		\[
		\mathcal{H}_c(\tau)_k = \mathcal{H}_c(\tau)_{k,0} \oplus
		\bigoplus_{\lambda\in \Sigma^+_c(\tau)_k}
		(\mathcal{H}_c(\tau)_{k,\lambda}\oplus \mathcal{H}_c(\tau)_{k,-\lambda}).
		\]
		Furthermore, if $\mathcal{V}$ denotes the centralizer algebra of $\mathfrak{u}(1,1)$ inside $H_c(G,E)$, then $\mathcal{V}$ is the subalgebra of $\mathcal{U}=\textup{Cent}_{H_c(G,E)}(\mathfrak{su}(1,1))$ that commutes with the central element of $\mathfrak{u}(1,1)$ and we have $G^J\subset \mathcal{V}$. This brings us to the following refinement of the decomposition~\eqref{e:sl2decomp}.
		
		\begin{Cor}
			For real-valued $c$ such that $M_c(\tau)$ is irreducible and unitarisable, we have
			the following joint $(\mathcal{V},\mathfrak{u}(1,1))$-decomposition of the standard module
			\[
			M_c(\tau) = \bigoplus_{k\in \mathbb{Z}_{\geq 0}}\bigoplus_{\lambda\in\Sigma_c(\tau)_k} 
			\mathcal{H}_c(\tau)_{k,\lambda}\otimes L_{\mathfrak{u}(1,1)}(k + n - N_c(\tau),\lambda),
			\]
			where $L_{\mathfrak{u}(1,1)}(\mu,\nu)$ denotes the irreducible lowest weight $\mathfrak{u}(1,1)$-representation of $(H,Z_0)$-lowest weight $(\mu,\nu)$.
		\end{Cor}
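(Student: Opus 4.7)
The plan is to refine the $\mathfrak{su}(1,1)$-decomposition \eqref{e:sl2decomp} by further diagonalizing with respect to the central element $Z_0$ of $\mathfrak{u}(1,1)$. First I would recall that, since $Z_0$ commutes with $H$ and $E_{\pm}$ by Theorem~\ref{u11}, the operator $Z_0$ preserves both the degree-$k$ part $M_c(\tau)_k$ and the kernel $\mathcal{H}_c(\tau)_k$ of $E_-$ inside it, so Proposition~\ref{p:ZnaughtProp} applies and we can split
\[
\mathcal{H}_c(\tau)_k = \bigoplus_{\lambda\in \Sigma_c(\tau)_k} \mathcal{H}_c(\tau)_{k,\lambda}.
\]
Substituting this into \eqref{e:sl2decomp} reorganises the direct sum into one indexed by pairs $(k,\lambda)$.

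Next I would identify each summand $\mathcal{H}_c(\tau)_{k,\lambda}\otimes L_{\mathfrak{su}(1,1)}(k+n-N_c(\tau))$ with an irreducible $\mathfrak{u}(1,1)$-module. Since $N=2n$ in the present set-up, the element $H$ acts on $M_c(\tau)_k$ by the scalar $k+n-N_c(\tau)$, and $Z_0$ acts on $\mathcal{H}_c(\tau)_{k,\lambda}$ by $\lambda$; moreover $E_-$ annihilates $\mathcal{H}_c(\tau)_{k,\lambda}$ by construction. Hence every vector in $\mathcal{H}_c(\tau)_{k,\lambda}$ is a joint lowest weight vector for $(H,Z_0)$ with weight $(k+n-N_c(\tau),\lambda)$, and $\mathfrak{u}(1,1) = \mathfrak{su}(1,1)\oplus \bC Z_0$ acts as $L_{\mathfrak{su}(1,1)}(k+n-N_c(\tau))$ tensored with the one-dimensional $Z_0=\lambda$ character, which is precisely the irreducible lowest weight module $L_{\mathfrak{u}(1,1)}(k+n-N_c(\tau),\lambda)$. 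Irreducibility over $\mathfrak{u}(1,1)$ follows from the already-established irreducibility of the $\mathfrak{su}(1,1)$ factor together with the scalar action of the central $Z_0$.

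Finally, I would explain the $\mathcal{V}$-module structure on the multiplicity spaces. Because $\mathcal{V} = \textup{Cent}_{H_c(G,E)}(\mathfrak{u}(1,1)) \subseteq \mathcal{U}$, it preserves both $\mathcal{H}_c(\tau)_k$ (already known from \cite{CDM}) and the $Z_0$-eigenspace decomposition (since elements of $\mathcal{V}$ commute with $Z_0$). Consequently each $\mathcal{H}_c(\tau)_{k,\lambda}$ is a $\mathcal{V}$-submodule, and under $\mathcal{V}\otimes U(\mathfrak{u}(1,1))$ the standard module decomposes as claimed. Positivity of the contravariant form ensures that eigenspaces of distinct eigenvalues of the Hermitian operator $Z_0$ are orthogonal, so the sum is genuine and no cross-terms appear.

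The main obstacle is essentially bookkeeping: one must be sure that diagonalizability of $Z_0$ on the (a priori infinite-dimensional) module is governed entirely by its diagonalizability on the finite-dimensional lowest weight spaces $\mathcal{H}_c(\tau)_k$. This is handled by noting that $\mathcal{H}_c(\tau)_k$ is finite-dimensional, that the $\mathfrak{su}(1,1)$-decomposition \eqref{e:sl2decomp} places every vector of $M_c(\tau)$ in the $U(\mathfrak{su}(1,1))$-span of such a lowest weight space, and that $Z_0$ commutes with this $U(\mathfrak{su}(1,1))$-action; thus the joint diagonalization on the lowest weight spaces propagates to the full module.
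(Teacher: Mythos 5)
Your proof is correct and follows the same route the paper intends: the corollary is stated in the text as a direct consequence of Proposition~\ref{p:ZnaughtProp} and the preceding discussion, and your proposal simply fills in the routine steps (diagonalizing $Z_0$ on the finite-dimensional lowest-weight spaces, identifying the $Z_0$-eigenspaces of $\mathcal{H}_c(\tau)_k$ as $\mathcal{V}$-modules since $\mathcal{V}\subseteq\mathcal{U}$ commutes with $Z_0$, and tensoring the scalar $Z_0=\lambda$ action onto the already-irreducible $\mathfrak{su}(1,1)$-module). Your remarks on orthogonality of distinct eigenspaces and on propagating diagonalizability from $\mathcal{H}_c(\tau)_k$ to all of $M_c(\tau)$ via the $U(\mathfrak{su}(1,1))$-action are accurate and match the paper's implicit reasoning.
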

		
		We note that finding explicit formulas for the eigenvalues of $Z_0$, or for the generators of the centralizer algebra $\mathcal{V}$, are non-trivial problems for general reflection group $G$ and dimension $n$. From computations, it is observed that when using a slightly different central element for $\mathfrak{u}(1,1)$, as noted in Remark~\ref{Remark:Z0}, the expressions for its eigenvalues are sometimes simpler. 
		The properties of Proposition~\ref{p:ZnaughtProp} remain valid for a central element of the form  $\tilde{Z}_0 = Z_0+\epsilon\sigma$, though the involution linking the eigenvectors corresponding to two eigenvalues with opposite sign will be more involved than just complex conjugation. 
		Central elements of this form will prove to be useful in Section~\ref{Section:CliffordDihedral}. Next, we consider an example where it is feasible to compute the eigenvalues and centraliser explicitly.

		\section{Example: dihedral groups} \label{Section:dihedral}
		
		\subsection{Preliminaries}

		We consider in particular the case $n=1$, which remains fixed for this section. 
		Within this section, we will use the letter $n$ as an (integer) index or variable without connection to the dimension of $E$ or the number of complex variables. 
		We will use the letter $m$ to refer to the number of reflections of
		a dihedral group, so the order of the group is $2m$. 
		In this case, the group $G^J$ is a cyclic group. 
		
		\begin{Propo}\label{p:GJ}
			If $G = I_2(m)$ is a dihedral group acting on $E=\mathbb{R}^2$ and $J\in O(E)$ is a complex structure, then $G^J = C_m$, the cyclic group of order $m$.
		\end{Propo}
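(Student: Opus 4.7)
The plan is to unpack what $G^J$ means in this low-dimensional case and use elementary facts about $O(2,\mathbb{R})$.

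First, I would identify $J$ inside $O(E)=O(2,\mathbb{R})$. Since $J$ satisfies $J^T=-J$ and $J^2=-I_E$, it has no real eigenvalues and $\det(J)=1$, so $J\in SO(2)$. Up to the choice of basis giving the block form recalled in Section~\ref{s:ComplexStructure}, $J$ is the rotation by $\pi/2$; in any case $J$ is a nontrivial rotation (of order $4$).

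Next I would compute $\mathrm{Cent}_{O(2)}(J)$. Any rotation in $SO(2)$ commutes with $J$ because $SO(2)$ is abelian. Conversely, if $r\in O(2)\setminus SO(2)$ is a reflection, a short computation gives $rJr^{-1}=J^{-1}=-J\neq J$, so $r$ does not commute with $J$. Hence $\mathrm{Cent}_{O(2)}(J)=SO(2)$, which under the identifications of Section~\ref{s:ComplexStructure} is exactly $U(1)$. Therefore
\[
G^J = G\cap U(1) = G\cap SO(2,\mathbb{R}).
\]

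Finally, for $G=I_2(m)$ acting on $E=\mathbb{R}^2$, the standard description of the dihedral group of order $2m$ says that its rotation subgroup is the cyclic group $C_m$ generated by the rotation of angle $2\pi/m$, and the remaining $m$ elements are reflections across lines through the origin. Intersecting with $SO(2)$ thus yields exactly $C_m$, proving the claim.

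There is no real obstacle here; the only thing to be careful about is verifying that a reflection in $O(2)$ cannot commute with $J$, which is immediate from the fact that conjugation by a reflection inverts every rotation.
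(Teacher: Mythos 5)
Your proof is correct and follows essentially the same route as the paper: both arguments reduce to showing $G^J = G\cap SO(2)$ and then identifying the rotation subgroup of $I_2(m)$ with $C_m$. The only cosmetic difference is that the paper cites the general fact that any $g\in O(E)$ commuting with a compatible complex structure must have $\det(g)=1$, whereas you verify this directly in $O(2)$ by checking that a reflection conjugates $J$ to $-J$; both are fine.
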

		
		\begin{proof}
			It is a general fact that if $g\in O(E)$ commutes with a compatible complex structure $J\in O(E)$, then it must be that $\det(g)=1$. So, in general, we have $G^J\subset G\cap SO(E)$. Since in the present case $SO(E)$ is abelian and $J\in SO(E)$, it follows that
			$G^J = G\cap SO(E)$. This finishes the proof.
		\end{proof}
		
		We shall determine the scalar decomposition of the modules $M_c(\tau)$ when $E=\bR^2$ and $G=I_2(m)$ is a dihedral group and compute explicitly the eigenvalues of the central element of $\mathfrak{u}(1,1)$ on the space of harmonics, $\mathcal{H}_c(\tau)$. The harmonics of the polynomial module (when $\tau=\triv$) were studied by Dunkl in \cite[Section 3]{Du}. As $E$ is only two-dimensional, we omit the subscripts and use the notation $\xi,\eta\in E$ for an orthonormal basis and $x,y\in E^*$ for its dual basis. The action of $G$ on $E$ is chosen such that, in terms of the orthonormal basis, the root system of $G$ is
		$R(m) = \{\alpha_1,\ldots,\alpha_{2m}\}\subseteq E,$
		where for each $1 \leq j \leq 2m$ we have
		\begin{equation}\label{rootsdihedral}
			\alpha_j := 
			\sin(\pi j/m)\xi - \cos(\pi j/m)\eta.
		\end{equation}
		The positive roots are $\{\alpha_j\mid1\leq j \leq m\}$. Let also $a_j = \sin(\pi j/m)x - \cos(\pi j/m)y$, from which $\langle a_j,\alpha_j \rangle = 1$. Label the reflections as $s_1,s_2,\ldots,s_{m}$ with $s_j = s_{\alpha_j}$. We note that $s_m$ is the reflection that fixes the $x$-axis when acting on $E^*$. 
		When $m$ is even there are two conjugacy classes of reflections, one corresponding to $s_j$ for $j$ even, and one to $s_j$ for $j$ odd. In this case, the parameter function will be denoted $c_{\bar 0} = c(s_j)$ for $j$ even, and $c_{\bar 1} = c(s_j)$ for $j$ odd. 
		When $m$ is odd there is only one conjugacy class of reflections, so the parameter function can be replaced by a single constant $c= c(s_j)$ for all $j$. For a fixed $m$, define $\omega := \exp(2\pi i/m)$ a primitive $m$-th root of $1$ and, for $n\in \bZ$, define the elements
		\[
		\sigma(n) := \sum_{j=1}^{m}c(s_j)\omega^{nj} s_j \in H_c(G,E),
		\]
		and let $\sigma_n = \sum_{j=1}^{m}c(s_j)\omega^{nj}$ denote the scalar by which the element
		$\sigma(n)$ acts on trivial representation of $G$. Note that $\sigma(n) = \sigma(q)$ if $n \equiv q$ modulo $m$, and that we can write 
		$\sigma_n = \sigma_{n,\bar{0}}+\sigma_{n,\bar{1}}\in\bC$ where $\bar u$ is the residue modulo $2$ of $u\in \bZ$ and
		\begin{equation}\label{e:omegas}
			\sigma_{n,\bar{u}} := \sum_{\bar j = \bar u}c(s_j)\omega^{nj}
		\end{equation}
		with the summation running over $j$, from $1$ to $m$, such that $\bar j = \bar u$.
		
		\begin{Lemma}\label{p:sigmas}
			If $m$ is odd, we have $\sigma_n=0$ except if $n \equiv 0 \mod m$, in which case $\sigma_0 = mc$. If $m=2q$ is even, we have $\sigma_n=0$ except if $n \equiv 0$ or $n \equiv q \mod m$, where we have $\sigma_0 = q(c_{\bar 0} + c_{\bar 1})$ and $\sigma_q = q(c_{\bar 0} - c_{\bar 1})$. In particular, $\sigma_n \in \bR$ when $c$ is real-valued.
		\end{Lemma}
		
		\begin{proof}
			When $m$ is odd, the parameter function is constant, and for $n \equiv 0 \mod m$, the result is immediate from $\omega^m=1$. For other values of $n$, the claim follows from the elementary polynomial factorization $X^m-1 = (X-1)(\sum_{j=0}^{m-1}X^{j})$ for $X= \omega^n$. When $m=2q$ is even, we have
			\[
			\sigma_n = \sum_{j=1}^q(c_{\bar 0}(\omega^{n})^{2j} + c_{\bar 1}(\omega^{n})^{2j-1}) = (c_{\bar 0} + c_{\bar 1}\omega^{-n})  \sum_{j=1}^q(\omega^{2n})^{j}.
			\]
			For $n \equiv 0$ or $n \equiv q \mod m$, one has $ \omega^{2n}=1$ so the result follows using $\omega^{-q}=-1$.
			For other values of $n$, we have $\sum_{j=0}^{q-1}\omega^{2nj} =  ((\omega^{2n})^q-1)/(\omega^{2n}-1) =0$.
		\end{proof}
		
		\subsection{Representations}
		
		We now describe the decomposition of $M_c(\tau)$ for the simple modules of $\mathbb{C}[G]$. Recall that such a $\tau$ is obtained from a one- or two-dimensional representation of $G$.
		When $m$ is even, there are four one-dimensional representations of $G$, the trivial, the sign and two other which we shall denote by $\chi_{\bar 0},\chi_{\bar 1}$ and are characterized by $\chi_{\bar u}(s_k) = -1$ if $\bar u = \bar k$ and is $1$ otherwise. Using the notation in (\ref{e:omegas}), it follows that
		\begin{align*}
			\triv(\sigma(n)) &= \sigma_{n,\bar 0} + \sigma_{n,\bar 1} = \sigma_n\\
			\chi_{\bar 0}(\sigma(n)) &= -\sigma_{n,\bar 0} + \sigma_{n,\bar 1} \\
			\chi_{\bar 1}(\sigma(n)) &= \sigma_{n,\bar 0} - \sigma_{n,\bar 1} \\
			\sign(\sigma(n)) &= -\sigma_{n,\bar 0} - \sigma_{n,\bar 1} = -\sigma_n.
		\end{align*}
		When $m$ is odd, there are only two one-dimensional representations, the trivial and the sign. If $\chi$ denotes a one-dimensional representation of $G$, when $n=0$, we denote $\chi(\sigma(0)) = N_c(\chi)$. From Lemma~\ref{p:sigmas}, for real-valued $c$, we have $\chi(\sigma(n))\in \bR$ for  all $n\in \bZ$ and \begin{equation}\label{e:chisigma}	
			\chi(\sigma(n))=\chi(\sigma(-n)).
		\end{equation}

		If we write $m=2q$ when $m$ is even and $m=2q-1$ when it is odd, there are, in both cases, $(q-1)$ two-dimensional representations labelled by $\rho^{(u)}$, with $u=1,\ldots, q-1$ and which can be realized in $E_\bC$ via the formulas
		\[
		\rho^{(u)}(s_j)(z) = \omega^{ju}\oz, \qquad\qquad \rho^{(u)}(s_j)(\oz) = \omega^{-ju}z.
		\]
		Of course, $\rho^{(1)}$ is just the reflection representation.

		For $\tau$ an irreducible representation of $G$ and $k \in \mathbb{Z}$, denote $\sigma(\tau)_k$ the set with, depending on $\tau$, the following elements (using the notation~\eqref{e:omegas})
		\[
		\begin{array}{l|l}
			\tau & \sigma(\tau)_k\\\hline
			\textup{triv} & \{\sigma_{k,\bar 0} + \sigma_{k,\bar 1}\} =\{ \sigma_k\}\\
			\chi_{\bar 0} & \{-\sigma_{k,\bar 0} + \sigma_{k,\bar 1}\} \\
			\chi_{\bar 1} & \{\sigma_{k,\bar 0} - \sigma_{k,\bar 1}\} \\
			\textup{sign} & \{-\sigma_{k,\bar 0} - \sigma_{k,\bar 1}\} =\{ -\sigma_k\} \\
			\rho^{(u)} &  \{ \sigma_{k+u}, \sigma_{k-u} \}
		\end{array}
		\]
		Moreover, in $V(\tau)$, the representation space of $\tau$, for $\sigma_{\tau,k} \in \sigma(\tau)_k$, the notations $z(\sigma_{\tau,k})$ and $\bar z(\sigma_{\tau,k})$ will both refer to the single basis vector of $V(\tau)$ when $\tau$ is one-dimensional. For two-dimensional $\tau = \rho^{(u)}$, with $z,\bar z$ as basis for $V(\tau)$, we set $z(\sigma_{k+u}) := z$ and $\bar z(\sigma_{k+u}):= \bar z$, while $z(\sigma_{k-u}) := \bar z$ and $\bar z(\sigma_{k-u}):= z$.

		\begin{Thm}\label{t:eigenvaluesdihedral}
			Let $\tau$ be an irreducible representation of $G$ and $c$ real-valued and close enough to $c=0$ such that $\pi:H_c(G,E)\to \End(M_c(\tau))$ is irreducible and unitarisable. For $k > 0$, the eigenvalues of $Z_0 + \epsilon \sigma$ on the space of polynomials of homogeneous degree $k$ are
			\begin{equation}\label{e:eigenvaluesdihedral}
				\lambda(\sigma_{\tau,k})^{\pm} = \pm\sqrt{(k- N_c(\tau))^2 -(1-\epsilon^2)\sigma_{\tau,k}^2}\rlap{\,,} \qquad \sigma_{\tau,k} \in \sigma(\tau)_k \rlap{\,.}
			\end{equation}
			The corresponding eigenvectors inside the space of harmonics $\mathcal{H}_c(\tau)$, and hence a basis for $\mathcal{H}_c(\tau)_k$,
			are 
			\begin{equation}\label{e:eigenvectorsdihedral}
				\begin{split}
					h(\epsilon,\sigma_{\tau,k})^{+} &:= \Proj_k\left[ ( k - N_c(\tau) + \lambda(\sigma_{\tau,k})^{+}) \bar z^k \otimes \bar  z(\sigma_{\tau,k}) - \sigma_{\tau,k} (1-\epsilon)  z^k \otimes z(\sigma_{\tau,k})  \right] \\
					h(\epsilon,\sigma_{\tau,k})^{-} &:= \Proj_k\left[ ( k - N_c(\tau) - \lambda(\sigma_{\tau,k})^{-}) z^k \otimes  z(\sigma_{\tau,k}) - \sigma_{\tau,k} (1+\epsilon) \bar z^k \otimes \bar  z(\sigma_{\tau,k}) \right]       
				\end{split}
			\end{equation}
			where the projection onto $\ker E_-$ is given by
			\begin{equation}\label{e:Proj}
				\Proj_k \colon p \mapsto \sum_{j=0}^{\lfloor k/2\rfloor} \frac{(-1)^j }{j! (N_c(\tau) -k+1)_j} E_+^j E_-^j p\,,
			\end{equation}
			using the notation 
			$(a)_j=a(a+1)\cdots(a+j-1)$ for $j\in\mathbb{Z}_{>0}$ and $(a)_0=1$.
			
			When the polynomial degree is 0, for $\tau$ one-dimensional, we have $\epsilon$ times the element of the singleton $\sigma(\tau)_0$ as eigenvalue for $Z_0 + \epsilon \sigma$. For $\tau=\rho^{(u)}$, the eigenvalue is $0$. In both cases, the eigenvectors are given by the representation space $1\otimes V(\tau)$.
			
			All other eigenvectors of $Z_0 + \epsilon \sigma$ are generated through action of $E_+$.
		\end{Thm}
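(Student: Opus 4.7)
The plan is to reduce the eigenvalue problem for $Z_0+\epsilon\sigma$ on $\mathcal{H}_c(\tau)_k$ to the diagonalisation of a $2\times 2$ matrix, one per eigenvalue $\sigma_{\tau,k}\in\sigma(\tau)_k$. First, using the explicit roots \eqref{rootsdihedral} together with \eqref{RC}, a direct computation in the complex basis yields
\[
[\zeta,z]=[\bar\zeta,\bar z]=1-\sigma(0),\qquad [\bar\zeta,z]=\sigma(1),\qquad [\zeta,\bar z]=\sigma(-1).
\]
Combining these with $s_l\cdot z=\omega^l\bar z$ and $s_l\cdot\bar z=\omega^{-l}z$ produces the twisting rules $\sigma(n)z=\bar z\,\sigma(n+1)$ and $\sigma(n)\bar z=z\,\sigma(n-1)$, from which an easy induction yields
\[
[\zeta,z^k]=k z^{k-1}-\sum_{j=0}^{k-1}z^j\bar z^{k-1-j}\sigma(k-1-j),
\]
together with the analogous expressions for $[\bar\zeta,z^k]$, $[\zeta,\bar z^k]$, and $[\bar\zeta,\bar z^k]$.

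Next, the projection formula \eqref{e:Proj} is obtained from elementary $\mathfrak{su}(1,1)$-theory: since $H$ acts on $M_c(\tau)_k$ as the scalar $k+1-N_c(\tau)$, iterated application of $[E_+,E_-]=H$ shows that the displayed series delivers the unique element of $p+\mathrm{Im}(E_+)$ annihilated by $E_-$. Because $\zeta,\bar\zeta$ kill $V(\tau)$, writing $Z_0=\bar z\bar\zeta-z\zeta$ gives $Z_0(z^k\otimes v)=\bar z[\bar\zeta,z^k]v-z[\zeta,z^k]v$. Substituting the Step~1 formulas, every term of the form $z^a\bar z^b\,\sigma(n)v$ with $a,b\geq 1$ contains a factor of $z\bar z=2E_+$ and therefore lies in $\ker\Proj_k$, so that modulo $\mathrm{Im}(E_+)$
\[
Z_0(z^k\otimes v)\equiv \bar z^k\otimes\sigma(k)v-(k-N_c(\tau))\,z^k\otimes v,
\]
and symmetrically $Z_0(\bar z^k\otimes v)\equiv(k-N_c(\tau))\,\bar z^k\otimes v-z^k\otimes\sigma(-k)v$. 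A direct group-algebra computation further yields $\sigma(z^k\otimes v)=\bar z^k\otimes\sigma(k)v$ and $\sigma(\bar z^k\otimes v)=z^k\otimes\sigma(-k)v$.

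The labeling $z(\sigma_{\tau,k}),\bar z(\sigma_{\tau,k})$ in the theorem is calibrated exactly so that $\sigma(k)\,z(\sigma_{\tau,k})=\sigma_{\tau,k}\,\bar z(\sigma_{\tau,k})$, and, using $\sigma_{-n}=\sigma_n$ for real $c$ (Proposition~\ref{p:sigmas}), $\sigma(-k)\,\bar z(\sigma_{\tau,k})=\sigma_{\tau,k}\,z(\sigma_{\tau,k})$. Consequently, on $\mathrm{span}\{z^k\otimes z(\sigma_{\tau,k}),\,\bar z^k\otimes\bar z(\sigma_{\tau,k})\}$ modulo $\mathrm{Im}(E_+)$, the operator $Z_0+\epsilon\sigma$ acts as the matrix
\[
\begin{pmatrix} -(k-N_c(\tau)) & -(1-\epsilon)\sigma_{\tau,k} \\ (1+\epsilon)\sigma_{\tau,k} & k-N_c(\tau) \end{pmatrix},
\]
whose eigenvalues $\lambda^{\pm}=\pm\sqrt{(k-N_c(\tau))^2-(1-\epsilon^2)\sigma_{\tau,k}^2}$ and corresponding eigenvectors give, after applying $\Proj_k$, exactly the formulas \eqref{e:eigenvaluesdihedral} and \eqref{e:eigenvectorsdihedral}. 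Dimension counts close the argument ($\dim\mathcal{H}_c(\tau)_k=2$ for one-dimensional $\tau$, with $|\sigma(\tau)_k|=1$; $\dim\mathcal{H}_c(\tau)_k=4$ for $\tau=\rho^{(u)}$, with $|\sigma(\rho^{(u)})_k|=2$), and the degree-$0$ case is immediate since $\zeta,\bar\zeta$ vanish on $V(\tau)$. The main obstacle is the combinatorial bookkeeping: verifying carefully that every cross monomial in the iterated commutators indeed pulls out a factor of $z\bar z$, and, most delicately for $\tau=\rho^{(u)}$, checking that the chosen labeling simultaneously diagonalises $\sigma(k)$ and $\sigma(-k)$ with the common eigenvalue $\sigma_{\tau,k}$, so that the reduction to a genuine $2\times 2$ problem is valid.
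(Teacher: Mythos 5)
Your proposal is correct and follows essentially the same route as the paper's proof (Propositions \ref{p:eigenvaluesdihedralchar} and \ref{p:eigenvaluesdihedralrefl}, via Lemma \ref{l:dihprelim} and Proposition \ref{p:adZ0}): commute $Z_0$ past powers of $z,\bar z$, read off the action modulo $\mathrm{Im}(E_+)$ as a small matrix, diagonalise, and lift to $\ker E_-$ via $\Proj_k$. The only packaging difference is that you use the ``calibration'' $\sigma(k)\,z(\sigma_{\tau,k})=\sigma_{\tau,k}\,\bar z(\sigma_{\tau,k})$ to state a single $2\times 2$ reduction covering both one- and two-dimensional $\tau$, whereas the paper writes the two-dimensional case as a $4\times 4$ matrix and observes that it block-diagonalises because $\sigma_u=0$; these are the same observation.
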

		\begin{proof}
			The proof of Theorem~\ref{t:eigenvaluesdihedral} is the subject of Section~\ref{s:proof}. We will first discuss some remarks and consequences. 
		\end{proof}
		
		\begin{Remar}\label{r:harmDunkl}
			An interesting property of the eigenvectors~\eqref{e:eigenvectorsdihedral} is that for $\epsilon=-1$, the eigenvector $h(-,\sigma_{\tau,k})^{+}$ is in the kernel of $\zeta$, while for $\epsilon=1$, the eigenvector $h(+,\sigma_{\tau,k})^{-}$ is in the kernel of $\bar\zeta$ (this is a consequence of the results in Section~\ref{Section:CliffordDihedral}). For $\tau = \triv$, these are precisely the harmonic polynomials determined by Dunkl in~\cite[Section~3]{Du}. There it was noted that these kernels need not be orthogonal, which is observable immediately from the formulas here, orthogonality of the kernels holding only when $\sigma$ acts by $0$. 
		\end{Remar}
		
		In the classical non-deformed case, with the full $\mathrm{GL}(2)$-action on the Weyl algebra associated with $E=\bR^2$, the centraliser of the Lie subalgebra $\mathfrak{u}(1,1)$ of $\mathrm{U}(1)$-invariants (basically the one from Proposition~\ref{u11} with $c=0$) is generated by $Z_0$, which forms a realisation of the Lie algebra $\mathfrak{u}(1)$. In this low-dimensional case, the structure of the centraliser is preserved under the deformation. 
		
		\begin{Propo}\label{p:cent}
			For $\mathfrak{u}(1,1)$ with central element $\tilde{Z}_0 =Z_0 + \epsilon\sigma$,  
			the centraliser $\Cent_{H_c(G,E)}(\mathfrak{u}(1,1))$ is generated by the central element of $\mathfrak{u}(1,1)$ and $\mathbb{C}[G^J]$.
			
		\end{Propo}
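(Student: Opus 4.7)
The starting point is the inclusion $\mathcal{V}\subseteq\mathcal{U}$ since $\mathfrak{su}(1,1)\subset\mathfrak{u}(1,1)$. In the two-dimensional case, the generators $X_{x_j\xi_k}=x_j\xi_k-x_k\xi_j$ of $\mathcal{U}$ from~\eqref{e:X} vanish for $j=k$ and, for $j\neq k$, coincide up to sign with a scalar multiple of $Z_0$. Hence $\mathcal{U}$ is generated by $\mathbb{C}[G]$ and $Z_0$, and using the triangular decomposition of $H_c(G,E)$ together with the weight-zero and $\mathfrak{su}(1,1)$-invariance conditions characterising $\mathcal{U}$, one checks that $\{Z_0^n g : n\in\mathbb{Z}_{\geq 0},\, g\in G\}$ is a linear basis of $\mathcal{U}$.

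The key relations inside $\mathcal{U}$ are $gZ_0=\epsilon_g Z_0\,g$, with $\epsilon_g=+1$ if $g\in G^J$ and $\epsilon_g=-1$ otherwise (by Proposition~\ref{p:gpinvariance}, since the elements of $G\setminus G^J$ are the reflections, which are orientation-reversing) and, summing over reflections, the anticommutation $Z_0\sigma+\sigma Z_0=0$. It follows that $Z_0^2$ commutes with $\sigma$ and that $\sigma^2\in\mathbb{C}[G^J]$ (a product of two reflections is a rotation), so $\tilde Z_0^2=Z_0^2+\epsilon^2\sigma^2$ exhibits $Z_0^2$ as an element of $\mathbb{C}[\tilde Z_0]\cdot\mathbb{C}[G^J]$.

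I would then equip $\mathcal{U}$ with the $\mathbb{Z}/2$-grading in which $Z_0^n g$ is declared even if either $n$ is even and $g\in G^J$, or $n$ is odd and $g\notin G^J$, and odd otherwise. Both $Z_0$ and $\sigma$ are odd, hence $\tilde Z_0$ is odd and $[\,\cdot\,,\tilde Z_0]$ reverses the grading. Writing $\alpha\in\mathcal{U}$ as $\alpha=\alpha_{++}+\alpha_{+-}+\alpha_{-+}+\alpha_{--}$ according to the parity of $n$ and whether $g\in G^J$, a direct computation using the relations above shows that $\alpha_{++}$ is always centralising; that the two components of $[\alpha_{--},\tilde Z_0]$ land in disjoint blocks, forcing $\alpha_{--}=0$; and that $[\alpha_{+-}+\alpha_{-+},\tilde Z_0]=0$ is a linear coupling which uniquely determines $\alpha_{+-}$ from $\alpha_{-+}$.

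Finally, one repackages these pieces in terms of $\tilde Z_0$: the even part is $\mathcal{U}_{++}=\mathbb{C}[Z_0^2]\cdot\mathbb{C}[G^J]=\mathbb{C}[\tilde Z_0^2]\cdot\mathbb{C}[G^J]$, and the odd-parity coupling can be recast, via the identity $Z_0^k+\epsilon\sigma Z_0^{k-1}=\tilde Z_0\, Z_0^{k-1}$ for $k$ odd, as asserting that each basis element of the odd part of $\mathcal{V}$ has the form $\tilde Z_0\cdot Z_0^{2n} g_0$ for some $g_0\in G^J$. Both sorts of elements lie in $\mathbb{C}[\tilde Z_0]\cdot\mathbb{C}[G^J]$, yielding $\mathcal{V}\subseteq\mathbb{C}[\tilde Z_0]\cdot\mathbb{C}[G^J]$; the reverse inclusion is immediate since $\tilde Z_0$ is central in $\mathfrak{u}(1,1)$ while $\mathbb{C}[G^J]$ centralises $H,E_\pm$ by Proposition~\ref{p:gpinvariance} and centralises $\tilde Z_0$ because $G^J$ centralises both $Z_0$ and $\sigma$. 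The principal subtlety is verifying the basis statement of the first paragraph, as opposed to only the spanning statement.
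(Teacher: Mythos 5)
Your proof is correct and follows the same overall strategy as the paper: reduce to the result from \cite{CDM} that $\mathcal{U}=\Cent_{H_c(G,E)}(\mathfrak{su}(1,1))$ is generated by $\mathbb{C}[G]$ and $Z_0$, identify the basis $\{Z_0^n g\}$, and then determine which linear combinations commute with $\tilde Z_0$. Both your use of the sign rule $g Z_0 = \pm Z_0 g$ and the identification of $\mathbb{C}[G^J]$ with the $+$ part are the right moves, and the four--block argument with $\alpha_{++},\alpha_{+-},\alpha_{-+},\alpha_{--}$ closes up consistently.

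Where your proof and the paper's diverge is in the level of machinery and one hidden hypothesis. The paper expresses an arbitrary element of $\mathcal{U}$ as a polynomial $\alpha=\sum_n\tilde Z_0^n g_n$ in $\tilde Z_0$ (not $Z_0$) with $g_n\in\mathbb{C}[G]$; then $[\tilde Z_0,\alpha]=\sum_n\tilde Z_0^n[\tilde Z_0,g_n]$ immediately, and since $\sigma$ is $G$-conjugation-invariant (hence central in $\mathbb{C}[G]$) one gets $[\tilde Z_0,g_n]=[Z_0,g_n]=2Z_0\,g_n^-$, with $g_n^-$ the component of $g_n$ supported on the reflections. A leading-term argument in $Z_0$-degree kills each $g_n^-$. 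This sidesteps the $\mathbb{Z}/2$-grading and the four-block bookkeeping entirely. Your approach works, but be aware that your final ``repackaging'' step
silently uses the same centrality: the coupling $g_m=\tfrac{\epsilon}{2}(h_m\sigma+\sigma h_m)$ simplifies to $g_m=\epsilon\sigma h_m$, and hence the odd part to $\tilde Z_0\,Z_0^{2m}h_m$, only because $\sigma h_m=h_m\sigma$ for $h_m\in\mathbb{C}[G^J]$. This does \emph{not} follow from the commutativity of $\mathbb{C}[G^J]$ (note $\sigma\notin\mathbb{C}[G^J]$); it holds because $\sigma$ is a conjugation-invariant combination of reflections, hence central in $\mathbb{C}[G]$. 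The paper records exactly this point with the clause ``noting that $\sigma$ is $G$-invariant''; you should state it explicitly, as otherwise the repackaging would not land in $\mathbb{C}[\tilde Z_0]\cdot\mathbb{C}[G^J]$.

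Finally, the ``principal subtlety'' you flag at the end --- that $\{Z_0^n g\}$ is a basis, not merely a spanning set --- is handled by the PBW filtration of $H_c(G,E)$: $Z_0^n g$ has distinct $Z_0$-degree leading terms, so linear independence is automatic, and spanning follows from $gZ_0=\pm Z_0 g$. This is the same observation needed to make the paper's polynomial-in-$\tilde Z_0$ expression unique (the change of basis $Z_0^n g\leftrightarrow\tilde Z_0^n g$ is unitriangular).
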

		\begin{proof}
			It was obtained in~\cite{CDM}, that $\mathcal{U}\subset H_c(G,E)$, the centralizer algebra of the $\mathfrak{su}(1,1)$-subalgebra of $H_c(G,E)$, is generated by $G$ and the elements of the form~\eqref{e:X}. 
			For $E=\bR^2$, with basis $\xi,\eta\in E$ and dual basis $x,y\in E^*$, there is only a single non-zero element of this form, namely $x\eta -y\xi = -i Z_0 $. A general element of $\mathcal{U}$ is hence a polynomial in $\tilde{Z}_0 =Z_0 + \epsilon\sigma$ with coefficients in $\mathbb{C}[G]$. In light of Proposition~\ref{p:gpinvariance}, and noting that $\sigma$ is $G$-invariant, the desired result follows. 
		\end{proof}
		
		Using the previous result, we have the following explicit decomposition formulas, where the Lie algebra $\mathfrak{u}(1,1)$ has $Z_0+ \epsilon \sigma$ as central element for $\epsilon \in \mathbb{R}$. Let $\mathcal V = \Cent_{H_c(G,E)}(\mathfrak{u}(1,1))$. 
		
		\begin{Cor}
			Let
			$\chi$ be a one-dimensional irreducible representation of $G$. 
			For $k \in \bZ_{>0}$, denote by $\sigma_{\tau,k}$ the sole element of the set $\sigma(\tau)_k$, and  let $\lambda_0 := \epsilon\sigma_{\tau,0}$ while for $k \in \bZ_{\neq 0}$, 
			$\lambda_k := \lambda(\sigma_{\tau,|k|})^{\sign(k)}$ as defined in~\eqref{e:eigenvaluesdihedral}. 
			Let $\mathcal{H}_c(\chi)_{\lambda_0} = \bC_\chi$ be the representation space of $\chi$ and $\mathcal{H}_c(\chi)_{\lambda}$ denote the $\lambda$-eigenspace of $Z_0+ \epsilon \sigma$ in $\ker E_-$. 
			
			For $c$ as in Theorem~\ref{t:eigenvaluesdihedral}, the joint $(\mathcal V,\mathfrak{u}(1,1))$-decomposition of $M_c(\chi)$ is
			\[
			M_c(\chi) = 
			\bigoplus_{k \in \bZ}
			\mathcal{H}_c(\chi)_{\lambda_k}\otimes L_{\mathfrak{u}(1,1)}(|k|+1-N_c(\chi),\lambda_k)
			.
			\]
			Each of the spaces $\mathcal{H}_c(\chi)_{\lambda_k}$ is a one-dimensional irreducible module for $\mathcal V$.
		\end{Cor}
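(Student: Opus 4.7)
The plan is to specialise to one-dimensional $\chi$ the general $(\mathcal{V},\mathfrak{u}(1,1))$-decomposition obtained in the Corollary preceding the statement, using the explicit spectral data supplied by Theorem~\ref{t:eigenvaluesdihedral} and the description of the centraliser in Proposition~\ref{p:cent}. That earlier decomposition, together with its proof via Proposition~\ref{p:ZnaughtProp}, carries over verbatim to the modified central element $\tilde Z_0=Z_0+\epsilon\sigma$, as noted in the paragraph immediately following Proposition~\ref{p:ZnaughtProp}.

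Applying Theorem~\ref{t:eigenvaluesdihedral} with $\tau=\chi$ one-dimensional, the set $\sigma(\chi)_k$ is a singleton $\{\sigma_{\chi,k}\}$ for every $k\geq 0$. Hence, at each polynomial degree $k>0$, the harmonic space $\mathcal{H}_c(\chi)_k$ is two-dimensional and splits as the direct sum of two one-dimensional $\tilde Z_0$-eigenspaces spanned by $h(\epsilon,\sigma_{\chi,k})^{+}$ and $h(\epsilon,\sigma_{\chi,k})^{-}$ with eigenvalues $\pm\lambda(\sigma_{\chi,k})$, while at $k=0$ the space $\mathcal{H}_c(\chi)_0=1\otimes V(\chi)$ is already one-dimensional with eigenvalue $\epsilon\sigma_{\chi,0}$. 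Re-indexing these eigenspaces by $k\in\bZ$---assigning a positive $k$ to the $+$-eigenspace at polynomial degree $|k|$, a negative $k$ to the $-$-eigenspace at polynomial degree $|k|$, and $k=0$ to the degree-zero piece---yields exactly the eigenvalues $\lambda_k$ of the statement and exhausts the spectrum of $\tilde Z_0$ on the harmonics.

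Each space $\mathcal{H}_c(\chi)_{\lambda_k}$ is $\mathcal{V}$-invariant because $\mathcal{V}$ commutes with $\mathfrak{u}(1,1)$, and by Proposition~\ref{p:cent} the algebra $\mathcal{V}$ is generated by $\tilde Z_0$, which acts by the scalar $\lambda_k$, together with $\bC[G^J]$; consequently, any one-dimensional $\mathcal{V}$-invariant subspace is trivially irreducible. To assemble the full decomposition, these eigenspace data can be fed into the preceding Corollary: since $n=1$ in this setting, $H$ acts on the degree-$|k|$ part of $M_c(\chi)$ by the scalar $|k|+1-N_c(\chi)$, so each harmonic $\tilde Z_0$-eigenvector generates the irreducible lowest weight $\mathfrak{u}(1,1)$-module $L_{\mathfrak{u}(1,1)}(|k|+1-N_c(\chi),\lambda_k)$ under the action of $E_+$. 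No step should be a genuine obstacle here, as the substantive analytic content has already been absorbed into Theorem~\ref{t:eigenvaluesdihedral} and Proposition~\ref{p:cent}; the only real task is the careful bookkeeping that unifies the degree-zero contribution with the sign-doubled contributions at positive polynomial degrees through the single index $k\in\bZ$.
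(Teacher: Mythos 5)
Your proposal is correct and follows the same route the paper intends (the paper simply states this Corollary as a direct consequence of Theorem~\ref{t:eigenvaluesdihedral}, Proposition~\ref{p:cent}, and the general scalar decomposition, without spelling out the details). You have filled in precisely the expected bookkeeping: that $\dim\mathcal{H}_c(\chi)_k=2$ for $k>0$ with distinct $\pm$-eigenvalues under $\tilde Z_0$ for $c$ near $0$, that $\dim\mathcal{H}_c(\chi)_0=1$, that the $\bZ$-indexing merges these, and that Proposition~\ref{p:cent} makes the one-dimensional eigenspaces irreducible $\mathcal{V}$-modules.
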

		
		For the following corollary, note that when $\tau$ is a two-dimensional irreducible representation of $G$, the scalar $N_c(\tau) = 0$.
		
		\begin{Cor}
			Let $\tau=\rho^{(u)}$ be a two-dimensional irreducible representation of $G$, and 
			denote by $\Lambda_k = \{ \lambda(\sigma_{\tau,k})^{\pm} \mid \sigma_{\tau,k} \in \sigma(\tau)_k\}$ the set of eigenvalues of $Z_0+ \epsilon \sigma$ as defined in~\eqref{e:eigenvaluesdihedral}, by $V(\tau)$ the representation space of $\tau$, and by $\mathcal{H}_c(\tau)_{\lambda}$ the $\lambda$-eigenspace of $Z_0+ \epsilon \sigma$ in $\ker E_-$.
			
			For $c$ as in Theorem~\ref{t:eigenvaluesdihedral}, the joint $(\mathcal{V},\mathfrak{u}(1,1))$-decomposition of $M_c(\tau)$ is
			\begin{align*}
				M_c(\tau) = 
				V(\tau)\otimes L_{\mathfrak{u}(1,1)}(1,0)\oplus
				\bigoplus_{k \in \bZ_{>0}}\bigoplus_{\lambda_k \in \Lambda_k} &
				\mathcal{H}_c(\tau)_{\lambda_k}\otimes L_{\mathfrak{u}(1,1)}(k+1,\lambda_k)
			\end{align*}
			The space $\mathcal{H}_c(\chi)_{\lambda_k}$ is a one-dimensional irreducible module for $\mathcal V$ if the set $\Lambda_k$ has four distinct eigenvalues. If $\Lambda_k$ contains only two distinct eigenvalues, $\mathcal{H}_c(\chi)_{\lambda_k}$ is two-dimensional and hence consists of two irreducible modules for $\mathcal V$.
		\end{Cor}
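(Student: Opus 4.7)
The plan is to specialize the general $(\mathcal V,\mathfrak u(1,1))$-decomposition of $M_c(\tau)$ recorded in Section~\ref{Section:scalar} to the present setting $n=1$, $G=I_2(m)$, $\tau=\rho^{(u)}$, and then read off the dimensions of the eigenspaces of $\tilde Z_0=Z_0+\epsilon\sigma$ from Theorem~\ref{t:eigenvaluesdihedral}. Since $\sigma$ is central in $\mathbb C[G]$ and each reflection has trace zero on the two-dimensional irreducible $V(\rho^{(u)})$, Schur's lemma forces $\sigma|_{V(\tau)}=0$, confirming the stated $N_c(\tau)=0$ and turning the lowest weight $k+n-N_c(\tau)$ into $k+1$. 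The $k=0$ term is isolated because $M_c(\tau)_0=V(\tau)$ already lies in $\ker E_-$ and is annihilated by both $Z_0$ (which has no degree-$0$ component) and $\sigma$, producing the summand $V(\tau)\otimes L_{\mathfrak u(1,1)}(1,0)$.

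For $k\geq 1$, I first pin down the total dimension. From the $\mathfrak{su}(1,1)$-decomposition~\eqref{e:sl2decomp} one obtains the telescoping identity $\dim\mathcal H_c(\tau)_k=\dim M_c(\tau)_k-\dim M_c(\tau)_{k-2}$ (with the convention that negative-degree components vanish), and since $\dim M_c(\tau)_k=(k+1)\dim V(\tau)=2(k+1)$, this gives $\dim\mathcal H_c(\tau)_k=4$ for every $k\geq 1$. Theorem~\ref{t:eigenvaluesdihedral} supplies exactly four eigenvectors $h(\epsilon,\sigma_{\tau,k})^{\pm}$ indexed by $\sigma_{\tau,k}\in\sigma(\tau)_k=\{\sigma_{k+u},\sigma_{k-u}\}$ and a sign; their linear independence is visible from the explicit leading terms in~\eqref{e:eigenvectorsdihedral}, since the two vectors coming from $\sigma_{k+u}$ involve the $V(\tau)$-basis vectors in the pattern $(\bar z,z)$ while those coming from $\sigma_{k-u}$ involve them in the swapped pattern. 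When the four elements of $\Lambda_k$ are distinct, these eigenvectors sit in four different eigenspaces, each therefore one-dimensional; when $|\Lambda_k|=2$ (equivalent to $\sigma_{k+u}^{\,2}=\sigma_{k-u}^{\,2}$, forcing $\lambda(\sigma_{k+u})^{\pm}=\lambda(\sigma_{k-u})^{\pm}$), the four basis eigenvectors pair up, and each eigenspace is two-dimensional.

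For the $\mathcal V$-module structure I invoke Proposition~\ref{p:cent}, which identifies $\mathcal V$ as the algebra generated by $\tilde Z_0$ and $\mathbb C[G^J]$. On each eigenspace $\tilde Z_0$ acts by the scalar $\lambda_k$, so $\mathcal V$-submodules coincide with $G^J$-submodules. Because $G^J=C_m$ is cyclic, hence abelian, all its irreducibles are one-dimensional: a one-dimensional eigenspace is automatically $\mathcal V$-irreducible, while a two-dimensional eigenspace splits by Maschke's theorem into a sum of two $C_m$-characters, each $\mathcal V$-irreducible. The only subtle point in executing this plan is confirming, in the case $|\Lambda_k|=2$, that the two eigenvectors sharing an eigenvalue genuinely span a two-dimensional subspace rather than collapsing onto a line; this is immediate from the distinct $V(\tau)$-components just noted, so no real obstacle arises.
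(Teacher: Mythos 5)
Your proposal is correct and follows essentially the same route the paper intends: specialize the general $(\mathcal V,\mathfrak u(1,1))$-decomposition of Section~\ref{Section:scalar} using the eigenvalue/eigenvector data of Theorem~\ref{t:eigenvaluesdihedral} and the description of $\mathcal V$ in Proposition~\ref{p:cent}. The only additions you make are welcome bookkeeping the paper leaves implicit — the Schur/trace-zero argument for $N_c(\rho^{(u)})=0$, the dimension count $\dim\mathcal H_c(\tau)_k=4$, and the check that coinciding eigenvalues still yield a two-dimensional eigenspace.
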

		
		The remainder of this section will be devoted to proving Theorem~\ref{t:eigenvaluesdihedral}.
		
		\subsection{Proof of Theorem~\ref{t:eigenvaluesdihedral}}\label{s:proof}
		
		We recall that $z = x+iy$, $\zeta = \tfrac{1}{2}(\xi - i\eta)$ and that $\oz,\oze$ are defined by complex-conjugation.
		
		\begin{Lemma}\label{l:dihprelim}
			The following equations hold on $H_c(G,E)$:
			\begin{itemize}
				\item[(a)] $s_k zs_k = s_k(z) = \omega^k\oz$ and $s_z\oz s_k = s_k(\oz) = \omega^{-k}z$.
				\item[(b)] $\sigma(n)z = \oz\sigma(n+1)$ and $\sigma(n)\oz = z\sigma(n-1)$, for all $n \in \bZ$.
				\item[(c)] $[\zeta,z] = 1 - \sigma(0)$.
				\item[(d)] $[\oze,z] = \sigma(1)$.
				\item[(e)] $[H_z,z] = z - \tfrac{1}{2}(z\sigma(0) + \oz\sigma(1))$.
				\item[(f)] $[H_{\oz},z] = \tfrac{1}{2}(z\sigma(0) + \oz\sigma(1))$.
			\end{itemize}
		\end{Lemma}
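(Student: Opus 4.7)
The strategy is to establish the six identities in order, since later parts rest on the earlier ones. Parts (a) and (b) are purely group-theoretic, parts (c) and (d) are Dunkl-operator computations using \eqref{RC}, and parts (e) and (f) are formal consequences once $H_z$ and $H_{\oz}$ are rewritten in a convenient form.

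For (a), I would compute the action of $s_k$ on the real basis $\{x,y\}$ using the root $\alpha_k = \sin(\pi k/m)\xi - \cos(\pi k/m)\eta$ and the reflection formula $s_k(v) = v - 2B(v,\alpha_k)\alpha_k$ (and the contragredient action on $E^*$). A short trigonometric identity (double-angle) gives $s_k \cdot x = \cos(2\pi k/m)x + \sin(2\pi k/m)y$ and $s_k \cdot y = \sin(2\pi k/m)x - \cos(2\pi k/m)y$, and combining these as $z = x+iy$ collapses the expression to $\omega^k \oz$. The same computation with $\oz = x-iy$ yields $\omega^{-k}z$. The semidirect product relation $g X = (g\cdot X)g$ then converts (a) into an identity inside $H_c(G,E)$. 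Part (b) is then immediate: expanding $\sigma(n)z = \sum_j c_j\omega^{jn}s_j z$ and using $s_j z = \omega^j \oz s_j$ from (a) rewrites the sum as $\oz\sum_j c_j \omega^{j(n+1)}s_j = \oz\sigma(n+1)$, and similarly for $\oz$.

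The technical heart of the lemma is (c) and (d). I would expand
\[
[\zeta,z] = \tfrac12\bigl([\xi,x] + i[\xi,y] - i[\eta,x] + [\eta,y]\bigr),\qquad [\oze,z] = \tfrac12\bigl([\xi,x] + i[\xi,y] + i[\eta,x] - [\eta,y]\bigr),
\]
and compute each of the four real commutators via \eqref{RC}. With $\langle a_j,\alpha_j\rangle = 1$, the relevant inner products are $\langle x,\alpha_j\rangle = \sin(\pi j/m)$, $\langle y,\alpha_j\rangle = -\cos(\pi j/m)$ and the mirror pair on $\xi,\eta$. Writing $\sin^2(\pi j/m)$, $\cos^2(\pi j/m)$ and $\sin(\pi j/m)\cos(\pi j/m)$ in terms of $\omega^j, \omega^{-j}$ (double-angle) recognises the sums $\sum_j c_j \omega^{jn}s_j$ as $\sigma(n)$, yielding
\[
[\xi,x] = 1 - \sigma(0) + \tfrac12(\sigma(1)+\sigma(-1)),\quad [\eta,y] = 1 - \sigma(0) - \tfrac12(\sigma(1)+\sigma(-1)),
\]
\[
[\xi,y] = [\eta,x] = \tfrac{1}{2i}(\sigma(1)-\sigma(-1)).
\]
Substituting these into the two displayed combinations makes the cross terms cancel in (c) and reinforce in (d), giving $[\zeta,z] = 1 - \sigma(0)$ and $[\oze,z] = \sigma(1)$. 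This bookkeeping with complex and real generators is the only real obstacle, and it is routine once the dictionary $\omega \leftrightarrow (\sin,\cos)$ is fixed.

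Finally, for (e) and (f) I would first observe that the definition of $H$ in Theorem~\ref{u11} together with $Z_0 = \oz\oze - z\zeta$ gives
\[
H_z = \tfrac{1}{2}(H - Z_0) = z\zeta + \tfrac{1}{2}(1-\sigma),\qquad H_{\oz} = \tfrac{1}{2}(H+Z_0) = \oz\oze + \tfrac{1}{2}(1-\sigma),
\]
so that computing $[H_z,z]$ reduces to combining $[z\zeta,z] = z[\zeta,z] = z - z\sigma(0)$ from (c) with $[\sigma,z] = [\sigma(0),z] = \oz\sigma(1) - z\sigma(0)$ from (b). These collapse to $z - \tfrac12(z\sigma(0) + \oz\sigma(1))$. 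The same calculation with $\oze$ in place of $\zeta$, using (d) instead of (c), gives (f). As a consistency check, adding (e) and (f) recovers $[H,z] = z$, which is already known from \eqref{Hzz}.
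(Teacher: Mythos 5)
Your proof is correct and takes essentially the same route as the paper's. The only differences are organisational: the paper simplifies the complex commutators up front via $[\xi,y]=[\eta,x]$ (so $[\zeta,z]=\tfrac12([\xi,x]+[\eta,y])$) rather than tabulating all four real commutators in terms of $\sigma(\pm1)$, and for (e), (f) it works from $H_z=\tfrac12\{z,\zeta\}$ and the identity $[\{z,\zeta\},z]=\{z,[\zeta,z]\}$ rather than from your equivalent rewriting $H_z=z\zeta+\tfrac12(1-\sigma)$; both are the same computation in different packaging.
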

		
		\begin{proof}
			For item (a), let $\theta_j = \pi j/m$. It is straightforward to compute
			\[
			2ia_j = -e^{-i\theta_j}(z - e^{2i\theta_j}\oz),
			\]
			from which
			\begin{align*}
				s_j(z) = z - 2\lpi\alpha_j,z\rpi a_j &= z + 2ia_j(\cos(\theta_j) + i\sin(\theta_j))\\
				&= e^{2i\theta_j}\oz.
			\end{align*}
			The computation for $s_j(\oz)$ is similar. Item (b) is immediate from item (a). For (c),  
			\[
			[\zeta,z] = \tfrac{1}{2}([\xi,x] + [\eta,y]) = 1 - \sum_{j=1}^{m}c(s_j)(\sin^2\theta_j + \cos^2\theta_j)s_j,
			\]
			while for (d) we have
			\begin{align*}
				[\oze,z] &= \tfrac{1}{2}([\xi,x] - [\eta,y]) + i[\xi,y]\\
				&= -\sum_{j=1}^{m}c(s_j)(\sin^2\theta_j - \cos^2\theta_j - 2i\sin\theta_j\cos\theta_j)s_j\\
				&= \sum_{j=1}^{m}c(s_j)e^{i2\theta_j}s_j.
			\end{align*}
			For (e), using (b) and (c) we compute
			\begin{align*}
				[H_z,z] = \tfrac{1}{2}[\{z,\zeta\},z]&= \tfrac{1}{2}\{z,[\zeta,z]\}\\
				&= \tfrac{1}{2}(z(1 - \sigma(0)) + (1 - \sigma(0))z)\\
				&= z - \tfrac{1}{2}(z\sigma(0) + \oz\sigma(1)).
			\end{align*}
			Item (f) is similar, using (b) and (d).
		\end{proof}
		
		\begin{Remar}
			The formulas for $[H_\oz,\oz]$ and $[H_{z},\oz]$ are obtained from items (e) and (f) of the previous proposition using the involution $\theta$ (complex conjugation) of $H_c(G,E)$.
		\end{Remar}
		
		\begin{Propo}\label{p:adZ0}
			For all $n\in \bZ$, we have the following commutation relations in $H_c(G,E)$:
			\begin{align*}
				[Z_0,z^n] &= z^n(-n + \sigma(0)) + \oz^n\sigma(n) + 2\sum_{j=1}^{n-1}\oz^jz^{n-j}\sigma(j)\\
				[Z_0,\oz^n] &= \oz^n(n - \sigma(0)) - z^n\sigma(-n) - 2\sum_{j=1}^{n-1}z^j\oz^{n-j}\sigma(-j).
			\end{align*}
		\end{Propo}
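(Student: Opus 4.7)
The plan is to prove the first identity by induction on $n \geq 1$, deriving the base case from the explicit description of $Z_0$ as $H_{\bar z}-H_z$ (using the decomposition $H = H_z + H_{\bar z}$ noted after Theorem~\ref{u11}), and then to obtain the second identity by applying the complex-conjugation involution $\theta$ of Section~\ref{s:involutions}.

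For the base case $n=1$, I would combine items (e) and (f) of Lemma~\ref{l:dihprelim}:
\begin{equation*}
[Z_0,z] = [H_{\bar z},z]-[H_z,z] = \tfrac{1}{2}(z\sigma(0)+\bar z\sigma(1)) - \bigl(z-\tfrac{1}{2}(z\sigma(0)+\bar z\sigma(1))\bigr) = -z+z\sigma(0)+\bar z\sigma(1),
\end{equation*}
which matches $F_1 := z(-1+\sigma(0)) + \bar z\sigma(1)$ (the empty sum being zero).

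For the inductive step, assume $[Z_0,z^n]=F_n$ with $F_n$ the right-hand side of the claimed formula. Using the Leibniz rule for commutators, $[Z_0,z^{n+1}] = F_n\,z + z^n[Z_0,z]$. To compute $F_n z$, I would push each $\sigma(k)$ past $z$ using Lemma~\ref{l:dihprelim}(b), which gives $\sigma(k)z=\bar z\,\sigma(k+1)$; this converts $z^n\sigma(0)z$ into $z^n\bar z\,\sigma(1)$, $\bar z^n\sigma(n)z$ into $\bar z^{n+1}\sigma(n+1)$, and shifts the indexing of the middle sum by one so that it runs from $j=2$ to $j=n$. Adding $z^n[Z_0,z]=-z^{n+1}+z^{n+1}\sigma(0)+z^n\bar z\,\sigma(1)$, the $-nz^{n+1}$ becomes $-(n+1)z^{n+1}$, the $\sigma(0)$-term becomes $z^{n+1}\sigma(0)$, and the two copies of $z^n\bar z\,\sigma(1)$ (one from $F_n z$, one from $z^n[Z_0,z]$) combine into $2\bar z\,z^n\sigma(1)$ (using the commutativity of $E^*$ from~\eqref{comm}), which is precisely the missing $j=1$ term of the sum in $F_{n+1}$. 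This gives $[Z_0,z^{n+1}]=F_{n+1}$.

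For the second identity, I would apply the conjugate-linear involution $\theta$ of Section~\ref{s:involutions} to the first. Since $\theta(z)=\bar z$, $\theta(\zeta)=\bar\zeta$ and $\theta(s)=s$, one checks that $\theta(Z_0)=-Z_0$ and $\theta(\sigma(n)) = \sum c_j\overline{\omega^{jn}}s_j = \sigma(-n)$. Applying $\theta$ to both sides of the first formula and multiplying by $-1$ yields the second. The computation is essentially bookkeeping; the only slightly delicate point is tracking the index shift in the middle sum of the inductive step, which is where the factor of $2$ in the statement is produced.
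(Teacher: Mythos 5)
Your proof is correct and follows essentially the same inductive route as the paper: base case $n=1$ from Lemma~\ref{l:dihprelim}(e),(f) (via $Z_0 = H_{\bar z}-H_z$), inductive step from Lemma~\ref{l:dihprelim}(b) with the same index shift producing the factor $2$. The only (very minor) variation is that the paper handles $[Z_0,\bar z^n]$ by repeating the induction with the $\theta$-conjugated versions of (e) and (f), while you apply $\theta$ once to the completed first identity; the two are equivalent given the observations $\theta(Z_0)=-Z_0$ and $\theta(\sigma(n))=\sigma(-n)$, which you correctly note.
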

		\begin{proof}
			The result will be proven by induction. For $n=1$, we use Lemma \ref{l:dihprelim}(b), (e) and (f) to get $
			[Z_0,z] = z(-1 + \sigma(0)) + \oz\sigma(1).$ Assuming for $n$, we compute 
			\[
			[Z_0,z^n]z = -nz^{n+1} + z^n\oz\sigma(1) + 2\left(\sum_{j=1}^{n-1}\oz^{j+1}z^{n-j}\sigma(j+1)\right) + 
			\oz^{n+1}\sigma(n+1) .
			\]
			Thus, using $[Z_0,z^{n+1}] = [Z_0,z^n]z + z^{n}[Z_0,z]$, we get
			\[
			[Z_0,z^{n+1}] = z^{n+1}(-n-1 + \sigma(0)) +  2\left(\sum_{j=1}^{n}\oz^{j}z^{n-j+1}\sigma(j)\right)
			+ \oz^{n+1}\sigma(n+1).
			\]
			The other formula is obtained similarly  by using the complex conjugated versions of Lemma \ref{l:dihprelim}(e) and (f).
		\end{proof}
		
		\begin{Remar}
			Note that when $c=0$, we have $\sigma{(n)} = 0$, for all $n\in\bZ$ and we obtain the classical relations $[Z_0,z^n] = -nz^n$ and $[Z_0,\oz^n] = n\oz^n$.
		\end{Remar}
		
		\begin{Propo}\label{p:eigenvaluesdihedralchar}
			Let $\chi$ be a one-dimensional representation of $G$ and $c$ real-valued and close enough to $c=0$ such that $\pi:H_c(G,E)\to \End(M_c(\chi))$ is irreducible and unitarisable. For each $n\geq 0$, the eigenvalues of $Z_0$ on the space of Dunkl-harmonics of homogeneous degree $n$ are
			\[
			\lambda_n^{\pm} = \pm\sqrt{(n - N_c(\chi) - \chi(\sigma(n)))(n-N_c(\chi) + \chi(\sigma(n)))}.
			\]
		\end{Propo}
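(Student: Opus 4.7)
The plan is to reduce the eigenvalue computation to a $2 \times 2$ matrix problem by passing to the quotient $M_c(\chi)_n / E_+ M_c(\chi)_{n-2}$. First, iterating the $\mathfrak{su}(1,1)$-decomposition \eqref{e:sl2decomp} and comparing with $\dim M_c(\chi)_n = n + 1$ yields the direct sum $M_c(\chi)_n = \mathcal{H}_c(\chi)_n \oplus E_+ M_c(\chi)_{n-2}$ with $\dim \mathcal{H}_c(\chi)_n = 2$ for $n \geq 1$; the case $n = 0$ is immediate, since $\chi(\sigma(0)) = N_c(\chi)$ and $Z_0(1 \otimes v_\chi) = 0$. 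Since $z$ and $\bar z$ commute in $H_c(G,E)$ by \eqref{comm}, for $1 \leq j \leq n-1$ one has $\bar z^j z^{n-j} = 2 E_+ \cdot \bar z^{j-1} z^{n-j-1}$, so $E_+ M_c(\chi)_{n-2}$ is spanned by the vectors $z^j \bar z^{n-j} \otimes v_\chi$ with $1 \leq j \leq n-1$, and the quotient $M_c(\chi)_n / E_+ M_c(\chi)_{n-2} \cong \mathcal{H}_c(\chi)_n$ is spanned by the images $\bar v_n^+, \bar v_n^-$ of $v_n^+ := z^n \otimes v_\chi$ and $v_n^- := \bar z^n \otimes v_\chi$.

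Next, since $\zeta$ and $\bar\zeta$ annihilate $1 \otimes v_\chi$, we have $Z_0(1 \otimes v_\chi) = 0$, whence $Z_0(v_n^{\pm}) = [Z_0,z^n\text{ or }\bar z^n](1 \otimes v_\chi)$. Applying Proposition~\ref{p:adZ0} and observing again that the cross terms $\bar z^j z^{n-j}$ with $1 \leq j \leq n-1$ lie in $E_+ M_c(\chi)_{n-2}$, we obtain modulo this subspace
\begin{align*}
Z_0(v_n^+) &\equiv (-n + N_c(\chi))\, v_n^+ + \chi(\sigma(n))\, v_n^-, \\
Z_0(v_n^-) &\equiv -\chi(\sigma(-n))\, v_n^+ + (n - N_c(\chi))\, v_n^-.
\end{align*}
Since $Z_0$ commutes with $E_+$ and preserves each degree component, it preserves the summand $E_+ M_c(\chi)_{n-2}$ and hence acts on the quotient $\mathcal{H}_c(\chi)_n$; in the basis $\{\bar v_n^+, \bar v_n^-\}$ it is represented by the trace-zero matrix
\[
M = \begin{pmatrix} -n + N_c(\chi) & -\chi(\sigma(-n)) \\ \chi(\sigma(n)) & n - N_c(\chi) \end{pmatrix},
\]
whose eigenvalues satisfy $\lambda^2 = -\det M = (n - N_c(\chi))^2 - \chi(\sigma(n))\chi(\sigma(-n))$.

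To finish, I would verify that $\chi(\sigma(n)) \in \bR$ for every one-dimensional character $\chi$ of $I_2(m)$ via a character-valued analogue of Proposition~\ref{p:sigmas}: the sum $\chi(\sigma(n)) = \sum_k c_k \omega^{kn}\chi(s_k)$ vanishes unless $n \equiv 0$, or (for $m = 2q$) $n \equiv q \pmod m$, in which cases $\omega^{kn} \in \{\pm 1\}$. This yields $\chi(\sigma(-n)) = \overline{\chi(\sigma(n))} = \chi(\sigma(n))$, and the quadratic expression factors as claimed into $\lambda_n^\pm = \pm\sqrt{(n - N_c(\chi) - \chi(\sigma(n)))(n - N_c(\chi) + \chi(\sigma(n)))}$. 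The main obstacle is precisely this reality statement; the matrix-entry computation and the dimension count are routine by comparison, but without the reality of $\chi(\sigma(n))$ one recovers only the symmetric product form of $\lambda^2$ rather than its factorisation into two linear factors.
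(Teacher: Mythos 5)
Your proof is correct and follows essentially the same route as the paper: both reduce to computing the action of $Z_0$ on $\mathcal{H}_c(\chi)_n \cong M_c(\chi)_n / E_+M_c(\chi)_{n-2}$ in the basis given by (images of) $z^n$, $\bar z^n$, apply Proposition~\ref{p:adZ0} to get the same $2\times 2$ matrix, and read off the eigenvalues. The one cosmetic difference is that the paper has already recorded $\chi(\sigma(n)) = \chi(\sigma(-n)) \in \bR$ earlier in the section (immediately after the table of one-dimensional characters, citing Proposition~\ref{p:sigmas}), so it writes the matrix with both off-diagonal entries in terms of $\chi(\sigma(n))$ and factors at once, whereas you carry $\chi(\sigma(-n))$ along and supply the reality argument at the end — an equivalent bookkeeping choice.
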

		
		\begin{proof}
			Let $\bC_\chi$ be the representation space of the character $\chi$. Since $M_c(\chi) = \bC[E]\otimes\bC_\chi \cong \bC[E]$, a general element in the degree $n$ part $M_c(\chi)_n$ is realized by a polynomial $\varphi$ in $\bC[E] = \bC[z,\oz]$ homogeneous of degree $n$ that can be written as
			\begin{equation}\label{e:nhompoly}
				\varphi = az^n + b\oz^n + z\oz\varphi_+
			\end{equation}
			with $a,b\in\bC$ and $\varphi_+$ of homogeneous degree $n-2$. We note that $z\oz\varphi_+$ is in the image of the operator $E_+$. Since there is a unitary decomposition of the space $\bC_n$, of homogeneous  polynomials of (total) degree $n$, $\bC_n = \mathcal{H}_c(\chi)_n \oplus E_+(\bC_{n-2})$, a polynomial $\varphi$ as in (\ref{e:nhompoly}) can only be harmonic if the coefficients $a,b$ are not simultaneously zero.
			From Proposition \ref{p:adZ0}, and using~\eqref{e:chisigma}, we have 
			\begin{align*}
				\pi(Z_0)(z^n) &= z^n(-n + N_c(\chi)) + \oz^n\chi(\sigma(n)) + \phi_+\\
				\pi(Z_0)(\oz^n) &= -z^n\chi(\sigma(n)) + \oz^n(n - N_c(\chi))  + \phi_+,
			\end{align*}
			where, in each formula, $\phi_+$ denotes an element in the image of $E_+$. Thus, the equation $\pi(Z_0)(\varphi) = \lambda\varphi$ implies that $\lambda$ must be an eigenvalue of the matrix
			\[
			\begin{pmatrix}
				-n+N_c(\chi) & -\chi(\sigma(n))\\
				\chi(\sigma(n)) & n-N_c(\chi)
			\end{pmatrix}.
			\]
			Since the spectrum of this diagonalisable matrix is $\lambda_n^\pm$, we are done.
		\end{proof}
		
		\begin{Remar}
			If, instead of $Z_0$ we consider an operator $\tilde Z_0 = Z_0 + \epsilon\sigma$, the eigenvalues of $\tilde Z_0$ on $\mathcal{H}_c(\chi)_n$ would be the spectrum of the matrix
			\[
			\begin{pmatrix}
				-n+ N_c(\chi) & - \chi(\sigma(n))(1-\epsilon)\\
				\chi(\sigma(n))(1+\epsilon) & n-N_c(\chi)
			\end{pmatrix},
			\]
			in which case we would have
			\[
			\tilde\lambda_n^{\pm} = \pm\sqrt{(n- N_c(\chi))^2 - \chi(\sigma(n))^2(1-\epsilon^2))}.
			\]
		\end{Remar}
		

		\begin{Propo}\label{p:eigenvaluesdihedralrefl}
			Let $\tau=\rho^{(u)}$ be a two-dimensional representation of $G$ and $c$ real-valued and close enough to $c=0$ such that $\pi:H_c(G,E)\to \End(M_c(\tau))$ is irreducible and unitarisable. For each $n\geq 0$, the eigenvalues of $Z_0$ on the space of Dunkl-harmonics of homogeneous degree $n$ are
			\begin{align*}
				\lambda_n^{\pm} &= \pm\sqrt{(n - \sigma_{n+u})(n + \sigma_{n+u}) }\\
				\mu_n^{\pm} &= \pm\sqrt{(n - \sigma_{n-u})(n + \sigma_{n-u}) }.
			\end{align*}
		\end{Propo}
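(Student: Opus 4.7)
The plan is to mirror the argument of Proposition~\ref{p:eigenvaluesdihedralchar}, but now working with the two-dimensional representation space $V(\rho^{(u)})$. First I would fix a basis $\{v_+,v_-\}$ of $V(\rho^{(u)})$ (the vectors $z$ and $\bar z$ in the paper's convention) so that
\[
\sigma(k)\cdot v_+ = \sigma_{k+u}\,v_-,\qquad \sigma(k)\cdot v_- = \sigma_{k-u}\,v_+,
\]
for all $k\in\bZ$; both formulas follow directly from the explicit form of $\rho^{(u)}(s_j)$ together with the definition of $\sigma(k)$. Since $E_+=\tfrac12 z\bar z$ is injective on $M_c(\tau)_{n-2}$ for $c$ close to $0$, the space $\mathcal{H}_c(\tau)_n$ is four-dimensional (for $n\geq 1$), and a set of representatives modulo $E_+(M_c(\tau)_{n-2})$ is given by $\{z^n\otimes v_+,\ \bar z^n\otimes v_+,\ z^n\otimes v_-,\ \bar z^n\otimes v_-\}$, because every other monomial $z^j\bar z^{n-j}\otimes v_\pm$ with $1\leq j\leq n-1$ lies in the image of $E_+$.

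Next, I would compute the matrix of $\pi(Z_0)$ on this quotient. Since $Z_0\cdot(1\otimes v)=0$ for every $v\in V(\tau)$ (as $\zeta$ and $\bar\zeta$ annihilate the generator of the standard module), applying Proposition~\ref{p:adZ0} together with the rule $\sigma(k)\cdot v_\pm = \sigma_{k\pm u}\,v_\mp$ gives, for example,
\[
\pi(Z_0)(z^n\otimes v_+)\equiv -n\,(z^n\otimes v_+)+\sigma_u\,(z^n\otimes v_-)+\sigma_{n+u}\,(\bar z^n\otimes v_-)
\]
modulo $E_+(M_c(\tau)_{n-2})$, and analogous formulas for the other three representatives. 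The resulting $4\times 4$ matrix contains cross terms (involving $\sigma_u$ and $\sigma_{-u}$) that couple the $v_+$ and $v_-$ sectors, in addition to the ``diagonal-type'' terms involving $\sigma_{n\pm u}$.

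The critical step is then to invoke Proposition~\ref{p:sigmas}: because the index $u$ of $\rho^{(u)}$ satisfies $1\leq u\leq q-1$, we have $u\not\equiv 0,q\pmod{m}$, and hence $\sigma_u=\sigma_{-u}=0$. All cross terms vanish, and the matrix block-diagonalizes into two decoupled $2\times 2$ blocks acting respectively on $\{z^n\otimes v_+,\,\bar z^n\otimes v_-\}$ and on $\{z^n\otimes v_-,\,\bar z^n\otimes v_+\}$, each of the form
\[
\begin{pmatrix} -n & -\sigma_{n\pm u}\\ \sigma_{n\pm u} & n \end{pmatrix},
\]
with characteristic polynomial $\lambda^2-n^2+\sigma_{n\pm u}^2$. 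This immediately yields the eigenvalues $\lambda_n^\pm=\pm\sqrt{n^2-\sigma_{n+u}^2}$ and $\mu_n^\pm=\pm\sqrt{n^2-\sigma_{n-u}^2}$ as claimed. Genuine harmonic eigenvectors are then obtained by lifting the matrix eigenvectors through the projection $\Proj_n$ of~\eqref{e:Proj}, which commutes with $\pi(Z_0)$ because $Z_0$ is central in $\mathfrak{u}(1,1)$ and therefore preserves eigenvalues.

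The main obstacle is bookkeeping: correctly tracking how $\sigma(k)$ swaps the two basis vectors of $V(\rho^{(u)})$, and identifying the two $\pi(Z_0)$-invariant two-dimensional subspaces after the cancellation. Once $\sigma_u=0$ is extracted from Proposition~\ref{p:sigmas}, the computation reduces to two independent copies of the $1$-dimensional character argument of Proposition~\ref{p:eigenvaluesdihedralchar}.
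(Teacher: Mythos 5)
Your proposal follows the paper's proof essentially verbatim: reduce modulo $E_+(M_c(\tau)_{n-2})$, apply Proposition~\ref{p:adZ0} together with the rule $\rho^{(u)}(\sigma(k))(z)=\sigma_{k+u}\bar z$, $\rho^{(u)}(\sigma(k))(\bar z)=\sigma_{k-u}z$ to write the $4\times4$ matrix of $Z_0$, invoke Proposition~\ref{p:sigmas} to kill the cross terms $\sigma_{\pm u}$, and read off the spectrum from the resulting two $2\times2$ blocks. The only cosmetic difference is that you name the invariant $2$-dimensional subspaces explicitly, whereas the paper simply states the eigenvalues after block-diagonalization; the substance is identical.
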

		
		\begin{proof}
			We have $\tau = \rho^{(u)}$, with $1\leq u\leq q-1$. The method is the same as Proposition \ref{p:eigenvaluesdihedralchar}. We take $z,\oz$ also as basis for $V(\tau)$. Note that
			$\rho^{(u)}(\sigma(n))(z) = \sigma_{n+u}\oz$ and $\rho^{(u)}(\sigma(n))(\oz) = \sigma_{n-u}z$. Using Proposition \ref{p:adZ0}, we compute
			\begin{align*}
				\pi(Z_0)(z^n\otimes z) &= -n(z^n\otimes z) + 
				\sigma_{n+u}(\oz^n\otimes \oz) + \sigma_{u}(z^n\otimes \oz)  + \phi_+, \\
				\pi(Z_0)(\oz^n\otimes \oz) &= n(\oz^n\otimes \oz) - 
				\sigma_{-n-u}(z^n\otimes z) - \sigma_{-u}(\oz^n\otimes z)  + \phi_+,\\
				\pi(Z_0)(z^n\otimes \oz) &= -n(z^n\otimes \oz) + 
				\sigma_{n-u}(\oz^n\otimes z) + \sigma_{-u}(z^n\otimes z)  + \phi_+,\\
				\pi(Z_0)(\oz^n\otimes z) &= n(\oz^n\otimes z) - 
				\sigma_{-n+u}(z^n\otimes \oz) - \sigma_{u}(\oz^n\otimes \oz)  + \phi_+,
			\end{align*}
			where, in each formula, $\phi_+$ denotes an element in the image of $E_+$. It follows that the spectrum of $Z_0$ on $\mathcal{H}_c(\rho^{(u)})_n$ is the spectrum of the matrix
			\[
			\begin{pmatrix}
				-n &-\sigma_{n+u} & 0 & 0 \\
				\sigma_{n+u} & n & 0 & 0  \\
				0 & 0 & -n & -\sigma_{n-u} \\
				0 &0 & \sigma_{n-u} & n  
			\end{pmatrix},
			\]
			where we used that $\sigma_u=0$ for $1\leq u \leq q-1$, by Lemma~\ref{p:sigmas}. We conclude that the eigenvalues are the two pairs indicated in the statement.
		\end{proof}
		
		\begin{Remar}
			Again, if instead of $Z_0$ we consider the more general operator $\tilde{Z}_0 = Z_0+\epsilon\sigma(0)$, the eigenvalues of $\tilde Z_0$ on $\mathcal{H}_c(\rho^{(u)})_n$ would be the spectrum of the matrix
			\[
			\begin{pmatrix}
				-n &-(1-\epsilon)\sigma_{n+u} & 0 & 0 \\
				(1+\epsilon)\sigma_{n+u} & n & 0 & 0  \\
				0 & 0 & -n & -(1-\epsilon)\sigma_{n-u} \\
				0 & 0 & (1+\epsilon)\sigma_{n-u} & n  
			\end{pmatrix},
			\]
			in which case we would have the pairs of eigenvalues
			\begin{align*}
				\tilde\lambda_n^{\pm} &= \pm\sqrt{n^2 +(\epsilon^2-1)\sigma_{n+u}^2 }\\
				\tilde\mu_n^{\pm} &= \pm\sqrt{n^2 +(\epsilon^2-1)\sigma_{n-u}^2}.
			\end{align*}
		\end{Remar}

		
		The eigenvectors of the matrices appearing in the preceding proofs give the coefficients of the polynomials used inside the projection operator in~\eqref{e:eigenvectorsdihedral}. Using the 
		commutation relations from Proposition~\ref{u11}, it is straightforward to check that when~\eqref{e:Proj} acts on a polynomial homogeneous of degree $k$, the image is in $\ker E_-$. The final result now follows because $Z_0 + \epsilon \sigma$ commutes with $E_+,E_-$.

		\section{Spinor unitary duality}\label{Section:Clifford}
		
		\subsection{Clifford algebra}\label{s:clifford}
		
		We consider the complex Clifford algebra $(\mathcal{C},B_\bC)$ associated with the space $E_\bC$ and the symmetric bilinear form $B_\bC$. Denoting the natural embedding $\iota \colon E_\bC \to \mathcal{C}$, a set of generators  for $\mathcal{C}$,  corresponding to the chosen orthonormal basis of $E$, is given by
		$e_j = \iota (\xi_j)$ for $j\in\{1,\dotsc,2n\}$. These satisfy the anticommutation relations $\{e_{j},e_k\}
		= 2B_\bC(\xi_j,\xi_k)=2\delta_{jk}$. Note that in \cite{BDES} the opposite sign convention in the anticommutation relations is used, though this yields isomorphic complex Clifford algebras.

		The group actions on $E$ and  $E_\bC$ are naturally extended also to the Clifford algebra $\mathcal{C}$ through the correspondence between bases. By means of the action of $J$ on $\mathcal{C}$, we have $
		J \cdot e_j = -e_{n+j}$ and  $J \cdot e_{n+j} = e_j$ for $j=1,\dotsc,n$.
		One defines
		\begin{equation}\label{fj} 
			f_j= \pi^+(e_j) = \frac12(e_j - i \,e_{n+j}),\qquad f_j^{\dagger}= \pi^-(e_j) = 
			\frac12(e_j + i\, e_{n+j}),
		\end{equation} 
		for $j=1,\dotsc,n$,
		which correspond to $f_j = \iota(\zeta_j)$ and $f_j^{\dagger}=\iota(\bar \zeta_j)$. One has $ J_\bC \cdot f_j = -i\,f_j$ and $J_\bC \cdot f_j^{\dagger} = i\,f_j^{\dagger}$. 
		They satisfy the Grassmann relations
		\begin{equation}\label{ff} \{f_j^{\dagger},f_k\} = \delta_{jk}, \qquad \{f_j,f_k\} = 0 = \{f_j^{\dagger},f_k^{\dagger}\} .
		\end{equation}
		The inverse relations are given by 
		\begin{equation}\label{inverse} 
			e_j = \begin{cases}
				f_j + f_j^{\dagger} & \text{ if } 1\leq j \leq n,\\
				i(f_{j-n} - f_{j-n}^{\dagger}) & \text{ if } n+1\leq j \leq 2n.
			\end{cases}
		\end{equation}
		
		If $A \in \mathrm{U}(n)$ acts on $(z_1,\dotsc,z_{n})^T$ through matrix multiplication, its action on $(f_1,\dotsc,f_n)^T$ is given through multiplication by $A^*$ and on $(f_1^{\dagger},\dotsc,f_n^{\dagger})^T$ through multiplication by $A$.
		
		An irreducible spin module or spinor space $\mathbb{S}$ is given by the Grassmann algebra $\bigwedge W^-$ generated by $\{f_j^{\dagger}\}_{j=1}^n$, as $W^-$ is a $B_{\bC}$-isotropic subspace of $E_{\bC}$. The alternative choice $\bigwedge W^+$ is obtained via complex conjugation. 
		On $\mathbb{S}$, the skew-symmetric Grassmann variables $\{f_j^{\dagger}\}_{j=1}^n$ act by exterior multiplication, while the Grassmann derivatives  $\{f_j\}_{j=1}^n$ act by interior multiplication.
		The spinor space can be realised explicitly  inside the Clifford algebra as the product 
		\begin{equation}\label{e:spinor}
			\left(\bigwedge f^{\dagger}\right)I \quad \text{with }I = (f_1f_1^{\dagger})(f_2f_2^{\dagger})\dotsm (f_nf_n^{\dagger}).
		\end{equation}
		The action then follows by Clifford algebra multiplication, using the anticommutation relations~\eqref{ff} and $f_j I= 0$ for $j=1\dotsc,n$. 
		
		We will denote $\mathbb{S}^l$ for the ``degree'' $l$ subspace of $\mathbb{S}$, that is, the $l$-th exterior power of $\{f_j^{\dagger}\}_{j=1}^n$. The element $\sum_j f_j^{\dagger}f_j $ acts as multiplication by $l$ on $\mathbb{S}^l$ and  it induces a $\mathbb{Z}$-grading structure on the Clifford algebra because of the following simple result. 
		\begin{Propo}
			Denote $\beta = \sum_j f_j^{\dagger}f_j$. For all $1\leq k \leq n$, we have
			\[
			[\beta , f^\dagger_k] = f^\dagger_k, \qquad\qquad [\beta , f_k] = -  f_k.
			\]
			In particular, the Clifford algebra $\mathcal{C}$ is a $\mathbb{Z}$-graded algebra.
		\end{Propo}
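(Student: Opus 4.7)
The plan is to compute both commutators directly from the Grassmann relations \eqref{ff}, and then extract the grading from the fact that $\mathrm{ad}(\beta)$ will be a diagonalisable derivation with integer eigenvalues on the generators.

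The key algebraic identity I would use is the standard formula
\[
[AB, C] = A\{B, C\} - \{A, C\}B,
\]
valid in any associative algebra (it is the ``mixed'' Leibniz rule obtained by expanding both sides). Applying it with $A = f_j^{\dagger}$, $B = f_j$ and $C = f_k^{\dagger}$, and invoking $\{f_j, f_k^{\dagger}\} = \delta_{jk}$ together with $\{f_j^{\dagger}, f_k^{\dagger}\} = 0$ from \eqref{ff}, gives $[f_j^{\dagger} f_j, f_k^{\dagger}] = \delta_{jk} f_j^{\dagger}$. Summing over $j$ yields the first claim $[\beta, f_k^{\dagger}] = f_k^{\dagger}$. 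The second identity is entirely parallel: take $C = f_k$, and use $\{f_j, f_k\} = 0$ together with $\{f_j^{\dagger}, f_k\} = \delta_{jk}$ to obtain $[f_j^{\dagger} f_j, f_k] = -\delta_{jk} f_j$, hence $[\beta, f_k] = -f_k$.

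For the grading statement, I would define $\mathcal{C}_m$ as the $m$-eigenspace of $\mathrm{ad}(\beta)$ acting on $\mathcal{C}$. Since $\mathrm{ad}(\beta)$ is a derivation, these eigenspaces automatically satisfy $\mathcal{C}_m\mathcal{C}_{m'}\subseteq \mathcal{C}_{m+m'}$. The two commutator identities just proved say that $f_k^{\dagger}\in \mathcal{C}_1$ and $f_k\in \mathcal{C}_{-1}$, while $1\in \mathcal{C}_0$. Because the $f_k$ and $f_k^{\dagger}$ generate $\mathcal{C}$, every ordered monomial in them lies in some $\mathcal{C}_m$ with $m\in\mathbb{Z}$, and this exhausts $\mathcal{C}$ by the Grassmann PBW basis. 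Thus $\mathcal{C} = \bigoplus_{m\in\mathbb{Z}}\mathcal{C}_m$, giving the desired $\mathbb{Z}$-grading.

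There is really no hard step here — the whole proof is a one-line application of the derivation identity for $[AB,C]$ combined with \eqref{ff}. The only point worth explicitly noting is that the grading is genuinely $\mathbb{Z}$-valued and not merely $\mathbb{Z}/2\mathbb{Z}$; this is forced by the two families of generators carrying opposite $\mathrm{ad}(\beta)$-weights $\pm 1$, rather than both being of order $2$.
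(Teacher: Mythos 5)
Your proof is correct and follows essentially the same route as the paper: a direct computation from the Grassmann relations~\eqref{ff} (your use of the identity $[AB,C]=A\{B,C\}-\{A,C\}B$ just makes explicit what the paper calls a ``straightforward computation''), followed by defining $\mathcal{C}_m$ as the $m$-eigenspace of $\mathrm{ad}(\beta)$ and using the derivation property to obtain the $\mathbb{Z}$-grading.
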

		
		\begin{proof}
			The equations in the statement follow from a straightforward computation using the Grassmann relations (\ref{ff}). From this, we get $\mathcal{C} = \oplus_{k\in\mathbb{Z}} \mathcal{C}_k$, in which $\mathcal{C}_k$ is the $k$-eigenspace of $\textup{ad}(\beta)$ acting on $\mathcal{C}$. More specifically, $\mathcal{C}_k$ consists of all linear combination of monomials $f^\dagger_{j_1}\cdots f^\dagger_{j_p}f_{j_1}\cdots f_{j_q}$ in $\mathcal{C}$ with $p-q = k$. Note that $\mathcal{C}_k=0$ if $k<-n$ or $k>n$.
		\end{proof}
		
		Note that taking the residue mod 2 of the $\mathbb{Z}$-grading above 
		yields the usual $\mathbb{Z}_2$-grading of the Clifford algebra.

		The restriction of the Hermitian structure $(\cdot,\cdot)$ on $E_{\mathbb{C}}$ to $W^-$ naturally yields a Hermitian structure $(\cdot,\cdot)_\mathbb{S}$ on the spinors which on decomposable vectors of degree $p$ satisfies 
		\[
		(u_1^\dagger\wedge\cdots\wedge u_p^\dagger I,w_1^\dagger\wedge\cdots\wedge w_p^\dagger I)_\mathbb{S} = \det((w_k^\dagger,u_j^\dagger))_{j,k=1}^p,
		\]
		where $u_1^\dagger,\ldots , w_p^\dagger$ are elements in $W^-$. It is straightforward to check that, for all $1\leq j \leq n$ and $u^\dagger,w^\dagger\in \mathbb{S}$, the Clifford action satisfies the property
		\[
		(f_j^\dagger\cdot u^\dagger,w^\dagger)_\mathbb{S} =  (u^\dagger, f_j\cdot w^\dagger)_\mathbb{S}.
		\]

		\subsection{Tensor product and reflection group}\label{s:reflgroupclifford}
		
		We consider the tensor product $ H_c(G,E) \otimes \mathcal C$. To lighten the notation, we  shall mostly omit the tensor product symbols for elements of $ H_c(G,E) \otimes \mathcal C$ when it is clear in which part each factor resides. We naturally extend the involution $\ast$ on $E_\bC\oplus E_\bC^*$ to an anti-involution on $H_c(G,E)\otimes \mathcal{C}$. 
		
		Though the group actions on $E$ are naturally extended to  $\mathcal{C}$, the elements of $\bC[G]\subset H_c(G,E)$ do not interact with $\mathcal{C}$ in the tensor product $ H_c(G,E) \otimes \mathcal C$.  However, in $\mathcal{C}$ there is a realisation of a Pin group, a double cover of the orthogonal group associated with the used bilinear form, that yields a twisted action on $E_{\bC}\subset\mathcal{C}$. Denoting the covering map by $p\colon \mathrm{Pin} \to \mathrm{O}$, by viewing $G$ as a subgroup of the orthogonal group $\mathrm{O}$, one defines the preimage $\tilde G = p^{-1}(G)$ and similarly
		$(G^J)^{\widetilde{}}=p^{-1}(G^J)$ for the subgroup $G^J\subset G$, see also~\cite{Morris} and its sequels.

		\begin{Remar} 
			In a real Clifford algebra, depending on the sign convention in the defining relations, and the bilinear form being positive-definite or negative-definite, there is a realisation of either Pin$_+$ or Pin$_-$, where the preimage of a reflection has order 2 or order 4, and which are thus non-isomorphic groups in general.  
			
			In the complex Clifford algebra, both groups can be realised.   Below, the preimage of a reflection will have order 2. For them to have order 4, one can multiply by the imaginary unit $i$.
		\end{Remar}
		
		The group $\tilde G$ is realised in $\mathcal{C}$ as follows. 
		For a reflection $s\in\mathcal{S}$, the associated root $\alpha_s\in E$ is embedded in $\mathcal C$ as 
		\[
		\iota(\alpha_s  )
		= \sum_{j=1}^n ( \langle x_j , \alpha_s  \rangle e_j
		+ \langle  y_j , \alpha_s \rangle e_{n+j})
		= \sum_{j=1}^n ( \langle  z_j , \alpha_s \rangle f_j
		+\langle \bar z_j , \alpha_s \rangle f_j^{\dagger} 
		)
		\rlap{\,.}
		\]
		The elements $\pm \iota(\alpha_s  )/ \|\alpha_s\|$, with $\|\alpha_s\| =\sqrt{B(\alpha_s,\alpha_s)}$, then correspond to the preimages $p^{-1}(s)$ in $\tilde G$ and generate the realisation of $\tilde G$ in $\mathcal{C}$.  
		
		Now, we are interested in another realisation of $\tilde G$ in $H_c(G,E) \otimes \mathcal C$, which interacts with both $H_c(G,E) $ and $ \mathcal C$. Denote $\rho \colon \tilde G \to  H_c(G,E) \otimes \mathcal C \colon 
		\tilde g \mapsto
		\rho(\tilde g) =   p(\tilde g) \otimes \tilde g
		$. 
		This realisation of $\tilde G$ is 
		generated by  the elements $\pm s \otimes \iota(\alpha_s)/ \|\alpha_s\| \in H_c(G,E) \otimes \mathcal C$ for $s\in \mathcal{S}$. 
		For $\tilde g\in \tilde G$ and $X \in H_c(G,E) \otimes \mathcal C$, homogeneous elements for the $\mathbb{Z}_2$-grading inherited from the Clifford algebra $\mathcal C$, we have that 
		\begin{equation}\label{e:rhotg}
			\rho(\tilde g) X = (-1)^{|\rho(\tilde g)| |X|} (p(\tilde g)\cdot X )\rho(\tilde g),
		\end{equation}
		where $|\cdot|$ denotes the $\mathbb{Z}_2$-grading and $p(\tilde g)\cdot X$ is the action of $p(\tilde g) \in G$ as induced from the action of $G$ on $E$ to $H_c(G,E)$ as well as $\mathcal C$.  

		Next, we will introduce certain elements depending on $s\in\mathcal{S}$ that will be relevant for the Lie superalgebra realization in the next section. For a reflection $s\in\mathcal{S}$, the root vector can be written as $\alpha_s = \pi^+(\alpha_s) + \pi^-(\alpha_s)$, by means of the projection operators $\pi^{\pm} = (1\pm i J_{\mathbb{C}} )/2$ onto $E_\bC = W^+\oplus W^-$. 
		%
		Since $\iota(W^-) = \iota(\pi^-(E_\bC))$ in $\mathcal C$ is spanned by $\{f_1^\dagger,\ldots, f_n^\dagger\}$, we denote the projections of the roots onto $W^-$ in $\mathcal C$ by
		\begin{equation}\label{e:alphadagger}
			\alpha_s^\dagger = \iota(\pi^-(\alpha_s)) = \pi^-(\iota(\alpha_s)) = \sum_j\lpi \oz_j,\alpha_s \rpi f^\dagger_j.
		\end{equation}
		For $s\in\mathcal{S}$, define  $(\alpha_s^\dagger)^\vee\in \iota(W^+)$ as
		\begin{equation}\label{e:asd}
			(\alpha_s^\dagger)^\vee = 2 \iota(\pi^+(\alpha_s))/\|\alpha_s\|^2  =  \frac{2}{\|\alpha_s\|^2} \sum_{k=1}^n \lpi z_k, \alpha_s \rpi  f_k =   \sum_{k=1}^n \lpi \alpha_s^\vee,2\bar\zeta_k \rpi  f_k\rlap{\,,}
		\end{equation}
		and then define
		\begin{equation}\label{e:taus}
			\tau_s 
			= 1 -  \alpha_s^\dagger  (\alpha_s^\dagger)^\vee\in \mathcal C.
		\end{equation}
		
		\begin{Propo}\label{p:reflection}
			For $s\in\mathcal{S}$,  the element $\tau_s $ acts by the reflection with respect to the element $\alpha_s^\dagger\in W^-$ on $\mathbb{S}\cong \bigwedge W^-$. 
		\end{Propo}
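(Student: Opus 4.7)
The plan is to identify $\tau_s$ as a concrete operator on $\bigwedge W^-$ via the Fock realisation (\ref{e:spinor}), and then recognise it as the exterior-algebra lift of a rank-one Hermitian reflection. Under the isomorphism $\mathbb{S} \cong \bigwedge W^-$ sending $I \mapsto 1$, left Clifford multiplication by $\alpha_s^\dagger = \pi^-(\alpha_s) \in W^-$ corresponds to left exterior multiplication $\alpha_s^\dagger \wedge (\cdot)$; while $\pi^+(a_s^\top) \in W^+$ annihilates the vacuum $I$ (since $f_j I = 0$) and, by an inductive use of the Grassmann relations (\ref{ff}) to move it past each creation operator, acts as twice the contraction $\iota_\phi$ with the functional $\phi \in (W^-)^*$ defined by $\phi(u^\dagger) := B_\bC(\pi^+(a_s^\top), u^\dagger)$. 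Hence
\[
    \tau_s = 1 - \frac{4}{\langle a_s,\alpha_s\rangle}\,\alpha_s^\dagger \wedge \iota_\phi \qquad \text{on } \bigwedge W^-.
\]

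Next I would verify the single numerical identity $\{\alpha_s^\dagger,\,\pi^+(a_s^\top)\} = \langle a_s,\alpha_s\rangle$ in $\mathcal{C}$: this falls out of isotropy $B_\bC(W^\pm,W^\pm)=0$ together with the defining relation $\langle \xi^\top,\eta\rangle = B_\bC(\eta,\xi)$, which combine to give $2B_\bC(\pi^-(\alpha_s),\pi^+(a_s^\top)) = B_\bC(\alpha_s, a_s^\top) = \langle a_s,\alpha_s\rangle$. Rephrased, this says exactly that the rescaled functional $\psi := 4\phi/\langle a_s,\alpha_s\rangle$ satisfies $\psi(\alpha_s^\dagger) = 2$. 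As a by-product, $(\alpha_s^\dagger)^2 = 0$ together with the same identity collapses the cross term in the expansion of $\tau_s^2$, yielding $\tau_s^2 = 1$.

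The final step is to invoke the elementary exterior-algebra lemma that for any $v \in V$ and $\psi \in V^*$ with $\psi(v) = 2$, the operator $1 - v\wedge\iota_\psi$ on $\bigwedge V$ coincides with the exterior power of the linear involution $r = \textup{id}_V - v\cdot\psi$ on $V$. The check is a one-line expansion on a decomposable $u_1\wedge\cdots\wedge u_k$: terms with two or more $v$-factors vanish because $v\wedge v = 0$, and the single-$v$ contributions collect into $-v\wedge\iota_\psi(u_1\wedge\cdots\wedge u_k)$ after moving $v$ to the front. Applied to $V = W^-$, $v = \alpha_s^\dagger$ and our $\psi$, this exhibits $\tau_s$ as the exterior power of $u^\dagger \mapsto u^\dagger - \psi(u^\dagger)\alpha_s^\dagger$. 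A last bookkeeping step, using $(\xi,\eta) = B_\bC(\xi,\theta(\eta))$ and the proportionality between $\pi^+(a_s^\top)$ and $\theta(\alpha_s^\dagger) = \pi^+(\alpha_s)$, rewrites this linear map as
\[
u^\dagger \longmapsto u^\dagger - \frac{2(u^\dagger,\alpha_s^\dagger)}{(\alpha_s^\dagger,\alpha_s^\dagger)}\,\alpha_s^\dagger,
\]
which is the Hermitian reflection on $W^-$ with respect to $\alpha_s^\dagger$. The main obstacle is this last identification of pairings, namely checking that the Clifford-algebra functional $\phi$ really is the Hermitian dual of $\alpha_s^\dagger$ up to the correct scalar; once the three structures $B_\bC$, $h^\pm$ and $\top$ are aligned, the remaining ingredients are routine.
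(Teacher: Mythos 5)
Your argument is correct, and it reaches the same conclusion by a genuinely different route for the higher exterior powers. The paper's proof first does the degree-one computation (which is what your final paragraph reproduces, after aligning $B_\bC$, $(\cdot,\cdot)$ and $\top$) and then, for $\mathbb{S}^p$ with $p>1$, picks a basis $\{w_1^\dagger,\dots,w_n^\dagger\}$ of $W^-$ adapted to $\alpha_s^\dagger$ and checks the sign directly on each decomposable element. You instead package $\tau_s$ as $1-v\wedge\iota_\psi$ and invoke the general fact that such an operator is the full exterior power $\Lambda r$ of the rank-one map $r=\mathrm{id}-v\cdot\psi$; once $r$ is identified as the Hermitian reflection on $W^-\cong\mathbb{S}^1$, the higher-degree statement comes for free. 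This is a cleaner and more structural argument: it avoids the adapted-basis bookkeeping, and the normalisation $\psi(\alpha_s^\dagger)=2$ is exactly what both makes $r$ an involution and kills the cross term in $\tau_s^2$. The one point worth flagging is that your exterior-algebra lemma ($\Lambda^k r = 1-v\wedge\iota_\psi$) actually holds for any $\psi$ with no constraint; the condition $\psi(v)=2$ is only what you need to conclude $r$ and $\tau_s$ are involutions and that $r$ is a reflection. Your verification of the key numerical identity $\{\alpha_s^\dagger,\pi^+(a_s^\top)\}=\langle a_s,\alpha_s\rangle$ via isotropy of $W^\pm$ and the definition of $\top$ is also correct, and matches the constants $B(\alpha_s,\alpha_s)a_s^\top=\langle a_s,\alpha_s\rangle\alpha_s$ and $B(\alpha_s,\alpha_s)=2(\alpha_s^\dagger,\alpha_s^\dagger)$ used in the paper.
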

		
		\begin{proof}
			From the realisation in (\ref{e:spinor}), one computes the action on a generator in $\mathbb{S}^1$
			\[
			\tau_s(f_j^\dagger I) 
			= f_j^\dagger I - \frac{2 (f_j^\dagger,\alpha_s^\dagger)}{(\alpha_s^\dagger,\alpha_s^\dagger)}\;\alpha_s^\dagger I,
			\]
			where, abusing the notation, $(f_j^\dagger,\alpha_s^\dagger) = (\pi^-(\xi_j),\pi^-(\alpha_s)) = B_\mathbb{C}(\pi^-(\xi_j),\pi^+(\alpha_s))$
			and $(\alpha_s^\dagger,\alpha_s^\dagger)  = (\pi^-(\alpha_s),\pi^-(\alpha_s)) = B_\mathbb{C}(\pi^-(\alpha_s),\pi^+(\alpha_s) )$.  It follows that $\tau_s$ acts as the desired
			reflection on $\mathbb{S}^1$. For $p>1$, choose a basis $\{w_1^\dagger,\ldots,w_n^\dagger\}$ of $W^-$ such that $w_1^\dagger = \alpha_s^\dagger$ and the rest is a basis for the kernel of the functional $(\alpha_s^\dagger)^*$ on $W^-$. Then, it is straight-forward to check for any basis element of $\mathbb{S}^p$, we have
			\[
			\tau_s(w_{i_1}^\dagger\wedge\cdots\wedge w_{i_p}^\dagger I) = w_{i_1}^\dagger\wedge\cdots\wedge w_{i_p}^\dagger I
			\]
			if $i_j > 1$ for all $j = 1, \ldots, p$ and
			\[
			\tau_s(w_1^\dagger\wedge\cdots\wedge w_{i_p}^\dagger I) = w_{i_1}^\dagger\wedge\cdots\wedge w_{i_p}^\dagger = -w_1^\dagger\wedge\cdots\wedge w_{i_p}^\dagger I,
			\]
			if $i_1 = 1.$ This finishes the proof.
		\end{proof}
		
		By means of the elements~\eqref{e:taus}, we define 
		\begin{equation}\label{e:Omega}
			\Omega_c = \sum_sc(s)s\otimes \tau_s \in H_c(G,E) \otimes \mathcal C\rlap{\,.}
		\end{equation}
		Extending the complex-conjugation automorphism $\theta$ of $E_\bC$ to a
		conjugate-linear involution on $H_c(G,E)\otimes \mathcal{C}$, we note that $\theta(\Omega_c) = -\Omega_c$. On the other hand, with respect to the anti-involution $\ast$ on $H_c(G,E)\otimes \mathcal{C}$, we have $\Omega_c^\ast = \Omega_c$.

		

		\subsection{Lie superalgebra}
		
		In the classical case, there are nine generators of the $\mathrm{U}(n)$-invariants in the tensor product of the Weyl and Clifford algebra, and they yield a realisation of the real form $\mathfrak{u}(2\vert1)$ of the Lie superalgebra $\mathfrak{gl}(2\vert1)$. We show that this realisation is preserved in the deformation $H_c(G,E) \otimes \mathcal{C}$. The classical realisation is obtained by setting $c=0$. Note that in~\cite{BDES} the central element $\beta - \mathbb{E}_{z^c} + \mathbb{E}_{z}$ (using their notation) can be added to the realization of $\mathfrak{sl}(2\vert1)$ to get $\mathfrak{gl}(2\vert1)$.

		\begin{Thm}\label{u21}
			We have a realisation of the real form $\mathfrak{u}(2\vert1)$ of the Lie superalgebra $\mathfrak{gl}(2\vert1)$,
			as a $\ast$-subalgebra of $H_c(G,E) \otimes \mathcal{C}$.
			It is generated by the odd elements
			\begin{align*}
				F_+ & = \sum_{j=1}^n \bar z_j  \otimes f_j^{\dagger}\rlap{\,,} & 
				\bar{F}_+ & = \sum_{j=1}^n z_j \otimes f_j\rlap{\,,} &
				F_- &  = 2\sum_{j=1}^n \zeta_j\otimes f_j^{\dagger} \rlap{\,,}  &
				\bar{F}_- &  = 2\sum_{j=1}^n   \bar \zeta_j \otimes f_j\rlap{\,.}
			\end{align*}
			The even subalgebra consists of the elements $E_{\pm} \otimes 1$, $H \otimes 1$, and 
			\begin{equation}\label{Z}
				\begin{aligned}
					Z_1	 & =   Z_0 - \Omega_c,\\
					Z_2 & =  \frac12 \sum_{j=1}^n [f_j^{\dagger},f_j] = \sum_{j=1}^n  f_j^{\dagger}f_j - \frac{n}{2}\rlap{\,,}
				\end{aligned}    
			\end{equation}
			using the definitions of Proposition~\ref{u11}, and~(\ref{e:Omega}) for $\Omega_c$. 
			
			The combination $Z_1-Z_2$ is central and denoting $ Z  := 2\,Z_2 -  Z_1 $,
			we have the following relations for the $\mathfrak{sl}(2\vert1)$-subalgebra:
			\begin{align}\label{evenrel}
				[H, E_{\pm}] &= \pm 2E_{\pm}	\rlap{\,,}
				& [E_+ ,E_- ] &= H\rlap{\,,}
				& [Z,H] &= 0 \rlap{\,,}
				&	[Z,E_{\pm}  ] & = 0 \rlap{\,.} 
			\end{align}
			%
			\begin{align}\label{oddrel1}
				\{F_{\pm}  ,F_{\pm}  \} &= 0\rlap{\,,} 
				& \{\bar{F}_{\pm}  ,\bar{F}_{\pm}  \}&= 0\rlap{\,,} 
				& \{F_{\pm}  ,\bar{F}_{\pm}  \} & = \pm 2E^{\pm}\rlap{\,,} \\
				\{F_{+}  ,F_{-}  \} & = 0\rlap{\,,} \label{oddrel2}
				& \{\bar{F}_{+}  ,\bar{F}_{-}  \} & = 0\rlap{\,,} 
				& \{F_{\pm}  ,\bar{F}_{\mp}  \} & = H\mp Z  \rlap{\,.} 		
			\end{align}
			For the interaction between the even and odd elements, we have
			\begin{align}\label{relevenodd2}
				[E_{\pm}  ,F_{\mp}  ] &= -F_{\pm}\rlap{\,,} 
				& [E_{\pm}  , \bar{F}_\mp  ] &= -\bar{F}_{\pm}\rlap{\,,}
				&[E_{\pm}  ,F_{\pm}  ] &= 0\rlap{\,,}
				& [E_{\pm}  , \bar{F}_{\pm}  ]	&= 0\rlap{\,,}
				\\  \label{relevenodd}
				[H,F_{\pm}  ] &= \pm   F_{\pm}\rlap{\,,} & 
				[H, \bar{F}_{\pm}  ]
				& = \pm   \bar{F}_{\pm}\rlap{\,,} & 
				[Z,F_{\pm}  ] &= F_{\pm}\rlap{\,,}  & 
				[Z, \bar{F}_{\pm}  ]& =  -\bar{F}_{\pm}\rlap{\,.} 
			\end{align}
			We have the $\ast$-relations $F_\pm^* = \bar{F}_{\mp}$, $E_{\pm}^* = -E_{\mp}$, $H^*=H$, $Z_1^*=Z_1$, $Z_2^*=Z_2$.
		\end{Thm}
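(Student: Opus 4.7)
My plan is to organise the verification of the many relations in Theorem~\ref{u21} around three layers of difficulty: relations inherited from earlier results, bracket relations requiring a short direct computation in $H_c(G,E)\otimes\mathcal{C}$, and finally the single hard identity that actually forces the definition of $\Omega_c$. The $\ast$-relations $F_\pm^*=\bar F_\mp$, $E_\pm^*=-E_\mp$, $H^*=H$, $Z_2^*=Z_2$ follow from $\xi_j^*=x_j$, $f_j^*=f_j^\dagger$ and $s^*=s$, after observing that the embedding $\alpha_s\mapsto \alpha_s$ into $\mathcal{C}$ is compatible with $\ast$ and hence $\Omega_c^*=\Omega_c$, giving $Z_1^*=Z_1$.

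For the even subalgebra (\ref{evenrel}), I would first recall from Proposition~\ref{sl2} and Theorem~\ref{u11} that $H,E_\pm$ close into $\mathfrak{sl}(2)$. Since $Z_2\in 1\otimes\mathcal{C}$ it commutes with $H\otimes 1$ and $E_\pm\otimes 1$ trivially. For $Z_1=Z_0-\Omega_c$, the $Z_0$ part is central in $\mathfrak{u}(1,1)$ by Theorem~\ref{u11}. The element $\Omega_c$ commutes with $H\otimes 1$ and $E_\pm\otimes 1$ because these are $G$-invariant in $H_c(G,E)$ (Proposition~\ref{p:gpinvariance}), hence commute with each reflection $s$; the Clifford factor $\tau_s$ commutes with scalars. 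This gives (\ref{evenrel}). The $\mathbb{Z}_2$-grading of $\mathcal{C}$ makes the odd elements genuinely odd, so all brackets involving odd generators should be read as anticommutators.

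The odd-odd relations in (\ref{oddrel1}) are immediate: $\{f_j^\dagger,f_k^\dagger\}=0=\{f_j,f_k\}$ together with $[\bar z_j,\bar z_k]=0=[\bar\zeta_j,\bar\zeta_k]$ give $\{F_\pm,F_\pm\}=\{\bar F_\pm,\bar F_\pm\}=0$; the mixed anticommutators $\{F_\pm,\bar F_\pm\}$ collapse via $\{f_j^\dagger,f_k\}=\delta_{jk}$ to produce the polynomial (or Dunkl-squared) bilinears $\pm 2E_\pm$. The first two relations of (\ref{oddrel2}), $\{F_+,F_-\}=\{\bar F_+,\bar F_-\}=0$, use Lemma~\ref{Lemma1.2} to pair off the symmetric part of the Dunkl commutators against the antisymmetric $f_j^\dagger f_k^\dagger$. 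The main obstacle lies in the mixed relations $\{F_\pm,\bar F_\mp\}=H\mp Z$. Expanding $\{F_+,\bar F_-\}=2\sum_{j,k}(\bar z_j\bar\zeta_k\otimes f_j^\dagger f_k+\bar\zeta_k\bar z_j\otimes f_kf_j^\dagger)$ and applying $\{f_j^\dagger,f_k\}=\delta_{jk}$ yields
\[
\{F_+,\bar F_-\}=2\sum_j\bar\zeta_j\bar z_j\otimes 1+2\sum_{j,k}[\bar z_j,\bar\zeta_k]\otimes f_j^\dagger f_k.
\]
The diagonal term is rewritten using the definitions of Theorem~\ref{u11} as $H-Z_0-2Z_2$ plus a reflection contribution; the key computation is to show that the off-diagonal term, via the Cherednik relation (\ref{RC}) applied to $[\bar z_j,\bar\zeta_k]=\tfrac{1}{2}([\xi_j,x_k]+\cdots)$, reorganises into exactly $\Omega_c$ after recognising $\sum_{j,k}\langle\bar z_j,\alpha_s\rangle\langle a_s,\bar\zeta_k\rangle f_j^\dagger f_k$ as $\pi^-(\alpha_s)\pi^+(a_s^\top)/\langle a_s,\alpha_s\rangle$ up to the normalisation defining $\tau_s$ in (\ref{e:taus}). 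Combining yields $H-Z$; the conjugate case $\{F_-,\bar F_+\}=H+Z$ then follows by applying $\theta$ and using $\theta(\Omega_c)=-\Omega_c$.

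The mixed even–odd relations (\ref{relevenodd2})–(\ref{relevenodd}) are obtained from Lemma~\ref{Lemma1.4} term by term: the action of $E_\pm$ and $H$ on $z_j,\bar z_j,\zeta_j,\bar\zeta_j$ lifts diagonally through the tensor with the Clifford factor. For $[Z,F_\pm]$, the $Z_2$ contribution gives $\pm 1$ from $[f_j^\dagger f_j,f_k^\dagger]=\delta_{jk}f_k^\dagger$, while the $Z_0$ and $\Omega_c$ contributions cancel on the $F_\pm,\bar F_\pm$ generators after a direct computation analogous to Proposition~\ref{p:adZ0} combined with $\rho(s)f_k^\dagger\rho(s)^{-1}$-type conjugations. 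Finally, centrality of $Z_1-Z_2$ is established by verifying $[Z_1-Z_2,F_\pm]=0=[Z_1-Z_2,\bar F_\pm]$, which reduces to the identity that the $\Omega_c$-part of $[Z_1,F_\pm]$ matches $[Z_2,F_\pm]$ — a consequence of the same reflection-summation trick used in the main obstacle above. The hard step throughout is therefore the identification of the off-diagonal Dunkl sum with $\Omega_c$; once that is in place, everything else is bookkeeping.
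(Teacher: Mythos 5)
Your overall strategy mirrors the paper's: the odd--odd relations and the mixed even--odd relations~\eqref{relevenodd2}--\eqref{relevenodd} (apart from $Z$) come for free from Lemmas~\ref{Lemma1.1}, \ref{Lemma1.4}, \ref{Lemma1.2} and the Clifford relations~\eqref{ff}, and the single hard identity is $\{F_+,\bar F_-\}=H-Z$, established exactly as you describe: expand with $\{f_j^\dagger,f_k\}=\delta_{jk}$, feed the Cherednik relation~\eqref{RC} into $[\bar z_j,\bar\zeta_k]$, and recognise the sum $\sum_{j,k}\langle\bar z_j,\alpha_s\rangle\langle a_s,\bar\zeta_k\rangle f_j^\dagger f_k$ as $\pi^-(\alpha_s)\pi^+(a_s^\top)$, which matches $\tau_s$ in~\eqref{e:taus}. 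Using $\theta$ with $\theta(\Omega_c)=-\Omega_c$, $\theta(Z_0)=-Z_0$, $\theta(Z_2)=-Z_2$ to deduce $\{F_-,\bar F_+\}=H+Z$ from the first identity is a legitimate shortcut (the paper instead computes both anticommutators and takes sum and difference).

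Where your plan diverges, however, it contains errors. For $[Z,F_\pm]$ and the centrality of $Z_1-Z_2$ the paper does \emph{not} compute $[Z_0,F_\pm]$ or $[\Omega_c,F_\pm]$ directly. Instead, once $\{F_\pm,\bar F_\mp\}=H\mp Z$ is in hand, it writes $Z=\pm H\mp\{F_\pm,\bar F_\mp\}$ and invokes the super Jacobi identity together with $\{F_\pm,F_\pm\}=0$ to get $[Z,F_\pm]=F_\pm$ for free; combining this with the trivial Clifford computation $[Z_2,F_\pm]=F_\pm$, $[Z_2,\bar F_\pm]=-\bar F_\pm$ and the identity $Z_1-Z_2=Z_2-Z$ then yields centrality of $Z_1-Z_2$ without ever touching $\Omega_c$ again. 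Your proposed direct computation is in principle possible, but your specific claims are wrong: the $Z_2$ contribution to $[Z,F_\pm]$ is $+2F_\pm$ in both cases, not ``$\pm 1$'' (both $F_+$ and $F_-$ carry $f_j^\dagger$), and the $Z_0$ and $\Omega_c$ contributions do not ``cancel'' --- they combine to $[Z_1,F_\pm]=F_\pm$, which then subtracts against the excess $2F_\pm$ from $Z_2$. Likewise, in your description of the mixed anticommutator the sign on $Z_0$ is flipped: one gets $\{F_+,\bar F_-\}=H+Z_0-2Z_2-\Omega_c$, not $H-Z_0-2Z_2$ plus a reflection term (the latter would be $\{F_-,\bar F_+\}$). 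None of these slips is an irreparable gap in the strategy, but as written they would derail the verification of the $[Z,\cdot]$ relations.
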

		
		In terms of orthogonal vectors
		$\epsilon_1,\epsilon_2,\delta_1$, dual
		to the Cartan algebra elements $(Z_1+H)/2$, $(Z_1-H)/2$, and $Z_2$, so $\epsilon_1+\epsilon_2-\delta_1$ is dual to the central element $Z_1-Z_2$, 
		the associated root vectors are
		\begin{align*}
			\epsilon_1 - \epsilon_2 &\leftrightarrow E_{+}\,,&
			-\epsilon_2 +\delta_1  &\leftrightarrow F_{+}\,,&
			\epsilon_1 - \delta_1 &\leftrightarrow \bar F_{+}\,,
			\\
			-\epsilon_1 + \epsilon_2 &\leftrightarrow E_{-}\,,&
			\epsilon_2- \delta_1  &\leftrightarrow \bar F_{-}\,,&
			-\epsilon_1+\delta_1   &\leftrightarrow F_{-}\,.
		\end{align*}
		\begin{proof} The relations~\eqref{evenrel} follow from Proposition~\ref{u11} and the fact that $Z_2$ commutes with $H$ and $E_{\pm}$ as their Clifford algebra part is 1, together with $Z_1-Z_2$ being central in $\mathfrak{u}(2\vert1)$, which we show at the end of the proof.
			
			The first two (pairs of) relations of \eqref{oddrel1} follow immediately using~\eqref{comm} and~\eqref{ff}. 
			For the last pair we compute
			\begin{align*}
				\{F_{+}  ,\bar{F}_{+}  \} & =  \sum_{j=1}^n\sum_{k=1}^n z_j \bar z_k\{ f_j   ,      f_k^{\dagger}\}
				=  \sum_{j=1}^n z_j \bar z_j
				= 2E_+ \rlap{\,,}
				\\
				\{F_{-}  ,\bar{F}_{-}  \} & = 4 \sum_{j=1}^n\sum_{k=1}^n \bar \zeta_j \zeta_k\{ f_j   ,      f_k^{\dagger}\} 
				= 4 \sum_{j=1}^n \bar \zeta_j \zeta_j
				= -2 E_- \rlap{\,.}
			\end{align*}

			For the first two relations of~\eqref{oddrel2}, we have
			\[
			\{F_{+}  , F_{-}  \}  = 2 \sum_{j=1}^n\sum_{k=1}^n \{\bar z_jf_j^{\dagger} , \zeta_k f_k^{\dagger}\} 
			=   2\sum_{j=1}^n\sum_{k=1}^n [\bar z_j,\zeta_k] f_j^{\dagger}f_k^{\dagger} = 0	\rlap{\,,}
			\]
			which vanishes because by Lemma~\ref{Lemma1.2}, $[\bar z_j,\zeta_k]$ is symmetric in $j$ and $k$, while $f_j^{\dagger}f_k^{\dagger}=-f_k^{\dagger}f_j^{\dagger}$ is antisymmetric. In the same manner, 
			\[
			\{\bar{F}_{+}  ,\bar  F_{-}  \}  =  2\sum_{j=1}^n\sum_{k=1}^n \{z_jf_j ,\bar \zeta_k f_k\} 
			= 2 \sum_{j=1}^n\sum_{k=1}^n [z_j,\bar \zeta_k] f_jf_k	 \rlap{\,.}
			\]
			
			The relations~\eqref{relevenodd2} follow directly from~\eqref{comm} and~\eqref{xp}.
			
			The first two relations of~\eqref{relevenodd} follow from~\eqref{Euler}. The interaction with $Z$ follows by means of the other relations and the super Jacobi identity,
			\[
			[Z,F_{\pm}  ] = [\pm H \mp \{F_{\pm}  , \bar{F}_{\mp}\},  F_{\pm} ] 
			= \pm ( \pm   F_{\pm}) 
			\mp\frac12 [   \bar{F}_{\mp} ,  \{F_{\pm},F_{\pm}\}] 
			=   F_{\pm}\rlap{\,,}
			\]
			and similarly $[Z,\bar F_{\pm}  ] = [\mp H \pm \{F_{\mp}  , \bar{F}_{\pm}\}, \bar F_{\pm} ] =\mp(\pm\bar{F}_{\pm})= -\bar F_{\pm}  $. 
			
			This leaves only the last set of relations of~\eqref{relevenodd} to be proved. 
			Using $f_k  f_j^{\dagger} =  \delta_{jk} - f_j^{\dagger}f_k$, we have
			\begin{equation}\label{e:anticom1}
				\{F_{+}  ,\bar{F}_{-}  \}  = 2\sum_{j=1}^n\sum_{k=1}^n( \bar z_j \bar \zeta_k    f_j^{\dagger}f_k 
				+    \bar \zeta_k \bar z_j f_k        f_j^{\dagger}    ) 
				=   2\sum_{j=1}^n \bar \zeta_j\bar z_j      
				-  2\sum_{j=1}^n\sum_{k=1}^n [\bar \zeta_k,\bar z_j] f_j^{\dagger}f_k  \rlap{\,,}
			\end{equation}
			and similarly
			\begin{equation}\label{e:anticom2}
				\{F_{-}  ,\bar{F}_{+}  \} 
				= 2\sum_{j=1}^n\sum_{k=1}^n (\zeta_jz_k f_j^{\dagger}f_k
				+ z_k \zeta_jf_k  f_j^{\dagger})  
				= 2 \sum_{j=1}^n z_j \zeta_j   
				+ 2 \sum_{j=1}^n\sum_{k=1}^n [\zeta_j,z_k] f_j^{\dagger}f_k \rlap{\,.}
			\end{equation}
			Using the relation~$[\zeta_j,z_k] = [\bar \zeta_k,\bar z_j]$ from  Lemma~\ref{Lemma1.2}, and the computation~\eqref{H},  we arrive at $ \{F_{+}  ,\bar{F}_{-}  \}+\{F_{-}  ,\bar{F}_{+}  \} =   2H$, as defined in Proposition~\ref{u11}. 
			
			Next, we show $\{F_{-}  ,\bar{F}_{+}  \} - \{F_{+}  ,\bar{F}_{-}  \} =   2Z$. By means of relation~\eqref{RC}, we have
				\begin{align*}
					\sum_{j=1}^n\sum_{k=1}^n [\bar \zeta_k,\bar z_j] \otimes f_j^{\dagger}f_k 
					& =    \sum_{j=1}^n  f_j^{\dagger}f_j - \sum_{s\in\mathcal S} c(s)s 
					\bigg(\sum_{j=1}^n\lpi \bar z_j,\alpha_s  \rpi f_j^{\dagger} \bigg) 
					\bigg(\sum_{k=1}^n \lpi \alpha_s^\vee,\bar\zeta_k \rpi  f_k\bigg) \rlap{\,,}
					\\   & =    \sum_{j=1}^n  f_j^{\dagger}f_j - \frac12\sum_{s\in\mathcal S}c(s)s\,  \alpha_s^\dagger(\alpha_s^\dagger)^\vee  \rlap{\,.}
				\end{align*}
				where we used~\eqref{e:asd}. 
				Hence, using (\ref{e:anticom1}), (\ref{e:anticom2}) and 
				\[
				\sum_{j} \oze_j\oz_j = \sum_{j} \oz_j\oze_j + n - \sum_s c(s)s\rlap{\,,}
				\] 
				which follows by~\eqref{RC}, we get
				\[
				\frac12(\{F_{-}  ,\bar{F}_{+}  \} - \{F_{+}  ,\bar{F}_{-}  \})
				=2\sum_{j=1}^n  f_j^{\dagger}f_j -n - Z_0  + \Omega_c\rlap{\,.}
				\]
				
				Finally, we show that $Z_1 - Z_2$ is central. From the adjoint action of $ \sum_j f_j^{\dagger}f_j $ on the Clifford algebra $\mathcal C$ we obtain
				\begin{equation}\label{e:commZ2}
					[ Z_2 , F_{\pm} ] =  F_{\pm} , \qquad [ Z_2 , \bar F_{\pm} ] = - \bar F_{\pm}\rlap{\,.}
				\end{equation}
				Comparing with the last two relations of \eqref{relevenodd}, and since $Z =Z_2 -( Z_1 - Z_2 ) $, we conclude that $Z_1 - Z_2$
				commutes with $F_{\pm},\bar F_{\pm}$. Note that this also implies the same adjoint action of $Z_1$:
				\begin{equation}\label{e:commZ1}
					[ Z_1 , F_{\pm} ] =  F_{\pm} , \qquad [ Z_1 , \bar F_{\pm} ] = - \bar F_{\pm}\rlap{\,.}
				\end{equation}
				The $\ast$-relations follow by straight-forward application of the  anti-involution $\ast$ on $H_c(G,E)\otimes \mathcal{C}$.
			\end{proof}

			
			The $\mathfrak{u}(2\vert1)$ central element $Z_1 - Z_2$ can be written also in terms of elements in the (super)centraliser of the $\mathfrak{spo}(2\vert1)= \mathfrak{osp}^{sk}(1\vert2) \cong \mathfrak{osp}(1\vert2)$ subalgebra generated by the two odd elements $D_+ := F_+ + \bar F_+ $ and $D_- := F_- + \bar F_- $. The latter satisfy the following relations, with $H$ as defined in Proposition~\ref{u11}, 
			\begin{equation}\label{spo12}
				\{D_+,D_-\} = 2H, \qquad  [H, D_{\pm} ] = \pm D_{\pm}\rlap{\,.}
			\end{equation}
			In~\cite{DOV}, it was obtained that for distinct $j,k\in \{1,\dotsc,N\}$ 
			\begin{equation}\label{Ojk}
				O_{jk}   = x_j \xi_k - x_k \xi_j + \frac12 e_j e_k + \sum_{s\in\mathcal S}  c(s)\frac{s\,  \iota(\alpha_s)}{\|\alpha_s\|^2}  (  e_j \langle x_k , \alpha_s  \rangle   - e_k\langle x_j , \alpha_s  \rangle  )
			\end{equation}
			is in the $\mathfrak{spo}(2\vert1)$ (super)centraliser.
			
			For $j\in \{1,\dotsc,n\}$ and $k=n+j$, the elements of the form~\eqref{Ojk} can be written in terms of the complex variables as follows
			\[
			i \, O_{j,n+j} = (\bar z_j \bar \zeta_j - z_j \zeta_j ) + \frac12 \,-\,f_j^{\dagger}f_j   -\sum_{s\in\mathcal S}  c(s)\frac{s\,  \iota(\alpha_s)}{\|\alpha_s\|^2}  ( \langle\bar z_j, \alpha_s\rangle   f_j^{\dagger} 
			-\langle z_j , \alpha_s \rangle f_j 
			)\rlap{\,.} 
			\]    
			Summing over $j\in \{1,\dotsc,n\}$, and comparing with the definitions~\eqref{Z}, we obtain 
			\[
			Z_1 - Z_2 = i\sum_{j=1}^n O_{j,n+j}\rlap{\,.} 
			\]
			


			\subsection{Spinor decomposition} 
			
			
			Recall that the action of $G$ on $E$ induces an action on $H_c(G,E) \otimes \mathcal C$ by outer automorphisms, since the copy of $G$ inside $H_c(G,E)$ does not interact with $\mathcal C$. 
			However, the realisation of the double cover $\tilde G$ given by $\rho\colon \tilde G\to H_c(G,E) \otimes \mathcal C$ does provide an inner action, see the discussion in Section~\ref{s:reflgroupclifford}.
			
			\begin{Propo}\label{rhoGJ}
				The $\mathfrak{u}(2\vert1)$ Lie superalgebra realisation of Theorem~\ref{u21} is invariant under the action of the group $G^J= G \cap \mathrm{U}(n)$, and supercommutes with the realisation of 
				$(G^J)^{\widetilde{}}=p^{-1}(G^J)$ in $H_c(G,E) \otimes \mathcal{C}$. 
			\end{Propo}
			\begin{proof}
				Besides the previously introduced notations~\eqref{notat}, consider the $n\times 1$ column matrices
				\[
				f = \begin{pmatrix} f_1  \\ \vdots  \\  f_n\end{pmatrix} , 
				\quad f^{\dagger} = \begin{pmatrix} f^{\dagger}_1 \\ \vdots  \\  f^{\dagger}_n\end{pmatrix}.
				\]
				Using tensor products as multiplication, we can write 
				$
				F_+  =     \bar z^T f^{\dagger} = z^* f^{\dagger} 
				$, and the other odd elements have similar expressions. 
				
				The action of an element of $ g\in G^J$ on $z$, $\bar \zeta$ and $f^{\dagger}$ corresponds to multiplication by a unitary matrix $A$,  and in turn the action on $\bar z$, $\zeta$ and $f$ will be given by the conjugate transpose matrix $A^*$. In this way, we find for $F_+$: 
				\[
				g \cdot F_+ 
				=  \left( A  z\right)^* A f^{\dagger}
				=    z^* A^*  A f^{\dagger}
				= F_+\rlap{\,.}
				\]
				The invariance of the central element $Z_1 - Z_2$ follows from the invariance of on the one hand $Z_2 = (f^{\dagger})^T f - n/2$, using the same argument as above, and on the other hand of $Z = (\{F_{+}  ,\bar{F}_{-}  \}-\{F_{-}  ,\bar{F}_{+}  \} )/2$. 
				
				The supercommuting property then follows by~\eqref{e:rhotg}.
			\end{proof}

			


			We consider the space $K_c(\tau) = M_c(\tau) \otimes \mathbb{S}$. We endow it with the Hermitian structure $(\cdot|\cdot)$ given as the product of the Hermitian structures $\beta_{c,\tau}(\cdot,\cdot)$ and $(\cdot,\cdot)_\mathbb{S}$ on $M_c(\tau)$ and on $\mathbb{S}$, as described in Sections \ref{s:unitary} and \ref{s:clifford}, respectively.
			
			\begin{Propo}
				The pair $(K_c(\tau),(\cdot|\cdot))$ is a $\ast$-Hermitian module for the Lie superalgebra $\mathfrak{u}(2\vert1)$ inside $H_c(G,E)\otimes\mathcal{C}$.
			\end{Propo}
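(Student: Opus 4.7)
The plan is to prove the stronger statement that $(K_c(\tau),(\cdot|\cdot))$ is a $\ast$-Hermitian module for the full tensor product algebra $H_c(G,E)\otimes \mathcal{C}$, and then simply restrict to the $\mathfrak{u}(2\vert 1)$-subalgebra of Theorem~\ref{u21}. The key observation driving the whole argument is a double factorisation: the Hermitian form on $K_c(\tau)$ factors as the product $(\cdot|\cdot)=\beta_{c,\tau}\otimes(\cdot,\cdot)_\mathbb{S}$, and the anti-involution $\ast$ on $H_c(G,E)\otimes\mathcal{C}$ factors as $(h\otimes a)^\ast=h^\ast\otimes a^\ast$. So $\ast$-contravariance reduces to $\ast$-contravariance on each tensor factor separately.

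First, I would record how $\ast$ acts on the Clifford generators. Under the identifications used in Section~\ref{s:clifford}, $e_j$ corresponds to the orthonormal basis vector $\xi_j\in E$ and, via $\top$, to the dual basis element $x_j\in E^\ast$; since $\xi_j^\ast=x_j$, one has $e_j^\ast=e_j$. The definition~\eqref{fj} and the conjugate-linearity of $\ast$ then give $f_j^\ast=f_j^\dagger$ and $(f_j^\dagger)^\ast=f_j$. Combining the property $(f_j^\dagger\cdot u,w)_\mathbb{S}=(u,f_j\cdot w)_\mathbb{S}$ stated in Section~\ref{s:clifford} with its immediate conjugate $(f_j\cdot u,w)_\mathbb{S}=(u,f_j^\dagger\cdot w)_\mathbb{S}$, a straightforward induction on monomial length in the $f_j,f_j^\dagger$ yields the global Clifford contravariance $(a\cdot u,w)_\mathbb{S}=(u,a^\ast\cdot w)_\mathbb{S}$ for every $a\in\mathcal{C}$ and all $u,w\in\mathbb{S}$.

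Combining this with the contravariance of $\beta_{c,\tau}$ recalled in Section~\ref{s:unitary}, namely $\beta_{c,\tau}(hp,q)=\beta_{c,\tau}(p,h^\ast q)$ for all $h\in H_c(G,E)$ and $p,q\in M_c(\tau)$, I would check contravariance on simple tensors. For $h\otimes a\in H_c(G,E)\otimes \mathcal{C}$ and $p\otimes u,\,q\otimes w\in K_c(\tau)$ one has
\begin{align*}
((h\otimes a)(p\otimes u)\,|\,q\otimes w)
&=\beta_{c,\tau}(hp,q)\,(a\cdot u,w)_\mathbb{S} \\
&=\beta_{c,\tau}(p,h^\ast q)\,(u,a^\ast\cdot w)_\mathbb{S} \\
&=(p\otimes u\,|\,(h^\ast\otimes a^\ast)(q\otimes w)).
\end{align*}
Since every element of $H_c(G,E)\otimes\mathcal{C}$ is a finite sum of simple tensors and every element of $K_c(\tau)$ is a finite sum of simple tensors, this extends by $\bC$-linearity in the left slot and conjugate-linearity in the right slot to the required contravariance identity on the whole algebra.

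Restricting to the generators of the $\mathfrak{u}(2\vert 1)$-realisation and invoking the $\ast$-relations $F_\pm^\ast=\bar F_\mp$, $E_\pm^\ast=-E_\mp$, $H^\ast=H$, $Z_1^\ast=Z_1$, $Z_2^\ast=Z_2$ from Theorem~\ref{u21} (which in particular shows that $\mathfrak{u}(2\vert 1)$ is closed under $\ast$), the result follows. There is no genuine obstacle here; the only point that requires a moment of care is verifying $f_j^\ast=f_j^\dagger$, so that the Clifford contravariance stated factor-by-factor in Section~\ref{s:clifford} is indeed compatible with the global $\ast$-operation on $H_c(G,E)\otimes \mathcal{C}$.
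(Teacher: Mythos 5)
Your proof is correct and takes the same route as the paper: you establish $\ast$-contravariance for the full tensor-product algebra $H_c(G,E)\otimes\mathcal{C}$ by combining the contravariance of $\beta_{c,\tau}$ with that of $(\cdot,\cdot)_\mathbb{S}$, and then restrict to the $\ast$-subalgebra $\mathfrak{u}(2|1)$. The paper's proof is terser (it omits the verification that $e_j^\ast = e_j$, hence $f_j^\ast = f_j^\dagger$, and the induction to global Clifford contravariance), but the logic is identical; your fleshed-out version is a faithful and more explicit rendering of the same argument.
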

			
			\begin{proof}
				Using the contravariance properties of the forms $\beta_{c,\tau}(\cdot,\cdot),(\cdot,\cdot)_\mathbb{S}$ on $M_c(\tau)$ and on $\mathbb{S}$
				we get that $K_c(\tau)$ is a $\ast$-Hermitian $H_c(G,E)$ module. Since 
				$\mathfrak{u}(2\vert1)$ is a $\ast$-subalgebra, the claim follows.
			\end{proof}
			
			Furthermore, since $Z_1$ commutes with $E_-$ and with $Z_2$, it induces an operator on the spaces $\mathcal{H}_c(\tau)_k^l = \mathcal{H}_c(\tau)_k\otimes \mathbb{S}^l$,  
			where $\mathcal{H}_c(\tau)_{k}$ is the intersection of the degree-$k$ part of $M_c(\tau)$ with the kernel of $E_-$, and $\mathbb{S}^l$ is the space of spinors of degree $l$. 
			Let $\mathscr{S}_c(\tau)_k^l$ denote the set
			of eigenvalues of $Z_1$ acting on  $\mathcal{H}_c(\tau)_k^l$.
			By means of $Z_1^* = Z_1$ and $\theta(Z_1) = - Z_1$, similar to Proposition \ref{p:ZnaughtProp}, we have
			
			\begin{Propo}\label{e:ZProp}
				Assume $c$ is real-valued and sufficiently close to $c=0$ such that $K_c(\tau)$ is unitarisable. Then, the operator $Z_1$ restricted to $\mathcal{H}_c(\tau)_k^l$ is diagonalisable for all $k\geq 0$ and $0\leq l \leq n$. Moreover, $\mathscr{S}_c(\tau)_k^l\subseteq \bR$, and if $\nu\in\mathscr{S}_c(\tau)_k^l$, then $-\nu$ is also in $\mathscr{S}_c(\tau)_k^l$.
			\end{Propo}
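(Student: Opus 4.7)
My plan is to follow the pattern of Proposition~\ref{p:ZnaughtProp}, leveraging the two algebraic identities $Z_1^\ast = Z_1$ and $\theta(Z_1) = -Z_1$ highlighted immediately before the statement. First I would verify that $Z_1$ preserves $\mathcal{H}_c(\tau)_k^l$. From Theorem~\ref{u21}, since $Z_1 - Z_2$ is central in $\mathfrak{u}(2|1)$, and since $Z_2$ lies purely in the Clifford tensor factor while $H$ and $E_\pm$ lie purely in $H_c(G,E)$, one obtains $[Z_1,E_-]=0$, $[Z_1,H]=0$, and $[Z_1,Z_2]=0$. These commutations show that $Z_1$ preserves, respectively, $\ker E_-$, the polynomial degree $k$, and the spinor degree $l$, so $Z_1$ restricts to an endomorphism of the finite-dimensional space $\mathcal{H}_c(\tau)_k^l = \mathcal{H}_c(\tau)_k\otimes \mathbb{S}^l$.

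For the diagonalizability and the reality of the spectrum, the hypothesis on $c$ ensures that the Hermitian form $(\cdot|\cdot)$ on $K_c(\tau)$ is positive-definite; combined with $Z_1^\ast = Z_1$ and the contravariance property of this form, this makes $Z_1|_{\mathcal{H}_c(\tau)_k^l}$ self-adjoint on a finite-dimensional complex inner-product space, so the spectral theorem immediately yields diagonalizability together with $\mathscr{S}_c(\tau)_k^l \subseteq \mathbb{R}$.

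For the symmetry $\nu \mapsto -\nu$, the idea is to produce a conjugate-linear involution $\Theta$ of $K_c(\tau)$ that preserves $\mathcal{H}_c(\tau)_k^l$ and intertwines the algebra action with $\theta$, in the sense that $\Theta(a\cdot v) = \theta(a)\cdot \Theta(v)$ for all $a\in H_c(G,E)\otimes \mathcal{C}$ and $v\in K_c(\tau)$. Taking $a=Z_1$ and using $\theta(Z_1) = -Z_1$ then gives $Z_1\Theta(v) = -\overline{\lambda}\,\Theta(v) = -\lambda\,\Theta(v)$ on a $\lambda$-eigenvector, using the reality of $\lambda$ already established. The main obstacle is constructing $\Theta$ compatibly with the spinor grading: the naive lift of $\theta$ through the embedding $\mathbb{S}\subset\mathcal{C}$ swaps $\mathbb{S}^l \leftrightarrow \mathbb{S}^{n-l}$, since a short computation gives $\theta(Z_2) = -Z_2$. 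One therefore has to combine this lift with a Hodge-type identification between $\bigwedge^l W^-$ and $\bigwedge^{n-l} W^-$, or, equivalently, design a conjugate-linear module involution which agrees with $\theta$ on $Z_1$ while still preserving the decomposition by spinor degree.
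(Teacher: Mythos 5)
Your first two claims (preservation of $\mathcal{H}_c(\tau)_k^l$, diagonalisability, and reality of the spectrum) are handled exactly as the paper intends: they are the direct analogues of item~(1) of Proposition~\ref{p:ZnaughtProp}, using $Z_1^\ast=Z_1$ together with the positive-definiteness of the contravariant form on $K_c(\tau)$. No issues there.

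For the symmetry $\nu\mapsto -\nu$, you have put your finger on a genuine subtlety. The paper's entire ``proof'' of Proposition~\ref{e:ZProp} is the single sentence preceding it, asserting that the argument of Proposition~\ref{p:ZnaughtProp}(2) carries over ``by means of $Z_1^\ast=Z_1$ and $\theta(Z_1)=-Z_1$.'' But, as you observe, any conjugate-linear map on $K_c(\tau)$ that intertwines the module action with the algebra involution $\theta$ will also conjugate $Z_2$ to $-Z_2$ (since $\theta(f_j)=f_j^\dagger$ forces $\theta(Z_2)=-Z_2$), and therefore sends $\mathbb{S}^l$ to $\mathbb{S}^{n-l}$; it does \emph{not} preserve $\mathcal{H}_c(\tau)_k^l$. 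Alternatively, the naive coordinate-conjugation $\theta_M\otimes\kappa$ on $M_c(\tau)\otimes\mathbb{S}$ (with $\kappa$ fixing the $f_j^\dagger$-basis) \emph{does} preserve $\mathcal{H}_c(\tau)_k^l$, but one checks that $\kappa$ commutes with the Clifford action of every $\tau_s$, so this map conjugates $Z_1=Z_0-\Omega_c$ to $-Z_0-\Omega_c$ rather than to $-Z_1$. Either way, the $M_c(\tau)$-case argument does not transplant verbatim. Your proposed remedy --- composing $\theta$ with a Hodge-type identification $\bigwedge^l W^-\cong\bigwedge^{n-l}W^-$ so as to restore the spinor degree while still anti-commuting with $Z_1$ --- is the right kind of idea, but you have not constructed the map nor verified that it sends $\Omega_c$ to $-\Omega_c$, so the argument as written is incomplete. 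To be fair, the paper itself supplies no more detail here than you do, so the gap you identify is really a gap in the paper's sketch as much as in your proposal; closing it would require an explicit construction (or an alternative argument, e.g.\ via the explicit action of $Z_1-Z_2$ in terms of the $O_{jk}$'s and the Scasimir as in Section~\ref{Section:CliffordDihedral}).
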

			
			The weights for the $\mathfrak{u}(2\vert1)$-realisation can be written in terms of the total polynomial degree ($H$-eigenvalue), the spinor degree ($Z_2$-eigenvalue) and the $Z_1$-eigenvalue, which, when $c=0$, is the difference between anti-holomorphic and holomorphic degree. We denote by $\mathcal{H}_c(\tau)_{k,\nu}^l$ the $\nu$-eigenspace of the operator $Z_1$ inside $\mathcal{H}_c(\tau)_{k}^l$ and by $\mathcal{M}_c(\tau)_{k,\nu}^l$ the intersection of $\mathcal{H}_c(\tau)_{k,\nu}^l$ with the kernel of both $F_-$ and $\bar{F}_-$. To ease notation, we will occasionally omit the subscript $c$ and the reference to $\tau$ in what follows, i.e., $\mathcal{H}_{k,\nu}^l =\mathcal{H}_c(\tau)_{k,\nu}^l$ and $\mathcal{M}_{k,\nu}^l =\mathcal{M}_c(\tau)_{k,\nu}^l$.
			
			\begin{Propo}\label{p:MongDecomp}
				Assume the parameter map $c$ is chosen such that $K_c(\tau)$ is unitarisable. Then, 
				the space $\mathcal{H}_c(\tau)_{k,\nu}^l$ decomposes as an orthogonal sum
				\[
				\mathcal{H}_{k,\nu}^l = \mathcal{M}_{k,\nu}^l \oplus F_+ \mathcal{M}_{k-1,\nu-1}^{l-1} \oplus\bar{F}_+\mathcal{M}_{k-1,\nu+1}^{l+1} \oplus ( \lambda_1 F_+\bar{F}_+ - \lambda_2 \bar{F}_+F_+) \mathcal{M}_{k-2,\nu}^l ,
				\]
				with $\lambda_1= k-2 + 2n -N_c(\tau) + \nu -l $ the eigenvalue of $H-Z-2$ acting on $\mathcal{H}_c(\tau)_{k,\nu}^l$ and $\lambda_2= k-2 -N_c(\tau) - \nu +l$ the eigenvalue of $H+Z-2$ acting on $\mathcal{H}_c(\tau)_{k,\nu}^l$.
				
				Note that, when $k=0$, only the first summand of the right-hand side is non-zero, while, for $k=1$, the fourth summand is always zero, for $l=0$, the second summand is zero, and for $l=n$, the third summand is zero. 
			\end{Propo}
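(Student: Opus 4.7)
The plan is to establish three claims in sequence: (a) each of the four summands on the right-hand side lies inside $\mathcal{H}_{k,\nu}^l$; (b) they are pairwise orthogonal under the Hermitian form $(\cdot|\cdot)$; and (c) together they span $\mathcal{H}_{k,\nu}^l$. For (a), the polynomial-degree, spinor-degree and $Z_1$-weight shifts match via $[Z_1,F_\pm]=F_\pm$, $[Z_1,\bar F_\pm]=-\bar F_\pm$, and the gradings for $H$ and $Z_2$. That $F_+\mathcal{M}_{k-1,\nu-1}^{l-1}\subset\ker E_-$ is immediate from \eqref{relevenodd2}: $E_- F_+w_1=-F_-w_1=0$; the same argument covers $\bar F_+\mathcal{M}_{k-1,\nu+1}^{l+1}$. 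For the fourth summand, pushing $E_-$ through using \eqref{relevenodd2}, the anticommutators $\{F_\pm,\bar F_\mp\}=H\mp Z$ in \eqref{oddrel2}, and the vanishings $E_-w_3=F_-w_3=\bar F_-w_3=0$, yields
\begin{align*}
E_- F_+\bar F_+ w_3 &= -(H+Z)\,w_3, & E_- \bar F_+ F_+ w_3 &= -(H-Z)\,w_3.
\end{align*}
Since $H\pm Z$ act on $\mathcal{M}_{k-2,\nu}^l$ by the same scalars as $\lambda_2=(H+Z-2)|_{\mathcal{H}_{k,\nu}^l}$ and $\lambda_1=(H-Z-2)|_{\mathcal{H}_{k,\nu}^l}$ do on $\mathcal{H}_{k,\nu}^l$ (the $-2$ absorbing the two-step polynomial shift), the combination $\lambda_1 F_+\bar F_+-\lambda_2\bar F_+F_+$ is precisely the one killed by $E_-$. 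This is what pins down the coefficients.

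For (b), the six pairwise inner products reduce via the $\ast$-adjoint relations $F_\pm^*=\bar F_\mp$ from Theorem~\ref{u21}. Typical cases: $(F_+w_1|v)=(w_1|\bar F_-v)=0$ for $v\in\mathcal{M}_{k,\nu}^l$; and $(F_+w_1|\bar F_+w_2)=(w_1|\bar F_-\bar F_+w_2)=0$ using $\{\bar F_-,\bar F_+\}=0$ and $\bar F_-w_2=0$. Orthogonality involving the fourth summand is analogous: pushing $\bar F_-$ or $F_-$ through $\lambda_1 F_+\bar F_+-\lambda_2\bar F_+F_+$ via the anticommutators reduces the pairing to scalar multiples of inner products between lower-degree monogenics and $F_-$- or $\bar F_-$-images of monogenics, all of which vanish.

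For (c), I would proceed by induction on $k$. The base case $k=0$ is immediate since $\zeta_j,\bar\zeta_j$ annihilate $M_c(\tau)_0=V(\tau)$, so $F_-=\bar F_-=0$ on $\mathcal{H}_{0,\nu}^l$ and hence $\mathcal H_{0,\nu}^l=\mathcal M_{0,\nu}^l$. For the inductive step the cleanest route uses unitarity: the norm identity
\[
\|F_+ w_1\|^2 = (w_1|\bar F_- F_+ w_1) = (w_1|(H-Z)w_1)
\]
is strictly positive on $\mathcal{M}_{k-1,\nu-1}^{l-1}$ for $c$ sufficiently close to $0$, so $F_+$ is injective there; an analogous formula involving $H+Z$ handles $\bar F_+$, and a more elaborate quadratic expression in $\lambda_1,\lambda_2,H,Z$ handles the combination. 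Together with (b), this realises the right-hand side as an orthogonal direct sum inside $\mathcal H_{k,\nu}^l$. To show this sum fills out $\mathcal H_{k,\nu}^l$, I would argue that if $v\in\mathcal H_{k,\nu}^l$ is orthogonal to the three raised summands then $\bar F_-v\in\mathcal H_{k-1,\nu-1}^{l-1}$ and $F_-v\in\mathcal H_{k-1,\nu+1}^{l+1}$ are each orthogonal to their respective monogenic subspaces; iterating the inductive decomposition and combining with the third orthogonality condition eventually forces $F_-v=\bar F_-v=0$, i.e.\ $v\in\mathcal M_{k,\nu}^l$, and orthogonality with the first summand then yields $v=0$.

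I expect the principal obstacle to be the fourth summand throughout. The identity $(H\pm Z)|_{\mathcal M_{k-2,\nu}^l}=(H\pm Z-2)|_{\mathcal H_{k,\nu}^l}$ is the tight coincidence that makes the single combination $\lambda_1 F_+\bar F_+-\lambda_2\bar F_+F_+$ simultaneously $E_-$-closed, orthogonal to both $F_+\mathcal M$ and $\bar F_+\mathcal M$, and injective on $\mathcal M_{k-2,\nu}^l$; all three properties require careful use of the anticommutator structure in \eqref{oddrel1}–\eqref{oddrel2}, and the injectivity in particular entails a delicate positivity check. Once these verifications are in hand, the remaining argument follows the pattern of the supersymmetric Fischer decompositions familiar from Clifford analysis.
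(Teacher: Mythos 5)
Your proof is correct in outline, and it takes a genuinely different route from the paper's. The paper proves the inclusion ``$\subseteq$'' \emph{constructively}: given $H_{k,\nu}^l \in \mathcal{H}_{k,\nu}^l$, it writes down explicit formulas for the four monogenic components, namely (in the paper's notation)
\[
M_{k-2,\nu}^l = \frac{F_-\bar F_- H_{k,\nu}^l}{\lambda_1\lambda_2(\lambda_1+\lambda_2+2)}, \quad
M_{k-1,\nu-1}^{l-1} = \frac{\bar F_- H_{k,\nu}^l - \lambda_1(\lambda_1+\lambda_2+2)\bar F_+ M_{k-2,\nu}^l}{\lambda_1+2},
\]
and so on, then verifies by direct computation with the anticommutators that each lies in the correct monogenic subspace and that they reassemble to $H_{k,\nu}^l$. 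Unitarity enters only to guarantee the denominators $\lambda_1,\lambda_2,\lambda_1+2,\lambda_2+2,\lambda_1+\lambda_2+2$ are nonzero. Your approach instead establishes the summands as an orthogonal direct \emph{subspace} via injectivity of $F_+$, $\bar F_+$ and the quadratic combination, and then proves spanning by showing that any $v \in \mathcal{H}_{k,\nu}^l$ orthogonal to all four summands must vanish, invoking induction on $k$. Both strategies yield the result; the paper's gives you the projection formulas for free, while yours is arguably more conceptual and closer to the abstract Fischer-decomposition pattern you cite at the end.

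The one place I'd press you for more detail is the spanning step, which you sketch as ``iterating the inductive decomposition and combining with the third orthogonality condition eventually forces $F_-v=\bar F_-v=0$.'' This is correct but not automatic, and it is really the heart of the argument in your approach. To make it precise: from $v \perp F_+\mathcal{M}_{k-1,\nu-1}^{l-1}$ and adjointness you get $\bar F_- v \perp \mathcal{M}_{k-1,\nu-1}^{l-1}$; apply the inductive hypothesis to $\bar F_- v \in \mathcal{H}_{k-1,\nu-1}^{l-1}$ and use $\bar F_-^2=0$ together with the positivity of the relevant $H\mp Z$ eigenvalues to kill the $F_+\mathcal{M}$ and quadratic components, leaving $\bar F_- v = \bar F_+ b$ for some $b \in \mathcal{M}_{k-2,\nu}^l$. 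Then the orthogonality to the fourth summand (which, after adjointness and $\{F_-,\bar F_-\}=-2E_-$, reduces to $(\lambda_1+\lambda_2)\,F_-\bar F_- v \perp \mathcal{M}_{k-2,\nu}^l$) forces $(H+Z)b \perp \mathcal{M}_{k-2,\nu}^l$, hence $b=0$, hence $\bar F_- v = 0$; the argument for $F_- v = 0$ is symmetric. Once this chain is written out carefully your proof is complete and self-contained. You may also want to observe explicitly that the quadratic combination is injective on $\mathcal{M}_{k-2,\nu}^l$ — it follows from positivity of $\lambda_1,\lambda_2,\lambda_1+\lambda_2+2$, but it deserves a line since that operator is not a single odd raising map.
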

			\begin{proof}
				Parts of the proof are similar to the proofs of~\cite[Proposition~1]{BDES} and of \cite[Proposition~5.7]{CDM}.
				
				Using the star relations in Theorem~\ref{u21} and the Hermitian structure $(\cdot|\cdot)$, the summands in the right-hand side are readily shown to be orthogonal. 
				
				By means of the commutation relations~\eqref{oddrel2} and~\eqref{relevenodd2}, it follows that each summand is in $\ker E_-$, proving the inclusion $\supseteq$. For instance, for $M\in \mathcal{M}_{k-2,\nu}^l$,
				\begin{align*}
					E_- ( \lambda_1 F_+\bar{F}_+ - \lambda_2 \bar{F}_+F_+) M 
					& = 
					( \lambda_1 (F_+E_- -F_-)\bar{F}_+ - \lambda_2 (\bar{F}_+E_- - \bar F_- )F_+) M  . \\
					& = 
					( -\lambda_1 \{ F_-,\bar{F}_+\}
					+ \lambda_2  \{\bar F_- ,F_+\}) M ,  \\
					& = 
					( -\lambda_1 (H+Z)
					+ \lambda_2  (H-Z)) M   .
				\end{align*}
				This vanishes because the action of $H-Z$ (resp.~$H+Z$) on $ \mathcal{M}_{k-2,\nu}^l$ is given by the eigenvalue $\lambda_1$ (resp.~$\lambda_2$).

				To prove the other inclusion $\subseteq$, let $ H_{k,\nu}^l \in \mathcal{H}_{k,\nu}^l$. 
				We will define four elements, corresponding to the four summands in the right-hand side, that sum to $ H_{k,\nu}^l$. 
				For each of these elements that we will define, the appropriate eigenvalues for $H$, $Z_1$ and $Z_2$ will follow by means of the action on $ H_{k,\nu}^l $
				and the commutation relations~\eqref{relevenodd}, \eqref{e:commZ2}, and \eqref{e:commZ1}.
				
				First, we define 
				\begin{equation}\label{e:M1}
					{M}_{k-2,\nu}^l  =  
					\begin{cases}
						0 & \text{if } F_-\bar F_- H_{k,\nu}^l =0, \\
						(F_-\bar F_- H_{k,\nu}^l)/(\lambda_1\lambda_2(\lambda_1+\lambda_2+2)) &\text{if } F_-\bar F_- H_{k,\nu}^l \neq0   
						.
					\end{cases}
				\end{equation}
				If $F_-\bar F_- H_{k,\nu}^l \neq 0$, then, by means of the relations $\{F_-,F_-\} = 0 = \{\bar F_- , \bar F_-\}$ and $\{F_-,\bar F_-\}  = -2E_- $, we immediately find that 
				$
				F_-\bar F_- H_{k,\nu}^l = - \bar F_-F_- H_{k,\nu}^l
				$ is killed by both $F_-$ and $\bar F_-$.
				Hence, it follows that $
				F_-\bar F_- H_{k,\nu}^l \in \mathcal{M}_{k-2,\nu}^l
				$. 
				Note that the denominator in the definition~\eqref{e:M1} is non-zero whenever $F_-\bar F_- H_{k,\nu}^l \neq0   $, and $c$ is chosen such that $K_c(\tau)$ is unitarisable. Indeed, for $0 \neq M\in \mathcal{M}_{k-2,\nu}^l$ with $k\geq 2$ and $0\leq l<n$ (these restrictions are satisfied in the case at hand, as otherwise $F_-\bar F_- H_{k,\nu}^l = 0$), we have $F_+ M \neq 0$ and, using the unitarity of $(K_c(\tau),(\cdot|\cdot))$, together with $F_+^* = \bar F_-$ and $\bar F_- M = 0$, we obtain
				\[
				0 < ( F_+ M \vert F_+M ) = ( \bar F_-F_+ M \vert M ) = ( \{F_+, \bar F_-\} M \vert M )= \lambda_1  ( M \vert M )\rlap{\,.}
				\]
				In a similar manner, for $0 \neq M\in \mathcal{M}_{k-2,\nu}^l$ with $k\geq 2$ and $0 <l\leq n$, using now  $\bar F_+ M \neq 0$, one obtains $\lambda_2 >0$.
				
				Next, if $\lambda_1(\lambda_1+\lambda_2+2) \bar{F}_+{M}_{k-2,\nu}^l=\bar F_- {H}_{k,\nu}^l$, then define ${M}_{k-1,\nu-1}^{l-1} =  0$, while otherwise define
				\begin{equation}\label{e:M2}
					{M}_{k-1,\nu-1}^{l-1} =   
					(\bar F_- {H}_{k,\nu}^l - \lambda_1(\lambda_1+\lambda_2+2) \bar{F}_+{M}_{k-2,\nu}^l)/(\lambda_1 +2) \rlap{\,.}
				\end{equation}
				Using a similar argument as above, the denominator is shown to be non-zero whenever $\lambda_1(\lambda_1+\lambda_2+2) \bar{F}_+{M}_{k-2,\nu}^l\neq\bar F_- {H}_{k,\nu}^l$,
				and $c$ is chosen such that $K_c(\tau)$ is unitarisable. We now show that $ {M}_{k-1,\nu-1}^{l-1} \in \mathcal{M}_{k-1,\nu-1}^{l-1}  $.
				From $\{\bar F_+ , \bar F_-\}= 0$ follows $\bar F_-{M}_{k-1,\nu-1}^{l-1} =0 $.
				Using $\{F_-,\bar F_+\} = H+Z$ and $F_- {M}_{k-2,\nu}^l = 0$, we have 
				\begin{align*}
					(\lambda_1 +2) F_-{M}_{k-1,\nu-1}^{l-1} 
					& =  F_-\bar F_-  {H}_{k,\nu}^l - \lambda_1(\lambda_1+\lambda_2+2)(H+Z) {M}_{k-2,\nu}^l 
					\\ & =  F_-\bar F_-  {H}_{k,\nu}^l - \lambda_1(\lambda_1+\lambda_2+2)\lambda_2 {M}_{k-2,\nu}^l \rlap{\,,}
				\end{align*}
				which vanishes by definition of ${M}_{k-2,\nu}^l$. 
				
				Next, if  $\lambda_2(\lambda_1+\lambda_2+2) F_+{M}_{k-2,\nu}^l=- F_- {H}_{k,\nu}^l$, then define ${M}_{k-1,\nu+1}^{l+1} =  0$, while otherwise define
				\begin{equation}\label{e:M3}
					{M}_{k-1,\nu+1}^{l+1} = (F_- {H}_{k,\nu}^l+\lambda_2 (\lambda_1+\lambda_2+2) F_+ {M}_{k-2,\nu}^l)/(\lambda_2 +2)  \rlap{\,.}    
				\end{equation}
				Using a similar argument as above, the denominator is shown to be non-zero whenever $\lambda_2(\lambda_1+\lambda_2+2) F_+{M}_{k-2,\nu}^l\neq- F_- {H}_{k,\nu}^l$,
				and $c$ is chosen such that $K_c(\tau)$ is unitarisable. We now show that $ {M}_{k-1,\nu+1}^{l+1} \in \mathcal{M}_{k-1,\nu+1}^{l+1}  $. 
				From $\{F_+,F_-\} = 0 $ follows $ F_-{M}_{k-1,\nu+1}^{l+1} = 0  $.
				Using $\{\bar F_-, F_+\} = H-Z$,
				\begin{align*}
					(\lambda_2 +2) \bar F_-{M}_{k-1,\nu+1}^{l+1}
					& = \bar F_-F_- {H}_{k,\nu}^l + \lambda_2 (\lambda_1+\lambda_2+2)(H-Z){M}_{k-2,\nu}^l
					\\ &= -F_-\bar F_- {H}_{k,\nu}^l + \lambda_2(\lambda_1+\lambda_2+2)\lambda_1{M}_{k-2,\nu}^l \rlap{\,,}
				\end{align*}
				which vanishes by definition of ${M}_{k-2,\nu}^l$. 
				
				Finally, defining 
				\begin{equation}\label{e:M4}
					{M}_{k,\nu}^l = {H}_{k,\nu}^l  - ( \lambda_1 F_+\bar{F}_+ - \lambda_2 \bar{F}_+F_+) {M}_{k-2,\nu}^l - F_+ {M}_{k-1,\nu-1}^{l-1} - \bar F_+{M}_{k-1,\nu+1}^{l+1} \rlap{\,,}    
				\end{equation}
				it is clear that $H_{k,\nu}^l$ can be written in terms of the summands~\eqref{e:M1}, \eqref{e:M2}, \eqref{e:M3}, \eqref{e:M4} as
				\[
				{H}_{k,\nu}^l = {M}_{k,\nu}^l + F_+ {M}_{k-1,\nu-1}^{l-1} +\bar{F}_+{M}_{k-1,\nu+1}^{l+1} + ( \lambda_1 F_+\bar{F}_+ - \lambda_2 \bar{F}_+F_+) {M}_{k-2,\nu}^l \rlap{\,.}
				\]
				We conclude the proof by showing that ${M}_{k,\nu}^l$ is in the kernel of both $F_-$ and $\bar{F}_-$:
				\begin{align*}
					F_- {M}_{k,\nu}^l 
					&= F_- {H}_{k,\nu}^l  + ( \lambda_1 F_+(H+Z) + \lambda_2 (H+Z)F_+) {M}_{k-2,\nu}^l  - (H+Z) {M}_{k-1,\nu+1}^{l+1} \rlap{\,,}
					\\ & = F_- {H}_{k,\nu}^l  + ( \lambda_1\lambda_2  + \lambda_2 (\lambda_2+2))F_+ {M}_{k-2,\nu}^l  - (\lambda_2 +1+2-1) {M}_{k-1,\nu+1}^{l+1} \rlap{\,,}
					\\ & = F_- {H}_{k,\nu}^l  + \lambda_2( \lambda_1  +\lambda_2+2)F_+ {M}_{k-2,\nu}^l  - (\lambda_2 +2) {M}_{k-1,\nu+1}^{l+1} \rlap{\,,}
				\end{align*}
				vanishes by definition of ${M}_{k-1,\nu+1}^{l+1}$, and
				\begin{align*}
					\bar F_- {M}_{k,\nu}^l 
					& =\bar F_- {H}_{k,\nu}^l  - ( \lambda_1 (H-Z)\bar F_+ + \lambda_2  \bar F_+(H-Z)) {M}_{k-2,\nu}^l  - (H-Z) {M}_{k-1,\nu-1}^{l-1} \rlap{\,,}
					\\ & = F_- {H}_{k,\nu}^l  - ( \lambda_1(\lambda_1+2)  + \lambda_2\lambda_1 ) \bar F_+ {M}_{k-2,\nu}^l  - (\lambda_1 +1+2-1) {M}_{k-1,\nu-1}^{l-1} \rlap{\,,}
					\\ & = F_- {H}_{k,\nu}^l  - \lambda_1( \lambda_1  +\lambda_2+2)\bar F_+ {M}_{k-2,\nu}^l  - (\lambda_1 +2) {M}_{k-1,\nu-1}^{l-1} \rlap{\,,}
				\end{align*}
				vanishes by definition of $ {M}_{k-1,\nu-1}^{l-1}$.
			\end{proof}

			Denoting by $\mathcal{R}$ the centralizer algebra of $\mathfrak{u}(2|1)$ inside $H_c(G,E)\otimes \mathcal{C}$, we arrive at the following:
			
			\begin{Cor}\label{cor:genSpinorDecomp}
				For real-valued parameters $c$ such that $K_c(\tau)$ is unitarisable, we have
				the following joint $(\mathcal{R},\mathfrak{u}(2|1))$-decomposition 
				\[
				K_c(\tau) = \bigoplus_{l=0}^{n}\bigoplus_{k\in \mathbb{Z}_{\geq 0}}\bigoplus_{\nu\in\mathscr{S}_c(\tau)_k^l} 
				\mathcal{M}_c(\tau)_{k,\nu}^l\otimes L_{\mathfrak{u}(2|1)}(k + n - N_c(\tau),\nu,l-\tfrac{n}{2}),
				\]
				where $L_{\mathfrak{u}(2|1)}(\lambda,\nu,\gamma)$ denotes the irreducible lowest weight representation  of $\mathfrak{u}(2|1)$ of $(H,Z_1,Z_2)$ lowest weight $(\lambda,\nu,\gamma)$.
			\end{Cor}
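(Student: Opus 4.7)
The plan is to assemble three ingredients: the Fischer-type decomposition of Proposition~\ref{p:MongDecomp}, the $\mathfrak{su}(1,1)$-lowest weight decomposition~\eqref{e:sl2decomp} tensored with $\mathbb{S}$, and the unitarity of $K_c(\tau)$ as a $\ast$-Hermitian $\mathfrak{u}(2|1)$-module.

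First, I would observe that every $v\in\mathcal{M}_c(\tau)_{k,\nu}^l$ is, by definition, annihilated by $E_-$, $F_-$ and $\bar{F}_-$, and a direct weight computation using the generators in Theorem~\ref{u21} shows that $v$ has $(H,Z_1,Z_2)$-weight $(k+n-N_c(\tau),\nu,l-\tfrac{n}{2})$, so $v$ is a lowest weight vector for $\mathfrak{u}(2|1)$. Unitarity then forces the cyclic submodule $U(\mathfrak{u}(2|1))\cdot v$ to be irreducible: any proper nontrivial submodule would admit a non-zero orthogonal complement under the positive-definite form $(\cdot\vert\cdot)$, contradicting cyclic generation by $v$. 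Hence $U(\mathfrak{u}(2|1))\cdot v\cong L_{\mathfrak{u}(2|1)}(k+n-N_c(\tau),\nu,l-\tfrac{n}{2})$.

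The next step is to show that these cyclic submodules collectively span all of $K_c(\tau)$. Tensoring~\eqref{e:sl2decomp} with $\mathbb{S}$, splitting by spinor degree via $\mathbb{S}=\bigoplus_l\mathbb{S}^l$ (the $Z_2$-eigenspace decomposition), and diagonalizing $Z_1$ via Proposition~\ref{e:ZProp}, one obtains
\[
K_c(\tau)=\bigoplus_{k\geq 0}\bigoplus_{l=0}^n\bigoplus_{\nu\in\mathscr{S}_c(\tau)_k^l}\mathcal{H}_c(\tau)_{k,\nu}^l\otimes L_{\mathfrak{su}(1,1)}(k+n-N_c(\tau)),
\]
where the inner tensor factor is spanned by $\{E_+^j:j\geq 0\}$ acting on a lowest weight vector. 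Iterating Proposition~\ref{p:MongDecomp} as a descending induction on $k$ (terminating at $k=0$ or $k=1$) expresses each $\mathcal{H}_c(\tau)_{k,\nu}^l$ as an orthogonal sum of $F_+^a\bar{F}_+^b$-translates of $\mathcal{M}_c(\tau)_{k',\nu'}^{l'}$ at smaller values of $k'$. Combined with the $E_+$-tower from $L_{\mathfrak{su}(1,1)}$, every element of $K_c(\tau)$ lies in a $U(\mathfrak{u}(2|1))$-submodule generated by an element of some $\mathcal{M}_c(\tau)_{k,\nu}^l$.

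Finally, since $\mathcal{R}=\Cent_{H_c(G,E)\otimes\mathcal{C}}(\mathfrak{u}(2|1))$ commutes with all raising and lowering operators and with $H,Z_1,Z_2$, it preserves the defining conditions of $\mathcal{M}_c(\tau)_{k,\nu}^l$, so $\mathcal{R}$ acts on each $\mathcal{M}$-space. Assembling the pieces yields the claimed $(\mathcal{R},\mathfrak{u}(2|1))$-bimodule decomposition with multiplicities $\dim\mathcal{M}_c(\tau)_{k,\nu}^l$. The main obstacle in carrying this out is verifying that the orthogonal decomposition of Proposition~\ref{p:MongDecomp}, when iterated and combined with the $E_+$-tower, matches exactly the weight-space combinatorics of the abstract irreducible lowest weight module $L_{\mathfrak{u}(2|1)}(k+n-N_c(\tau),\nu,l-\tfrac{n}{2})$; this is ultimately a PBW-type count for $\mathfrak{u}(2|1)$, but requires care because the non-vanishing of the scalars $\lambda_1,\lambda_2$ in Proposition~\ref{p:MongDecomp} must be ensured on the entire range of unitary parameters $c$, which is precisely where the hypothesis of unitarisability is used.
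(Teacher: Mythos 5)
Your proposal follows essentially the same route as the paper: refine the $\mathfrak{su}(1,1)$-decomposition \eqref{e:sl2decomp} tensored with $\mathbb{S}$ by diagonalising $Z_1$ (Proposition~\ref{e:ZProp}), then apply Proposition~\ref{p:MongDecomp} to assemble quadruples of $\mathfrak{u}(1,1)$-modules into single $\mathfrak{u}(2\vert 1)$-modules, with irreducibility of the constituents guaranteed by unitarity. One small point: Proposition~\ref{p:MongDecomp} already expresses each $\mathcal{H}_{k,\nu}^l$ directly in terms of $\mathcal{M}$-spaces in a single application (the only operators involved are $F_+$, $\bar F_+$ and $\lambda_1 F_+\bar F_+ - \lambda_2\bar F_+ F_+$, not higher powers $F_+^a\bar F_+^b$), so no descending induction on $k$ is actually needed.
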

			
			\begin{proof}
				By means of the $\mathfrak{su}(1,1)$-decomposition~\eqref{e:sl2decomp}, we have 
				\[
				K_c(\tau) = \bigoplus_{l=0}^{n} M_c(\tau) \otimes  \mathbb{S}^l = \bigoplus_{l=0}^{n} \bigoplus_{k\in\mathbb{Z}_{\geq 0}}\mathcal{H}_c(\tau)_{k}^l\otimes L_{\mathfrak{su}(1,1)}(k + n - N_c(\tau)).
				\]
				Denoting by $\mathcal{H}_c(\tau)_{k,\nu}^l$ the $\nu$-eigenspace of the operator $Z_1$ inside $\mathcal{H}_c(\tau)_{k}^l$, we refine the decomposition to 
				\[
				K_c(\tau) = \bigoplus_{l=0}^{n} \bigoplus_{k\in\mathbb{Z}_{\geq 0}} \bigoplus_{\nu \in \mathscr{S}_c(\tau)_k^l}\mathcal{H}_c(\tau)_{k,\nu}^l\otimes L_{\mathfrak{u}(1,1)}(k + n - N_c(\tau),\nu),
				\]
				where $L_{\mathfrak{u}(1,1)}(\mu,\nu)$ denotes the irreducible lowest weight module for the $\mathfrak{u}(1,1)$-algebra realised by $E_+,E_-,H,Z_1$, with 
				$(H,Z_1)$-lowest weight $(\mu,\nu)$. The claim now follows from Proposition \ref{p:MongDecomp}.
			\end{proof}
			
			\section{Example: dihedral groups in the spinor case}\label{Section:CliffordDihedral}
			

			We again consider $E=\bR^2$ with $G=I_2(m)$ a dihedral group, and use the same notation as in Section~\ref{Section:dihedral}. The Clifford algebra has two generators $e_1,e_2$ corresponding to the real basis $\xi,\eta$ of $E$. 
			As $n=1$, we omit the subscripts from the complex linear combinations~\eqref{fj}. Hence, $f$ and $f^{\dagger}$ satisfy the anticommutation relations $\{f,f\} = 0 = \{f^{\dagger},f^{\dagger}\}$ and $\{f,f^{\dagger}\}=1$. The spinor space $\mathbb{S} \cong \bigwedge f^{\dagger}$ is two-dimensional with basis $1,f^{\dagger} $, or $ff^{\dagger},f^{\dagger} $ when realised inside $\mathcal{C}$. This provides a matrix representation for $f$ and $f^{\dagger}$, in which $e_1$ and $e_2$ correspond to the first two Pauli matrices.
			
			Let $\mathcal{M}_c(\tau)_{k}^l$ denote the subspace of $K_c(\tau) = M_c(\tau) \otimes \mathbb{S}$ with elements of polynomial degree $k$ (eigenvalue for $H$), spinor degree $l$ (eigenvalue for $Z_2$), and that are in the kernel of both $F_-$ and $\bar F_-$ (and hence also of $E_-$). In the low-dimensional case $n=1$, we show that the third $\mathfrak{u}(2\vert 1)$-weight, being the eigenvalue for $Z_1$ or $Z$, is determined by the previous two weights. The $\mathfrak{u}(2\vert 1)$-representations occurring in the decomposition of $K_c(\tau)$ are all atypical. 
			To find the weights, computations similar to the ones in Section~\ref{Section:dihedral} can be used. Specifically in the case $n=1$, another approach is also possible. 
			
			\begin{Propo}\label{Prop41}
				For $M_{k}^l\in \mathcal{M}_c(\tau)_{k}^l$, we have the following actions
				\begin{align*}
					H \, M_{k}^l & =  \left(k + 1 - N_c(\tau) \right)M_{k}^l\rlap{\,,}\\
					Z_2\, M_{k}^l & =   (-1)^{l+1} \frac12 \,M_{k}^l\rlap{\,,}\\
					Z_1\, M_{k}^l & =   (-1)^{l}\left(k  - N_c(\tau)\right) M_{k}^l\rlap{\,,}\\
					Z\, M_{k}^l & =   (-1)^{l+1}\left(k +1 - N_c(\tau)\right) M_{k}^l\rlap{\,.}
				\end{align*}
			\end{Propo}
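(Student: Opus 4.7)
The $H$- and $Z_2$-eigenvalues are essentially free. For $H$, the statement of Proposition~\ref{u11} with $N=2n=2$ gives that $H$ acts on $M_c(\tau)_k$ by $k + 1 - N_c(\tau)$, and this scalar descends to the subspace $\mathcal{M}_c(\tau)_k^l\subset \mathcal{H}_c(\tau)_k\otimes\mathbb{S}^l$. For $Z_2 = f^\dagger f - \tfrac12$: since $n=1$ the space $\mathbb{S}^l$ has dimension one for $l\in\{0,1\}$, and on $\mathbb{S}^l$ the element $f^\dagger f$ acts by $l$, so $Z_2$ acts by $l - \tfrac12 = (-1)^{l+1}\tfrac12$.

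The content of the proposition is the $Z$-eigenvalue (from which $Z_1 = 2Z_2 - Z$ follows immediately). The key observation, valid only because $n=1$, is that one of $F_+$, $\bar F_+$ already vanishes on $\mathcal{H}_c(\tau)_k\otimes\mathbb{S}^l$ depending on $l$: since $F_+ = \bar z\otimes f^\dagger$ and $\bar F_+ = z\otimes f$ with $(f^\dagger)^2 = f^2 = 0$, one has $F_+ \cdot (\,\cdot\,\otimes \mathbb{S}^1) = 0$ (because $f^\dagger$ kills $\mathbb{S}^1$) and $\bar F_+ \cdot (\,\cdot\,\otimes \mathbb{S}^0) = 0$ (because $f$ kills $\mathbb{S}^0 = \operatorname{span}(I)$).

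Given $M_k^l\in \mathcal{M}_c(\tau)_k^l$, i.e.\ $F_- M_k^l = \bar F_- M_k^l = 0$, I apply the anti-commutation relations $\{F_+,\bar F_-\} = H - Z$ and $\{F_-,\bar F_+\} = H + Z$ from \eqref{oddrel2}. For $l=0$, $\bar F_+ M_k^0 = 0$ by the observation above, so $\{F_-,\bar F_+\}M_k^0 = 0$ gives $(H+Z)M_k^0 = 0$, hence $Z M_k^0 = -(k+1 - N_c(\tau))M_k^0 = (-1)^{0+1}(k+1 - N_c(\tau))M_k^0$. For $l=1$, symmetrically $F_+ M_k^1 = 0$, so $\{F_+,\bar F_-\}M_k^1 = 0$ yields $(H-Z)M_k^1 = 0$, i.e.\ $Z M_k^1 = (k+1-N_c(\tau))M_k^1 = (-1)^{1+1}(k+1-N_c(\tau))M_k^1$. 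Both cases combine into $Z M_k^l = (-1)^{l+1}(k+1-N_c(\tau))M_k^l$, and then $Z_1 M_k^l = 2Z_2 M_k^l - Z M_k^l = \bigl((-1)^{l+1} - (-1)^{l+1}(k+1-N_c(\tau))\bigr)M_k^l = (-1)^l(k-N_c(\tau))M_k^l$, as claimed.

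There is no real obstacle: the argument is a two-line calculation once the two vanishings $F_+|_{\mathbb{S}^1}=0$ and $\bar F_+|_{\mathbb{S}^0}=0$ are noted, and the only mild subtlety to flag is that this dimensional collapse of $\mathbb{S}$ is special to $n=1$ and is precisely what forces every $\mathfrak{u}(2|1)$-representation appearing in the decomposition to be atypical (one of the anti-commutators $\{F_\pm,\bar F_\mp\}$ annihilates the lowest-weight vector).
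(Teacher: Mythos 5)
Your proof is correct, and it takes a genuinely different route from the paper's own argument, so a brief comparison is warranted.

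You derive the $Z$-eigenvalue directly from the defining anti-commutation relations $\{F_\pm,\bar F_\mp\}=H\mp Z$ of Theorem~\ref{u21}, combined with the observation that in rank $n=1$ one of the two raising odd elements already vanishes on the relevant spinor degree: $\bar F_+ = z\otimes f$ annihilates $M_c(\tau)\otimes\mathbb{S}^0$ and $F_+ = \bar z\otimes f^\dagger$ annihilates $M_c(\tau)\otimes\mathbb{S}^1$. Since $M_k^l$ also lies in $\ker F_-\cap\ker\bar F_-$, the relevant anti-commutator $\{F_-,\bar F_+\}=H+Z$ (resp.\ $\{F_+,\bar F_-\}=H-Z$) kills $M_k^0$ (resp.\ $M_k^1$), which pins down $Z$ and hence $Z_1=2Z_2-Z$. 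This is a clean bookkeeping argument using only the superalgebra relations already established and elementary Clifford-module facts. The paper instead writes the $\mathfrak{u}(2|1)$ central element as $Z_1-Z_2=iO_{1,2}$ and invokes its expression in terms of the Scasimir of the $\mathfrak{spo}(2|1)$-subalgebra $\lpi D_\pm\rpi$ and the pseudoscalar $e_1e_2$, namely $Z_1 = -(D_-D_+-D_+D_- -2)Z_2$; then $D_-M_k^l=0$ gives $D_-D_+ - D_+D_-$ acting by $2H$ and the eigenvalue follows. Both arguments crucially exploit $n=1$ (your vanishing of a raising operator, the paper's use of the single $O_{1,2}$), and both yield the atypicality you flag. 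Your version is shorter and stays entirely within the $\mathfrak{u}(2|1)$ relations; the paper's version is chosen to highlight the structural link to the angular-momentum operators $O_{jk}$ of~\eqref{Ojk} and the ghost centre of $\mathfrak{osp}(1|2)$, which is thematically relevant to the rest of the section. Either approach is a complete proof.
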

			\begin{proof}
				The first two actions follow immediately from Lemma~\ref{Lemma1.4} and the Clifford algebra anticommutation relations~\eqref{ff} as $Z_2 = f^{\dagger} f - 1/2$. 
				
				For the final action, note that the $\mathfrak{u}(2\vert1)$ central element can be written as $Z_1-Z_2 = i O_{1,2}$, as defined in~\eqref{Ojk}. For $E=\bR^2$, this is related~\cite[p.13]{DOV} to the Scasimir element of the $\mathfrak{spo}(2\vert1)$ subalgebra generated by $D_+ = F_+ + \bar F_+ $ and $ D_- = F_- + \bar F_- $:
				\[
				i O_{1,2} =  i \frac12( D_-D_+ - D_+D_- - 1) \, e_1 e_2 = -( D_-D_+ - D_+D_- - 1) Z_2\rlap{\,,}
				\]
				where we used $i\,e_1e_2=(1-2f^{\dagger}f) $. Both the Scasimir and the pseudoscalar $e_1e_2$ are in the ghost centre of $\mathfrak{spo}(2\vert1)$, their product is in the centre and squares, up to a constant, to the $\mathfrak{spo}(2\vert1)$ Casimir. 
				In a manner similar to, for instance, Section 5.2 of \cite{DOV2}, we obtain
				\begin{align*}
					Z_1\, M_{k}^l 
					& =   -(D_-D_+ - D_+D_- -2) Z_2 M_{k}^l\rlap{\,,}\\
					& =   -(2H-2) (-1)^{l+1} \frac12 \, M_{k}^l\rlap{\,,}\\
					& =   (-1)^{l} \left(k  - N_c(\tau) \right)  M_{k}^l\rlap{\,.}
				\end{align*}
				The action of $Z= 2\,Z_2 -Z_1$ now follows from the computed actions. 
			\end{proof}
			
			We will briefly discuss how this result relates to the ones of Section~\ref{Section:dihedral}, namely Theorem~\ref{t:eigenvaluesdihedral} and the subsequent Corollaries, and use these to give a basis for $ \mathcal{M}_c(\tau)_{k}^l$. 
			
			As $\mathbb{S} = \mathbb{S}^0 \oplus \mathbb{S}^1$, we have  
			\begin{equation*}
				K_c(\tau) =  M_c(\tau) \otimes  \mathbb{S}^0 \oplus  M_c(\tau) \otimes \mathbb{S}^1.
			\end{equation*}
			When $n=1$, using the explicit expressions for the root vectors~\eqref{rootsdihedral}, we can write 
			\[
			Z_1 =  \bar  z\bar \zeta  -z \zeta +    2 \sigma Z_2   \rlap{\,.}
			\]
			As $2Z_2 \mathbb{S}^l = (-1)^{l+1} \mathbb{S}^l$, there is a  $\mathfrak{u}(1,1)$-subalgebra of $\mathfrak{u}(2\vert1)$ generated by $H,E_{\pm}$
			and $Z_1$ which will act on the space
			$ M_c(\tau) \otimes  \mathbb{S}^l $
			as a copy of the algebra $\mathfrak{u}(1,1)$ considered in Part I with central element 
			\[
			Z_1 |_{M_c(\tau) \otimes \mathbb{S}^l} = Z_0 + (-1)^{l+1} \sigma\rlap{\,.}
			\] 
			
			From the expressions~\eqref{e:eigenvaluesdihedral} in Theorem~\ref{t:eigenvaluesdihedral} for the eigenvalues of $Z_0 + \epsilon \sigma$, we see that they simplify to $\pm(k- N_c(\tau))$ for $\epsilon^2 = 1$. 
			The $\mathfrak{u}(1,1)$-decompositions in the corollaries following  Theorem~\ref{t:eigenvaluesdihedral} can be used to write
			\begin{equation*}
				K_c(\tau) =  \bigoplus_{l= 0}^1\bigoplus_{k\in \mathbb{Z}} \mathcal{H}_c(\tau)_{|k|,\sign(k)}\otimes L_{\mathfrak{u}(1,1)}(|k| + 1 - N_c(\tau),k- \sign(k) N_c(\tau))\otimes \mathbb{S}^l\rlap{\,,}
			\end{equation*}
			where $\mathcal{H}_c(\tau)_{|k|,\pm}$ denotes the $\pm(|k|- N_c(\tau))$-eigenspace of $Z_0+  (-1)^{l+1}\sigma$ in $\ker E_-$. 
			This can now be refined to a $\mathfrak{u}(2|1)$-decomposition by combining pairs of $\mathfrak{u}(1,1)$-modules into a single (atypical) representation of $\mathfrak{u}(2|1)$.
			
			For $l=0$, we have $(-1)^{l+1} = -1$ and hence for $k\in \mathbb{Z}_{>0}$, the eigenvectors~\eqref{e:eigenvectorsdihedral} simplify to
			\begin{align*}
				h(-,\sigma_{\tau,k})^{+} & = \Proj_k\left[ 2( k - N_c(\tau)  \bar z^k \otimes \bar  z(\sigma_{\tau,k}) - 2\sigma_{\tau,k}  z^k \otimes z(\sigma_{\tau,k})   \right] \rlap{\,,}\\
				h(-,\sigma_{\tau,k})^{-} & =  2( k - N_c(\tau) ) \Proj_k\left[ z^k \otimes  z(\sigma_{\tau,k})\right]\rlap{\,,}
			\end{align*}
			where $\sigma_{\tau,k} \in \sigma(\tau)_k$. 
			From the eigenvalues in Proposition~\ref{Prop41}, we observe that the element(s) $h(-,\sigma_{\tau,k})^{+} \otimes 1 \in \mathcal{H}_c(\tau)_{k,+}\otimes \mathbb{S}^0$ for $\sigma_{\tau,k} \in \sigma(\tau)_k$ form(s)
			a basis for $ \mathcal{M}_c(\tau)_{k}^0$. This basis is killed by both $\bar F_-$ (this is immediate from the action of $f$ on $\mathbb{S}^0$) and $F_-$, which is non-trivial and implies $h(-,\sigma_{\tau,k})^{+}\in\ker\zeta$ as was mentioned in Remark~\ref{r:harmDunkl}. The kernel of $\bar\zeta$ is treated in the next paragraph. 
			
			Similarly, for $l=1$, we have $(-1)^{l+1} = 1$ and hence for $k\in \mathbb{Z}_{>0}$, the eigenvectors~\eqref{e:eigenvectorsdihedral} simplify to
			\begin{align*}
				h(+,\sigma_{\tau,k})^{+} & = 2( k - N_c(\tau) ) \Proj_k\left[\bar  z^k \otimes  \bar z(\sigma_{\tau,k})  \right]\rlap{\,,}\\
				h(+,\sigma_{\tau,k})^{-} & = \Proj_k\left[ 2( k - N_c(\tau)) z^k \otimes  z(\sigma_{\tau,k}) - 2\sigma_{\tau,k} \bar z^k \otimes \bar  z(\sigma_{\tau,k}) \right] \rlap{\,,}
			\end{align*}
			where $\sigma_{\tau,k} \in \sigma(\tau)_k$.
			Again by Proposition~\ref{Prop41}, a basis for $ \mathcal{M}_c(\tau)_{k}^1$ is given by $h(+,\sigma_{\tau,k})^{-} \otimes f^{\dagger}\in \mathcal{H}_c(\tau)_{k,-}\otimes \mathbb{S}^1 $ for $\sigma_{\tau,k} \in \sigma(\tau)_k$. This basis is killed by both $F_-$ (this is immediate from the action of $f^{\dagger}$ on $\mathbb{S}^1$) and $\bar F_-$, which is non-trivial and implies $h(+,\sigma_{\tau,k})^{-}\in\ker\bar\zeta$.
			
			Next, from the eigenvalues in Proposition~\ref{Prop41}, we also have $F_- h(-,\sigma_{\tau,k})^{-} \otimes 1  \neq 0$ for $k\in \mathbb{Z}_{>0}$ and $\sigma_{\tau,k} \in \sigma(\tau)_k$. Since
			$F_- h(-,\sigma_{\tau,k})^{-} \otimes 1  $ is killed by both $F_-$ and $\bar F_-$,
			and moreover, using the commutation relations~\eqref{e:commZ1}, we have 
			\[
			Z_1 F_- h(-,\sigma_{\tau,k})^{-}  \otimes 1 
			=  -(k-1- N_c(\tau))F_- h(-,\sigma_{\tau,k})^{-} \otimes 1 \rlap{\,.}
			\]
			Thus, $F_- h(-,\sigma_{\tau,k})^{-} \otimes 1  $ is an element of $\mathcal{M}_c(\tau)_{k-1}^1$, and hence a linear combination of $h(+,\sigma_{\tau,k})^{-} \otimes f^{\dagger} $ for $\sigma_{\tau,k} \in \sigma(\tau)_k$ when $k>1$, or an element of the 
			space $1\otimes V(\tau) \otimes f^{\dagger} $ when $k=1$.
			In a similar manner, $\bar F_- h(+,\sigma_{\tau,k})^{+} \otimes f^{\dagger} $ is seen to be in $\mathcal{M}_c(\tau)_{k-1}^0$.
			
			In this way, the decomposition is refined to 
			\[
			K_c(\tau) = \bigoplus_{l=0}^1\bigoplus_{k\in \mathbb{Z}_{\geq0}} \mathcal{M}_c(\tau)_{k}^l \otimes L_{\mathfrak{u}(2\vert1)}(k + 1 - N_c(\tau),(-1)^{l}\left(k  - N_c(\tau)\right),l-\tfrac{1}{2}).
			\]
			Here, $\mathcal{M}_c(\tau)_{k}^l$ denotes the subspace with elements of polynomial degree $k$, spinor degree $l$, and that are in the kernel of both $F_-$ and $\bar F_-$, with a basis given explicitly in the preceding discussion. It is an irreducible module for the centraliser algebra $\mathrm{Cent}_{H_c(G,E) \otimes \mathcal{C}}(\mathfrak{u}(2\vert1))$ only when $\tau$ is one-dimensional. Note that in the present dihedral case, the eigenvalue of $Z_1$ is completely determined by the eigenvalues of $H$ and $Z_2$, and thus we have omitted the $Z_1$-eigenvalue $\nu$ from the notation in $\mathcal{M}_c(\tau)_{k,\nu}^l$, when compared with the general decomposition in Corollary \ref{cor:genSpinorDecomp}.

			\begin{Propo}
				The centraliser $(\mathrm{Cent}_{H_c(G,E) \otimes \mathcal{C}}(\mathfrak{u}(2\vert1))$ is generated by the central element $Z_1 - Z_2$ and $\rho((G^J)^{\widetilde{}}\,)$, where $G^J$ is abelian for $G$ dihedral.
			\end{Propo}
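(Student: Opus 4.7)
The abelianness of $G^J$ is clear: any element of $\mathrm{O}(E)$ commuting with an orthogonal complex structure $J$ has determinant $1$, so $G^J \subseteq G \cap \mathrm{SO}(E)$, and for $G = I_2(m)$ the orientation-preserving subgroup is the cyclic rotation group $C_m$. The inclusion $\mathcal{R}' := \langle Z_1 - Z_2, \mathbb{C}[\rho(G^J)] \rangle \subseteq \mathcal{R} := \Cent_{H_c(G,E) \otimes \mathcal{C}}(\mathfrak{u}(2|1))$ follows from the centrality of $Z_1 - Z_2$ in $\mathfrak{u}(2|1)$ (Theorem~\ref{u21}) and from Proposition~\ref{rhoGJ}.

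The reverse inclusion is obtained by adapting the strategy of Proposition~\ref{p:cent} to the Clifford setting. Any $Y \in \mathcal{R}$ commutes with $H$, $E_\pm$ and $Z_2$; since the $\mathfrak{su}(1,1)$-generators lie in the $H_c(G,E)$-factor, this forces $Y \in \mathcal{U} \otimes \mathcal{C}_0$, where $\mathcal{U} = \Cent_{H_c(G,E)}(\mathfrak{su}(1,1))$ and $\mathcal{C}_0$ is the degree-zero part of the Clifford $\mathbb{Z}$-grading. In the $n=1$ case $\mathcal{U}$ is the polynomial algebra in $Z_0$ with coefficients in $\mathbb{C}[G]$, while $\mathcal{C}_0$ is two-dimensional, spanned by the orthogonal idempotents $p_+ = f^\dagger f$ and $p_- = 1 - f^\dagger f$. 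Thus every element of $\mathcal{R}$ has the form $Y = A_+ p_+ + A_- p_-$ with $A_\pm \in \mathbb{C}[G][Z_0]$.

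The remaining commutations $[Y, F_\pm] = [Y, \bar F_\pm] = 0$, simplified using $p_+ f^\dagger = f^\dagger$, $p_- f^\dagger = 0$ and the analogues for $f$, reduce to the four intertwining relations $A_+ \bar z = \bar z A_-$, $A_+ \zeta = \zeta A_-$, $A_- z = z A_+$ and $A_- \bar \zeta = \bar \zeta A_+$. Expanding each $A_\pm$ in the $\mathbb{C}[G]$-basis and using Lemma~\ref{l:dihprelim}(a), any term supported on a reflection $s$ would force $\bar z$ to equal a scalar multiple of $z$, which is incompatible with the polynomial degree structure of the equations. Hence the $\mathbb{C}[G]$-supports of $A_\pm$ must be contained in $G^J = C_m$, and the relations determine $A_-$ from $A_+$ via the character twist $A_- = \sum_g \omega^{-j(g)} c_g(Z_0)\, g$ whenever $A_+ = \sum_g c_g(Z_0)\, g$, where $g \cdot \bar z = \omega^{-j(g)} \bar z$.

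It then remains to verify that every such pair $(A_+, A_-)$ arises from $\mathcal{R}'$. For $g = s_i s_j \in C_m$ a rotation by angle $\theta_g$, the Clifford part of $\rho(g)$ equals the product of unit root vectors $\alpha_{s_i} \alpha_{s_j} = \cos(\theta_g/2) + \sin(\theta_g/2)\, e_1 e_2 = e^{i\theta_g/2} p_+ + e^{-i\theta_g/2} p_-$, so $\rho(g) = e^{i\theta_g/2}\, g \otimes p_+ + e^{-i\theta_g/2}\, g \otimes p_-$. The ratio $e^{-i\theta_g}$ of its $p_-$ and $p_+$ coefficients coincides with the twist $\omega^{-j(g)}$ required above (the Pin-group half-angle matches the contragredient $C_m$-action on $\bar z$), so $\mathbb{C}[\rho(G^J)]$ realises the $\mathbb{C}[G^J]$-parts of all admissible $Y$. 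The residual $Z_0$-dependence is absorbed into polynomials in $Z_1 - Z_2$: in $n=1$ one has $Z_1 - Z_2 = Z_0 + (2\sigma - 1)Z_2$, so on each $p_\pm$-block $Z_1 - Z_2$ differs from $Z_0$ only by central scalars in $\mathbb{C}[G]$ (via $\sigma$), allowing all $Z_0$-powers to be recovered from $\mathcal{R}'$ by an induction on degree. The main obstacle is precisely this last bookkeeping step, in which one must reconcile the Pin-group phase appearing in $\rho(g)$ with the twist dictated by the intertwining, and track the $\sigma$-corrections relating $Z_0|_{p_\pm}$ to $Z_1 - Z_2$.
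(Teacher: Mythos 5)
Your proposal takes a route fundamentally different from the paper's. The paper's proof is essentially one line: it invokes the result (attributed to the reference \cite{DOV}, and recalled just before formula \eqref{Ojk}) that the supercentraliser of the $\mathfrak{spo}(2|1)$-subalgebra generated by $D_\pm = F_\pm + \bar F_\pm$ is generated by $O_{12} = -i(Z_1 - Z_2)$ and the double cover $\{\pm\rho(s)\}$ of $G$, and then adapts the restriction step of Proposition~\ref{p:cent} using Proposition~\ref{rhoGJ}. You instead attempt a from-scratch derivation: reduce to $\mathcal{U}\otimes\mathcal{C}_0$, decompose via the idempotents $p_\pm$, and derive the four intertwining relations. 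That setup is correct and is a legitimate alternative starting point, and the easy inclusion $\mathcal{R}'\subseteq\mathcal{R}$ is handled properly.

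However, the hard inclusion is not actually established, and there are two concrete gaps. First, the ``group support'' argument is not rigorous. You write $A_\pm$ as $\sum_g a^\pm_g(Z_0)\,g$ and treat the intertwining relation $A_+\bar z = \bar z A_-$ as if it splits cleanly by group component. But on the right-hand side one must push $\bar z$ past $a^-_g(Z_0)$, and by Proposition~\ref{p:adZ0} (or Lemma~\ref{l:dihprelim}(e)--(f)) the commutator $[Z_0,\bar z]$ involves $\sigma(0)$ and $\sigma(\pm 1)$, which are supported on reflections. So $\bar z A_-$ does \emph{not} have the same group support as $A_-$, the putative $g$-by-$g$ matching fails, and the phrase ``would force $\bar z$ to equal a scalar multiple of $z$'' is not a proof. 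Second, you yourself flag the final step --- reconciling the Pin-group phases in $\rho(g)$ with the $\sigma$-corrections relating $Z_0|_{p_\pm}$ to $Z_1 - Z_2$, and using this to show that every admissible pair $(A_+,A_-)$ actually lies in $\langle Z_1 - Z_2,\,\mathbb{C}[\rho(G^J)]\rangle$ --- as ``the main obstacle,'' and you do not carry it out; note also that $\sigma$ by itself does \emph{not} commute with $Z_0$ (since $sZ_0 = -Z_0 s$ for a reflection $s$), so the claimed absorption of $Z_0$-powers ``by an induction on degree'' is far from automatic. The paper's route via the known $\mathfrak{spo}(2|1)$-supercentraliser avoids both of these difficulties at once, which is why it is preferred.
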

			\begin{proof}
				Similar to Proposition~\ref{p:cent}, using now that the (super)centraliser of the $\mathfrak{spo}(2\vert1)$-subalgebra is generated by $O_{12} = -i(Z_1-Z_2)$ and the double cover of $G$.
			\end{proof}


			
			
			\section*{Acknowledgements}
			
			DC and MDM were supported in part by the Engineering and Physical Sciences Research Council grant [EP/N033922/1] (2016). HDB and RO were supported in part by EOS Research Project number 30889451.
			RO was supported by a postdoctoral fellowship, fundamental research, of the Research Foundation -- Flanders (FWO), number 12Z9920N

			


\begin{thebibliography}{99}
				
				\bibitem{ABPTV}
				J. Adams, D. Barbasch, A. Paul, P. Trapa, D.A. Vogan, Jr.,
				\newblock Unitary Shimura correspondences for split real groups,
				\newblock J. Amer. Math. Soc. 20 no. 3, 701--751 (2007)
				
				\bibitem{BSO}
				S.~Ben~Sa\"{i}d, B.~\O rsted,
				\newblock Segal--Bargmann transforms associated with Coxeter groups,
				\newblock Math. Ann., Springer Verlag {334}, 281--323 (2006) 
				
				\bibitem{BDES}
				F. Brackx, H. De Schepper, D. Eelbode, V. Sou\v cek, 
				\newblock The Howe dual pair in Hermitean Clifford analysis. 
				\newblock Rev. Mat. Iberoam. {26}, 449--479 (2010) 
				
				\bibitem{Br}
				J.P. Brewer, 
				\newblock Complex Reflection Subgroups of Real Reflection Groups. 
				\newblock J. Algebra {232}, 310--330 (2000)
				
				\bibitem{CW}
				S.J.~Cheng, W.~Wang,
				\newblock Dualities and representations of {L}ie superalgebras,
				\newblock Grad. Stud. Math. 144, Amer. Math. Soc., Providence, RI (2012)
				
				\bibitem{CDM}
				D. Ciubotaru, M. De Martino, 
				\newblock The Dunkl--Cherednik Deformation of a Howe duality. 
				\newblock J. Algebra {560}, 914--959 (2020)
				
				\bibitem{DOV}
				H. De Bie, R. Oste, J. Van der Jeugt,
				\newblock On the algebra of symmetries of Laplace and Dirac operators,
				\newblock Lett. Math. Phys. {108}, 1905--1953 (2018)
				
				\bibitem{DOV2}
				H. De Bie, R. Oste, J. Van der Jeugt,
				\newblock The total angular momentum algebra related to the $\mathrm{S}_3$ Dunkl Dirac equation,
				\newblock Ann. Physics {389}, 192--218  (2018)
				
				\bibitem{Du}
				C. Dunkl,
				\newblock Differential-difference operators associated to reflection groups. 
				\newblock Trans. Amer. Math. Soc. {311}, 167–-183 (1989)
				
				\bibitem{DO}
				C. Dunkl, E. Opdam,
				\newblock Dunkl operators for complex reflection groups. 
				\newblock Proc. Lond. Math. Soc. (3) 86, 70–-108 (2003)
				
				\bibitem{EtGi}	
				P. Etingof, V. Ginzburg, 
				\newblock Symplectic reflection algebras, Calogero--Moser space, and deformed Harish-Chandra homomorphism. 
				\newblock Invent. math. 147, 243--348 (2002)
				
				
				\bibitem{FH}
				M.~Feigin, T.~Hakobyan,
				\newblock On Dunkl angular momenta algebra. 
				\newblock J. High Energy Phys. 2015, 107 (2015)
				
				\bibitem{G}
				S. Gelbart,
				\newblock Examples of dual reductive pairs,
				\newblock Proc. Sympos. Pure Math. 33, Amer. Math. Soc., Providence, RI, 287--296 (1979)
				
				\bibitem{GGOR}
				V.~Ginzburg, N.~Guay, E.~Opdam, R.~Rouquier,
				\newblock On the category $\mathcal{O}$ for rational Cherednik algebras,
				\newblock Invent. Math. {154}, 617--651 (2003)
				
				\bibitem{He}
				G.J.~Heckman,
				\newblock A Remark on the Dunkl Differential-Difference Operators,
				\newblock 
				Harmonic Analysis on Reductive Groups. 
				\newblock Progr. Math. 101, Birkh\"auser, Boston, MA, (1991)
				
				\bibitem{Ho1}
				R. Howe,
				\newblock Remarks on classical invariant theory,
				\newblock Trans. Amer. Math. Soc. {313}, 539--570 (1989)
				
				\bibitem{Ho2}
				R. Howe,
				\newblock $\theta$-series and invariant theory,
				\newblock Proc. Sympos. Pure Math. 33, Amer. Math. Soc., Providence, RI,  275--285 (1979)
				
				\bibitem{Ho3}
				R. Howe,
				\newblock Transcending Classical Invariant Theory,
				\newblock J. Amer. Math. Soc. {313}, 535--552 (1989)
				
				\bibitem{Ho4}
				R.~Howe,
				\newblock Perspectives on invariant theory: Schur duality, multiplicity-free actions and beyond.
				\newblock  The Schur lectures (1992) Tel Aviv, 1--182 (1995)
				
				
				\bibitem{Ka}
				V.G.~Kac, 
				\newblock Lie superalgebras. 
				\newblock Adv. Math. {26}, 8--96 (1977)
				
				\bibitem{Ko}
				T.H. Koornwinder, 
				\newblock The addition formula for Jacobi polynomials, II, III, 
				\newblock Math. Centrum Amsterdam Reports TW 133, 135 (1972).
				
				\bibitem{Ku}
				S.S. Kudla,
				\newblock On the local theta-correspondence.
				\newblock Invent. Math. {83}, 229--255 (1986)
				
				\bibitem{Ku2}
				S.S. Kudla,
				\newblock Seesaw dual reductive pairs. 
				\newblock Automorphic forms of several variables.
				\newblock Progr. Math. 46, Birkh\"auser Boston, 244–-268 (1984)
				
				
				\bibitem{Li}
				J.-S. Li, 
				\newblock Singular unitary representations of classical groups. 
				\newblock Invent. Math. 97, no. 2, 237–-255 (1989)
				
				\bibitem{MVW}
				C. M\oe glin, M.-F. Vign\'{e}ras, J.-L. Waldspurger,
				\newblock Correspondances de {H}owe sur un corps {$p$}-adique.
				\newblock Lecture Notes in Math. 1291, Springer-Verlag, Berlin (1987)
				
				\bibitem{Morris}
				A.O. Morris, 
				\newblock Projective Representations of Reflection Groups. 
				\newblock Proc. Lond. Math. Soc. 3.3,  403–-420 (1976)
				
				
				\bibitem{OSS}
				B. \O rsted, P. Somberg, V. Sou\v cek,
				\newblock The Howe Duality for the Dunkl Version of the Dirac Operator.
				\newblock Adv. Appl. Clifford Algebr. {19}, 403--415 (2009)
				
				\bibitem{Pa}
				M.~Parker,
				\newblock Classification of real simple Lie superalgebras of classical type.
				\newblock J. Math. Phys. {21}, 689--697 (1980)
				
				\bibitem{W}
				J.-L. Waldspurger,
				\newblock D\'emonstration d'une conjecture de dualit\'e de Howe dans le cas {$p$}-adique.
				\newblock Festschrift in honor of I. Piatetski-Shapiro. 
				\newblock Israel Math. Conf. Proc. vol. 2, 267–-324 (1990)
			\end{thebibliography}
		\end{document}